\renewcommand{\setminus}{{\smallsetminus}}
\newtheorem{theorem}{Theorem}[section]
\newtheorem{lemma}[theorem]{Lemma}
\newtheorem{proposition}[theorem]{Proposition}
\newtheorem{corollary}[theorem]{Corollary}
\newtheorem{conjecture}[theorem]{Conjecture}
\theoremstyle{remark}
\theoremstyle{remark}
\numberwithin{equation}{section}
\newcommand{\lt}{\left}
\newcommand{\rt}{\right}
\def \CC{\mathbb C}
\def\CC{{\mathbb C}}
\def\RR{{\mathbb R}}
\def\ZZ{{\mathbb Z}}
\def \det{\operatorname{det}}
\title{Generalized Bonahon-Wong-Yang volume conjecture of\\ quantum invariants of surface diffeomorphisms I: \\the figure eight knot complement}
\author{Tushar Pandey and Ka Ho Wong}
\date{}
\begin{document}

\maketitle

\begin{abstract}
We propose a generalization of the Bonahon-Wong-Yang volume conjecture of quantum invariants of surface diffeomorphisms, by relating the asymptotics of the invariants with certain hyperbolic cone structure on the mapping torus determined by the choice of the invariant puncture weights. We prove the conjecture for the once-punctured torus bundle with the diffeomorphism given by the word $LR$.
\end{abstract}

\tableofcontents

\section{Introduction}
In \cite{BWYI}, Bonahon-Wong-Yang define a sequence of real-valued invariants for self-diffeomorphisms of surfaces by using the representation theory of the Kauffman bracket skein algebras. To be precise, for each positive odd integer $n\geq 3$, consider the primitive root of unity $A=e^{\pi\sqrt{-1}/n}$. Let $\Sigma=\Sigma_{g,p}$ be an oriented surface with $g$ genus and $p\geq 1$ punctures. Let $\mathcal{K}^q(\Sigma)$ be the Kauffman bracket skein algebra of $\Sigma$ at $q=A^2=e^{2\pi\sqrt{-1}/n}$. The results in \cite{BW1, FKL, GJS19} provide a bijection between the set of isomorphism classes of irreducible finite dimensional representations of the skein algebra $\mathcal{K}^q(\Sigma)$ and the set of pairs $(r,\{p_v\})$, where $r$ is an element (called \emph{the classical shadow}) in the $\mathrm{SL}(2;\mathbb{C})$-character variety of $\Sigma$ and $\{p_v\}$ is a finite set of complex numbers (called \emph{the puncture weights}) associated to each puncture of $\Sigma$. For each puncture $v$ with a small loop $\mu_v$ around $v$, the puncture weight can be regarded as a choice of ``$n$-th root" of $\mathrm{Trace}(r([\mu_v])$ in the sense that if $\mathrm{Trace}(r([\mu_v]) = -e^{h_v}-e^{-h_v}$ for some $h_v\in \CC$, then $p_v = e^{(h_v+2\pi m_v\sqrt{-1})/n} + e^{-(h_v+2\pi m_v\sqrt{-1})/n}$ for some $m_v\in \mathbb{Z}$. The choice of puncture weight will play an important role in this paper to understand the connection between the asymptotics of the invariants and the hyperbolic cone structure on the mapping torus.

For any pseudo-Anosov diffeomorphism $\varphi: \Sigma \to \Sigma$, it is well-known that the mapping torus $M_\varphi$ obtained from $\Sigma \times [0,1]$ by gluing $(x,1) \sim (\varphi(x),0)$ is hyperbolic \cite{T88}. In particular, the restriction of the holonomy representation of $M_\varphi$ on $\pi_1(\Sigma)$ induces a $\mathrm{SL}(2;\CC)$-character of $\Sigma$ that is invariant under the action of $\varphi$ \cite[Section 3.2]{BWYI}. In this paper, we always assume that such a $\varphi$-invariant character $r$ is a generic character in the smooth part of the $\mathrm{SL}(2;\CC)$-character variety of $\Sigma$ (see Section \ref{EFQISD} for more details). By \cite{BW1, BW2, BW3}, for each choice of puncture weights $\{p_v\}$ satisfying $p_v = p_{\varphi(v)}$, the pairs $(r, \{p_v\})$ and $([r\circ \varphi], \{p_{\varphi(v)}\})$ correspond to two isomorphic irreducible representations of the Kauffman bracket skein algebra of $\Sigma$ to some vector space $V$ that are related by an intertwiner $\Lambda^q_{\varphi, r,p_v} : V\to V$. In  \cite{BWYI}, Bonahon, Wong and Yang prove that the modulus of the trace of the intertwiner $\Lambda^q_{\varphi, r,p_v}$ is real-valued invariant that depends only on the diffeomorphism $\varphi$, the $\varphi$-invariant character $r$, the root of unity $q$ and the $\varphi$-invariant puncture weight $p_v$. Furthermore, they propose the following volume conjecture of the quantum invariants of surface diffeomorphisms.
\begin{conjecture}[Conjecture 6, \cite{BWYI}]\label{BWYVC}
For the sequence of puncture weights $p_v = e^{\frac{h_v}{n}} + e^{-\frac{h_v}{n}}$,
$$\lim_{\substack{ n\to \infty \\n \text{ odd}}} \frac{4\pi}{n} \ln|\mathrm{Trace}(\Lambda^q_{\varphi, r,p_v})| = \mathrm{Vol}(M_\varphi),$$
where $\mathrm{Vol}(M_\varphi)$ is the hyperbolic volume of the mapping torus $M_\varphi$ with the complete hyperbolic structure.
\end{conjecture}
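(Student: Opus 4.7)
The plan is to make the intertwiner $\Lambda^q_{LR, r, p_v}$ for the once-punctured torus $\Sigma_{1,1}$ completely explicit and then apply the saddle point method to its trace. First, using the representation theory developed in \cite{BW1, BW2, BW3} (which attaches an explicit $n$-dimensional irreducible representation of $\mathcal{K}^q(\Sigma_{1,1})$ to each pair $(r, p_v)$ as in the introduction), I would write down matrix formulas for the elementary intertwiners $\Lambda^q_L$ and $\Lambda^q_R$ associated with the two Dehn twists $L$ and $R$ generating the mapping class group of $\Sigma_{1,1}$. These should be expressible in terms of $q$-Pochhammer symbols, i.e., cyclic quantum dilogarithms at the root of unity $q$. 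Since $\Lambda^q_{LR} = \Lambda^q_L \circ \Lambda^q_R$, its trace then takes the form of a finite sum
$$\mathrm{Trace}(\Lambda^q_{LR, r, p_v}) = \sum_{k=0}^{n-1} F(k, n; r, p_v)$$
indexed by a natural basis of the representation space.

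Next I would write $F(k, n) = \exp\bigl(n \cdot S_{r, h_v}(k/n) + O(\log n)\bigr)$, where $S_{r, h_v}$ is a potential function built from the classical dilogarithm $\Li$ and depending analytically on $r$ and the logarithmic puncture weight $h_v$. Either by a direct Riemann sum approximation or, more carefully, by a Poisson summation converting the sum into a contour integral, the leading asymptotics are governed by a saddle point $z_\ast$ of $S_{r, h_v}$ on a suitable steepest descent contour. The standard stationary phase estimate then yields
$$\frac{4\pi}{n} \ln \bigl| \mathrm{Trace}(\Lambda^q_{LR, r, p_v}) \bigr| \longrightarrow 4\pi \, \Re\bigl(S_{r, h_v}(z_\ast)\bigr)$$
up to corrections of order $\log n / n$.

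The final step is geometric. I would identify the critical equation $S'_{r, h_v}(z_\ast) = 0$ with the gluing and completeness equations for the standard ideal triangulation of the figure eight knot complement $M_{LR}$ by two ideal tetrahedra, and more generally with the equations defining a one-parameter family of hyperbolic cone structures on $M_{LR}$ parametrized by $h_v$. The Bloch--Wigner formula for ideal tetrahedral volume, together with the Neumann--Zagier potential, then allows one to verify that $4\pi \, \Re(S_{r, h_v}(z_\ast))$ equals $\Vol(M_{LR})$ when $h_v = 0$, which is precisely Conjecture \ref{BWYVC} in this case, and equals the cone-manifold volume in the generalized form announced in the abstract when $h_v \neq 0$.

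The main obstacle, I expect, is this geometric matching: aligning the algebraic saddle value with hyperbolic geometry requires careful tracking of branches of the logarithm and dilogarithm, and among the generically several critical points of $S_{r, h_v}$ one must show that the geometric solution lies on the chosen steepest descent contour and dominates the asymptotics. A secondary analytic difficulty is uniform control of error terms in the Riemann sum, especially near the endpoints of summation where the $q$-Pochhammer factors $(q;q)_k$ degenerate as $k$ approaches $n$.
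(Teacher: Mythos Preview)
Your overall strategy matches the paper's: explicit trace formula, Poisson summation, saddle point approximation, then geometric identification via Bloch--Wigner. Two points need correction.

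First, the trace is not a single sum over a basis index $k$. Using the explicit formula from \cite{BWYII} (recorded here as Proposition~\ref{Explicitfor}), $\mathrm{Trace}(\Lambda^q_{LR,r,p_v})$ factors as a \emph{product} of two independent sums $\Sigma_1 \times \Sigma_2$, one for each letter in the word $LR$, each built from a discrete quantum dilogarithm. The saddle point analysis is then carried out separately on two one-variable potentials $f_1(\alpha_1;\eta_v)$ and $f_2(\alpha_2;\eta_v)$, and the two critical points correspond to the shape parameters of the two ideal tetrahedra in the canonical triangulation. Writing the trace as a single sum with a single potential $S$ obscures this structure and would make the matching with the two-tetrahedron triangulation awkward.

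Second, and more seriously, you have misidentified which parameter governs the hyperbolic structure. The logarithmic weight $h_v$ is \emph{fixed} by the $\varphi$-invariant character $r$ (via $e^{h_v}=a_0b_0c_0$) and is generically nonzero; Conjecture~\ref{BWYVC} asserts that the limit equals the \emph{complete} volume $\mathrm{Vol}(M_\varphi)$ for every such $r$, not just when $h_v=0$. The paper's key structural observation is that the data $h_v$ (equivalently $A_1,A_2,V_1,V_2$) enters only the subleading factors $g_s(\alpha_s)$ and the normalization $D^q$, while the leading potentials $f_s$ depend only on the parameter $\eta_v = 2\pi m_v^{(n)}/n$. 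For Conjecture~\ref{BWYVC} one has $m_v^{(n)}=0$, hence $\eta_v=0$, and the critical equations reduce to the completeness equations regardless of $h_v$. Your plan to parametrize the cone family by $h_v$ would therefore not recover the statement: you would obtain a critical value depending on $r$, contradicting the conjecture's $r$-independence. The cone-manifold generalization (Conjecture~\ref{PWVC}) is obtained not by varying $h_v$ but by introducing the separate integer sequence $m_v^{(n)}$ into the puncture weight.
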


Bonahon, Wong and Yang prove Conjecture \ref{BWYVC} for the mapping torus that is homeomorphic to the figure eight knot complement \cite{BWYII} and all once-punctured torus bundles under certain assumptions of the volume \cite{BWYIII}. Besides, the first author proves Conjecture \ref{BWYVC} for four-punctured sphere bundles under similar assumptions \cite{PT}. A special feature of Conjecture \ref{BWYVC} is that the asymptotics of the quantum invariants always capture the volume of the mapping torus with the complete hyperbolic structure for different $\varphi$-invariant characters r with different representation volume.

The main goal of this paper is to investigate the possibility of these quantum invariants to capture the geometric information of the incomplete hyperbolic structure on the mapping torus $M_{\varphi}$. The main observation is that the limits of the puncture weights play a role in the asymptotics of the invariants and are related to hyperbolic cone metrics on mapping torus determined by those limits. More precisely, for each puncture $v$, let $m_{v}^{(n)}\in \{-\frac{n-1}{2}, - \frac{n-3}{2},\dots, \frac{n-1}{2}\}$ be a sequence of integers such that the limit $\lim_{n\to \infty} \frac{4\pi m_{v}^{(n)}}{n}$ exists, and let
$$ \theta_v = \Bigg|\lim_{n\to \infty} \frac{4\pi m_{v}^{(n)}}{n}\Bigg|  \in [0, 2\pi].$$ 
Assume that there exists a hyperbolic cone metric on the mapping torus $M_\varphi$ with cone angle $\theta_v$ around the meridian $\mu_v$ of each puncture $v$. Denote such a hyperbolic cone manifold by $M_{\varphi,\theta_v}$. We propose the following generalization of Conjecture \ref{BWYVC}.

 \begin{conjecture}\label{PWVC}
For the sequence of puncture weights $p_v  = e^{\frac{h+2\pi m_v^{(n)}\sqrt{-1}}{n}} + e^{-\frac{h+2\pi m_v^{(n)}\sqrt{-1}}{n}}$,
$$\lim_{\substack{ n\to \infty \\n \text{ odd}}} \frac{4\pi}{n} \ln|\mathrm{Trace}(\Lambda^q_{\varphi, r,p_v})| = \mathrm{Vol}(M_{\varphi,\theta_v}),$$
where $\mathrm{Vol}(M_{\varphi,\theta_v})$ is the hyperbolic volume of the cone manifold $M_{\varphi,\theta_v}$.
\end{conjecture}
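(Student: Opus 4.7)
The plan is to attack Conjecture \ref{PWVC} in three stages: first derive a state-sum formula for $\mathrm{Trace}(\Lambda^q_{\varphi,r,p_v})$ attached to an ideal triangulation of the mapping torus, then extract the leading exponential growth via the saddle point method, and finally identify the critical value of the limiting potential with $\mathrm{Vol}(M_{\varphi,\theta_v})$ through Neumann--Zagier theory for cone manifolds. For the first step, I would use the description of the intertwiner via quantum Teichm\"uller theory: a generic pseudo-Anosov $\varphi$ can be written as a composition $\varphi = \sigma \circ \Delta_k \circ \cdots \circ \Delta_1$ of flips between ideal triangulations of $\Sigma$ followed by a relabelling $\sigma$ matching the final triangulation with the pullback of the initial one. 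Correspondingly $\Lambda^q_{\varphi,r,p_v}$ factors as a product of elementary flip intertwiners whose matrix entries are cyclic quantum dilogarithms at $q=e^{2\pi\sqrt{-1}/n}$, with parameters depending on $r$ and $p_v$. Taking the trace and interpreting the internal summation indices as decorations of the edges of the ideal triangulation $\mathcal{T}_\varphi$ of $M_\varphi$ obtained by suspending the flip sequence, one obtains a discrete state sum
$$\mathrm{Trace}(\Lambda^q_{\varphi,r,p_v}) = \sum_{\vec{k}} \prod_{t \in \mathcal{T}_\varphi} W_n\bigl(\vec{k}, h, m_v^{(n)}\bigr)$$
over edge variables $\vec{k}$, with local weights $W_n$ built from cyclic quantum dilogarithms at each tetrahedron $t$.

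For the second step, I would convert the finite sum into a contour integral on a torus using a discrete Poisson-type transform (extending the approach of \cite{PT}), after which the integrand takes the form $\exp\bigl(\tfrac{n}{2\pi\sqrt{-1}}\,\Phi(\vec{z}; h, \theta_v) + O(1)\bigr)$ with $\Phi$ a sum of classical dilogarithms $\Li$ whose arguments are shape parameters of the tetrahedra of $\mathcal{T}_\varphi$, plus linear terms encoding the boundary data $h$ and $\theta_v$. The saddle point equations $\partial \Phi / \partial z_e = 0$ should reduce, after exponentiation, to Thurston's gluing equations for $\mathcal{T}_\varphi$ around each interior edge, together with a modified completeness equation around each boundary torus prescribing the angle defect $\theta_v$. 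One then needs to verify that the dominant saddle corresponds to the geometric solution realizing $M_{\varphi,\theta_v}$, that the original cycle can be deformed onto a steepest descent contour through this saddle without crossing poles of the quantum dilogarithm, and that the Hessian is non-degenerate so that the stationary phase expansion applies.

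For the third step, I would use the Neumann--Zagier potential function attached to $\mathcal{T}_\varphi$, whose imaginary part evaluated on the shape parameters of $M_{\varphi,\theta_v}$ equals $\mathrm{Vol}(M_{\varphi,\theta_v})$ by the Schl\"afli formula for cone manifolds; existence of the geometric solution branch for small $\theta_v$ is provided by Hodgson--Kerckhoff local rigidity. Matching the normalization of $\Phi$ to the Neumann--Zagier potential, with the linear $h,\theta_v$-terms absorbed into the boundary contributions, should yield the desired volume identification and hence the equality $\lim_{n\to\infty}\tfrac{4\pi}{n}\ln|\mathrm{Trace}(\Lambda^q_{\varphi,r,p_v})| = \mathrm{Vol}(M_{\varphi,\theta_v})$.

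The main obstacle is the uniform execution of these steps for a general pseudo-Anosov $\varphi$: the number of tetrahedra in $\mathcal{T}_\varphi$ grows with the complexity of $\varphi$, so the saddle point analysis takes place in a growing number of variables, and one must establish non-degeneracy of the Hessian together with existence of a valid steepest descent contour without explicit computation. A secondary obstacle is extending the identification of the critical value with $\mathrm{Vol}(M_{\varphi,\theta_v})$ beyond the range of $\theta_v$ where the geometric solution smoothly deforms the complete structure: near singular angles the cone manifold may degenerate, the dominant saddle can migrate, and subdominant critical points may interfere. These difficulties are precisely why the present paper restricts to the case $\varphi = LR$ on the once-punctured torus, where $\mathcal{T}_\varphi$ has only two tetrahedra and the saddle-point and potential analysis can be carried out by hand.
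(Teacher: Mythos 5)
Your proposal is a program, not a proof, and the gap is exactly where the real mathematical content lies. Note first that the statement is a conjecture: the paper itself does not prove it in general, but only for $\varphi=LR$ on $\Sigma_{1,1}$ (Theorem \ref{mainthm} and its corollary). Your outline --- state sum from the intertwiner, Poisson summation to pass to Fourier coefficients, saddle point analysis of a dilogarithm potential whose critical point equations are the gluing equations with prescribed angle defect, and identification of the critical value with the cone volume --- does coincide with the strategy the paper executes in that special case. But none of the steps that make this strategy into a proof are supplied, and several of them would fail as you have stated them. Most seriously, your appeal to Hodgson--Kerckhoff local rigidity only produces the geometric solution branch for cone angles near $0$; the conjecture (and the paper's theorem) concerns the full range $\theta_v\in[0,2\pi)$, and the paper has to prove separately (Lemma \ref{geotricone}, via explicit solution of the exponentiated holonomy equations and a continuity argument, recovering the Hilden--Lozano--Montesinos result) that the triangulation stays geometric on this whole range; without that, the ``dominant saddle'' you invoke need not exist, and the identification of the critical value with $\mathrm{Vol}(M_{\varphi,\theta_v})$ via Bloch--Wigner dilogarithms breaks down.

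Second, the analytic core is missing even in outline: verifying hypotheses (2)--(3) of the saddle point method requires concrete convexity/monotonicity estimates for $\mathrm{Im}f$ on the deformed contours (the paper's Lemma \ref{convexity} and the case analysis in Sections \ref{ALFC}--\ref{EOFC}), showing that all nonzero Fourier modes and the terms near the poles of the quantum dilogarithm contribute only $O(e^{\frac{n}{2\pi}\Lambda(2\delta)})$, which must be compared against the cone volume (Lemma \ref{assumpvol}); this comparison is angle-dependent and degenerates as $\theta_v\to 2\pi$. You also need non-degeneracy of the Hessian at the geometric point (in the paper this comes with the bonus identification with the adjoint twisted Reidemeister torsion, Proposition \ref{Hess}) and, less obviously, non-vanishing of the $n$-dependent prefactor (the factors $1+(\sqrt{-1})^n e^{A_i/2}$ and the $|D^q|^{1/n}$ normalization in Proposition \ref{lastpf}), without which the $\frac{4\pi}{n}\ln|\cdot|$ limit could be spoiled along subsequences. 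Finally, your honest admission that the number of shape variables grows with the complexity of $\varphi$ and that saddle migration and contour existence cannot currently be controlled in general is precisely the reason the conjecture remains open: what you have written is a plausible roadmap (essentially the one the paper follows for two tetrahedra), not an argument that establishes the stated limit for any case beyond those already treated.
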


In particular, Conjecture \ref{BWYVC} is a special case of Conjecture \ref{PWVC} where $m_v^{(n)}=0$ for any $n$. Besides, Conjecture \ref{PWVC} should be compared with volume conjecture of the relative Reshetikhin-Turaev invariants (see Conjecture 1.1 in \cite{WY1}). The sequence of puncture weights $m_v^{(n)}$ plays essentially the same role as the sequence of colorings of the framed link in the definition of the relative Reshetikhin-Turaev invariants.

In this paper, we study the asymptotics of the once-punctured torus bundle $\Sigma=\Sigma_{1,1}$ with pseudo-Anosov diffeomorphism $\varphi=LR$. Topologically the mapping torus is homeomorphic to the figure eight knot complement, since they share the same ideal triangulation \cite{T, Gue}. Nevertheless, it is worth mentioning that the meridian of $\Sigma$ corresponds to the longitude of the figure eight knot since both of them bound a Seifert surface inside the manifolds. For the rest of the paper, meridian means the meridian of the surface $\Sigma$ and not the figure eight knot. It is known that for any cone angle $\theta_v \in [0,2\pi)$, there exists a hyperbolic cone structure on $\Sigma$ with cone angle $\theta$ along the meridian \cite{HLM}. We provide another proof of this result in Lemma \ref{geotricone} by constructing a geometric triangulation of the cone manifold $M_{\varphi,\theta_v}$ for any cone angle $\theta_v \in [0,2\pi)$.

The main result of this paper is Theorem \ref{mainthm}. Let $\Sigma=\Sigma_{1,1}$ be the once-punctured torus and let $\varphi = LR$. Let
$m_{v}^{(n)}\in \{-\frac{n-1}{2}, - \frac{n-3}{2},\dots, \frac{n-1}{2}\}$ be a sequence of integers such that $\lim_{n\to \infty} \frac{4\pi m_{v}^{(n)}}{n}$ exists. We further assume that 
$$ \theta_v = \Bigg| \lim_{n\to \infty} \frac{4\pi m_{v}^{(n)}}{n}\Bigg|  \in [0, 2\pi).$$
 Let $\theta_v^{(n)} = \frac{4\pi m_{v}^{(n)}}{n}$ and let $\rho_{\theta_v^{(n)}} : \pi_1(\Sigma) \to \mathrm{PSL}(2;\CC)$ be the holonomy representation of the cone structure on $\Sigma$ with cone angle $\theta_v^{(n)}$. Let $\mathrm{Vol}\big(M_{\varphi, \theta_v^{(n)}}\big)$ be volume of $M_{\varphi}$ with the cone structure given by $\rho_{\theta_v^{(n)}}$. Let $\mathrm{Tor}(M_\varphi, \mu_v,  \rho_{\theta_v^{(n)}})$ be the adjoint twisted Reidemeister torsion of the mapping torus $M_\varphi $ with respect to the meridian $\mu_v$ and the representation $\rho_{\theta_v^{(n)}}$.
\begin{theorem}\label{mainthm}
The asymptotic expansion formula of $|\mathrm{Trace} \Lambda_{\varphi, r, p_v}^q |$ is given by
\begin{align*}
\Big|\mathrm{Trace}\text{ }  (\Lambda_{\varphi, r, p_v}^q) \Big|
= \frac{C(n) }{4\pi} \frac{e^{\frac{n}{4\pi} \mathrm{Vol}\big(M_{\varphi, \theta_v^{(n)}}\big)}}{\Big|\sqrt{\mathrm{Tor}(M_\varphi, \mu_v,  \rho_{\theta_v^{(n)}})}\Big|} \Big(1+O\Big(\frac{1}{n}\Big)\Big),
\end{align*}
where $C(n)$ is a sequence of real numbers satisfying $A< C(n) < B$ for some constants $A,B>0$.
\end{theorem}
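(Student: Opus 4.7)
\medskip

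\noindent\textbf{Proof proposal.} The plan is to follow the saddle-point framework used by Bonahon-Wong-Yang for Conjecture~\ref{BWYVC} in \cite{BWYII}, while carefully tracking the dependence on the new parameter $m_v^{(n)}$ that determines the cone angle. Throughout, the mapping torus is realized through its standard two-tetrahedron ideal triangulation, and the hyperbolic cone structure of cone angle $\theta_v^{(n)}\in [0,2\pi)$ is realized geometrically via Lemma~\ref{geotricone}.

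First, I would derive an explicit closed-form expression for $\mathrm{Trace}(\Lambda^q_{\varphi, r, p_v})$ as a finite one-variable sum. Since $\varphi = LR$ corresponds to two triangle flips, the intertwiner decomposes as a composition of two elementary flip intertwiners, each of which has a known matrix-element formula in terms of the cyclic quantum dilogarithm at $q=e^{2\pi\sqrt{-1}/n}$. Taking the trace and using the structure of the $\varphi$-invariant character $r$ together with the puncture weight $p_v$ reduces the computation to a sum of the form
\begin{equation*}
\mathrm{Trace}(\Lambda^q_{\varphi, r, p_v}) \;=\; \sum_{k=0}^{n-1} F_n\!\left(\tfrac{k}{n},\, \tfrac{m_v^{(n)}}{n}\right),
\end{equation*}
where each summand is expressible through the cyclic quantum dilogarithm depending on $n$, the classical shadow $r$, and the rescaled puncture weight index $\theta_v^{(n)}/(4\pi)$.

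Second, I would invoke the asymptotic expansion of the cyclic quantum dilogarithm to write each summand as $e^{n\Phi(k/n,\,\theta_v^{(n)})/(2\pi\sqrt{-1})}$ multiplied by a slowly varying amplitude. Using the Poisson summation technique of \cite{BWYII}, this finite sum is converted into a sum of contour integrals whose non-dominant members decay exponentially. A saddle-point analysis is then carried out on the dominant contour integral. The critical-point equation for $\Phi(\cdot,\theta_v^{(n)})$ should recover Thurston's gluing equations for the two-tetrahedron triangulation, with the additional constraint that the meridian holonomy is prescribed by $\theta_v^{(n)}$; by Lemma~\ref{geotricone} a non-degenerate critical point exists for every $\theta_v^{(n)} \in [0,2\pi)$ and corresponds to the holonomy representation $\rho_{\theta_v^{(n)}}$. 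Evaluating $\Phi$ at this critical point yields the Neumann-Zagier complex volume potential, whose imaginary part (after the conventional factor $n/(4\pi)$) is $\mathrm{Vol}(M_{\varphi,\theta_v^{(n)}})$; the Gaussian integral contributes a determinantal factor that I would identify with $\bigl|\sqrt{\mathrm{Tor}(M_\varphi, \mu_v, \rho_{\theta_v^{(n)}})}\,\bigr|^{-1}$ via the gluing formula for the adjoint twisted Reidemeister torsion applied to the triangulation (in the spirit of Porti's formulation and the computation of \cite{BWYII}).

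The main obstacle will be the uniform control of the saddle-point analysis as $\theta_v^{(n)}$ ranges over $[0,2\pi)$. In the complete case $\theta_v^{(n)}=0$ the critical point is an explicit algebraic number, but here it moves with $n$ and may approach the degenerate values at $\theta_v=0$ and $\theta_v \to 2\pi$. I will need to verify that the location of the saddle and its non-degeneracy are uniform, to deform the contour so as to avoid the branch cuts of the quantum dilogarithm (which themselves shift with $m_v^{(n)}/n$), and to show that the error in the Gaussian approximation is of order $1/n$ uniformly in $\theta_v^{(n)}$. A secondary technical point is the explicit identification of the Hessian determinant of $\Phi$ with the adjoint Reidemeister torsion along $\mu_v$: this requires matching the shape-parameter coordinates on the deformation variety of the triangulation with the infinitesimal deformations of $\rho_{\theta_v^{(n)}}$ in the $\mathrm{PSL}(2;\CC)$-character variety, and will be carried out using the Mayer-Vietoris/gluing formula for the torsion together with the explicit form of the gluing equations.
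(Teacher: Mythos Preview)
Your overall strategy matches the paper's: express the trace via cyclic quantum dilogarithms, apply Poisson summation, perform a saddle-point analysis whose critical equations are the cone-angle gluing equations solved by Lemma~\ref{geotricone}, identify the critical value with $\mathrm{Vol}(M_{\varphi,\theta_v^{(n)}})$, and identify the Hessian with the adjoint Reidemeister torsion. Two points deserve correction, however.

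First, the trace is not a single one-variable sum as you write. Because $\varphi=LR$ is a product of two flips and the flip intertwiners act on disjoint tensor factors, the trace \emph{factors} as a product of two independent one-variable sums, $\Sigma_1\times\Sigma_2$ (this is Proposition~\ref{Explicitfor}). Each factor carries its own potential $f_s(\alpha_s;\eta_v)$, and the two saddle-point analyses are carried out separately; the volume appears as $\mathrm{Im}\,f_1(\alpha_1(\eta_v);\eta_v)+\mathrm{Im}\,f_2(\alpha_2(\eta_v);\eta_v)$ and the torsion as the product $f_1''\cdot f_2''$. Writing it as a single sum would obscure this factored structure and make the contour-deformation and convexity arguments (Lemma~\ref{convexity}, Propositions~\ref{OtherFest1}--\ref{OtherFest3'}) considerably harder to organize.

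Second, the identification of the Hessian with $\mathrm{Tor}(M_\varphi,\mu_v,\rho_{\theta_v^{(n)}})$ in the paper does not go through a Mayer--Vietoris gluing argument; rather, one computes the $1$-loop invariant $\tau(\Sigma,\mathcal{T},\mu_v,\rho_\theta)$ of Dimofte--Garoufalidis directly from the triangulation data and invokes the known equality of the $1$-loop invariant with the adjoint torsion for this manifold (Proposition~\ref{Hess}). Your concerns about uniformity in $\theta_v^{(n)}$ are handled exactly as you anticipate, via the parametrized saddle-point lemma (Proposition~\ref{saddle}) together with the geometricity of the triangulation for all $\eta_v\in(-\pi,\pi)$.
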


The precise formula of $C(n)$ can be found in Proposition \ref{lastpf}.
Theorem \ref{mainthm} should be compared with the asymptotic expansion conjecture of the relative Reshetikhin-Turaev invariants \cite{WY4}, which suggest that the adjoint twisted Reidemeister torsion appears as the 1-loop term in the asymptotic expansion formula of the quantum invariants. As an immediate consequence of Theorem \ref{mainthm}, we have
\begin{corollary}
Conjecture \ref{PWVC} holds for the once-punctured torus bundle with pseudo-Anosov homeomorphism $\varphi=LR$ with any cone angle in $[0,2\pi)$.
\end{corollary}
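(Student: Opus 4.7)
The plan is to follow the saddle point / Poisson summation strategy developed in \cite{BWYII} for the case $m_v^{(n)}=0$, and to generalize it to track the dependence on the sequence $m_v^{(n)}$ that produces the cone angle $\theta_v$. The starting point will be an explicit finite-sum formula for $\mathrm{Trace}(\Lambda_{\varphi, r, p_v}^q)$ obtained from the intertwiner between the two irreducible representations of $\mathcal{K}^q(\Sigma_{1,1})$ associated to $(r,\{p_v\})$ and $(r\circ \varphi,\{p_{\varphi(v)}\})$. Because the mapping torus of $LR$ admits an ideal triangulation by two tetrahedra, this sum should reduce (after trivial simplifications) to a single-index sum whose summand is a product of quantum dilogarithm / quantum factorial ratios evaluated at $A=e^{\pi\sqrt{-1}/n}$, with the integers $m_v^{(n)}$ entering through an explicit linear shift in the exponent of the summand.

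Next I would apply the Poisson summation formula to rewrite this finite sum as a contour integral of a function of the form $\exp\!\bigl(\tfrac{n}{2\pi\sqrt{-1}}\,\Phi_{h,\theta_v^{(n)}}(x)\bigr)$ times lower-order factors, using the standard asymptotic expansion of the quantum dilogarithm at a primitive $n$-th root of unity in terms of the classical dilogarithm $\Li$. The potential $\Phi_{h,\theta_v^{(n)}}$ will be a sum of dilogarithms whose variables are the shape-parameter analogues of the two ideal tetrahedra in the $LR$ bundle, and whose parameters encode both the puncture-weight exponent $h$ and the cone-angle parameter $\theta_v^{(n)}=4\pi m_v^{(n)}/n$. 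The key observation is that the integer $m_v^{(n)}$ contributes only an explicit linear term in the exponent, which is precisely the deformation of the complete hyperbolic potential used in \cite{BWYII} into the potential for the cone manifold $M_{\varphi,\theta_v^{(n)}}$.

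I would then apply the classical saddle point method. Using the geometric ideal triangulation provided by Lemma \ref{geotricone}, I would show that the critical point equation of $\Phi_{h,\theta_v^{(n)}}$ is exactly the Thurston gluing plus cone-angle equation realizing the hyperbolic cone structure on $M_{\varphi,\theta_v^{(n)}}$, so that the holonomy representation $\rho_{\theta_v^{(n)}}$ can be read off at the critical point. The imaginary part of the critical value of $\Phi_{h,\theta_v^{(n)}}$ then equals $\Vol(M_{\varphi,\theta_v^{(n)}})$ by the Bloch--Wigner identification of sums of dilogarithms with volumes of ideal tetrahedra, which accounts for the exponential factor in the theorem. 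For the one-loop factor, the Hessian determinant of $\Phi_{h,\theta_v^{(n)}}$ at the critical point should, via the same combinatorial identity used in \cite{WY4} (Dubois--Yamaguchi-type formulas on an ideal triangulation), be identified with the adjoint twisted Reidemeister torsion $\Tor(M_\varphi,\mu_v,\rho_{\theta_v^{(n)}})$ with respect to the meridian $\mu_v$, which produces the $1/\sqrt{\Tor}$ factor. The bounded prefactor $C(n)$ then collects the explicit normalization constants (quantum $\Gamma$-function values at $A^2$, signs and unit-modulus phases coming from the intertwiner and from Poisson tails).

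The main obstacle I expect is the uniformity of the saddle-point estimate as $\theta_v^{(n)}$ varies over $[0,2\pi)$: one has to control the saddle point and the contour simultaneously for all $n$, and in particular verify that no unwanted saddles cross the steepest-descent contour as $\theta_v^{(n)}$ approaches $2\pi$, where the cone manifold degenerates. This is where the restriction $\theta_v \in [0,2\pi)$ (rather than $[0,2\pi]$) and the genericity of $r$ become essential. A secondary but still delicate step is the algebraic identification of the Hessian with the Reidemeister torsion: this requires matching the combinatorial $2\times 2$ determinant coming from our two-tetrahedron potential with the intrinsic torsion computation on $M_\varphi$ relative to $\mu_v$, and carrying the identification continuously in $\theta_v^{(n)}$ so that the matching at $\theta_v^{(n)}=0$ proven in \cite{BWYII} propagates to arbitrary cone angles.
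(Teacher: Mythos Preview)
Your proposal is correct and follows essentially the same route as the paper: explicit trace formula, Poisson summation, saddle-point approximation, identification of the critical point with the cone-angle gluing equations via Lemma~\ref{geotricone}, critical value with $\Vol(M_{\varphi,\theta_v^{(n)}})$, and Hessian with the adjoint Reidemeister torsion.

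One structural clarification: the trace does not reduce to a single-index sum but rather \emph{factors} as a normalized product $\Sigma_1\cdot\Sigma_2$ of two independent single-index sums, one for each ideal tetrahedron; consequently the potential splits as $f_1(\alpha_1;\eta_v)+f_2(\alpha_2;\eta_v)$ with $\eta_v=2\pi m_v^{(n)}/n$ entering only through the linear terms $\pm\eta_v\alpha_s$, and the saddle-point analysis is one-dimensional for each factor. This makes your plan easier, not harder: the ``Hessian'' is just the product $f_1''\cdot f_2''$, and the paper matches it to the torsion via the Dimofte--Garoufalidis 1-loop invariant on the two-tetrahedron triangulation rather than a Dubois--Yamaguchi formula. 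Your worry about uniformity near $\theta_v=2\pi$ is handled exactly as you suggest, by the positivity of the imaginary parts of the shape parameters on all of $(-\pi,\pi)$ (Lemma~\ref{geotricone}).
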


Finally, it is natural to ask whether Conjectures \ref{BWYVC} and \ref{PWVC} can be complexified by relating the asymptotics of $\mathrm{Trace}\text{ }  (\Lambda_{\varphi, r, p_v}^q) $ with the complex volume of the cone manifold $M_{\varphi, \theta_v}$. One difficulty is that $\mathrm{Trace}\text{ }  (\Lambda_{\varphi, r, p_v}^q) $ is not an invariant. Nevertheless, we observe that the Neumann-Zagier potential function naturally shows up when we study the asymptotics of the invariant. See Section \ref{NZPF} for more details.

\section*{Plan of this paper}
In Section \ref{Prelim}, we recall several basic ingredients for the definition and estimation of the quantum invariants. In Section \ref{EFQISD}, we find an explicit formula of the quantum invariants and rewrite the invariants as a sum of values of a certain holomorphic function defined by using the quantum dilogarithm function. By using the Poisson Summation Formula, we rewrite the invariants in terms of the Fourier coefficients together with some error terms. In Section \ref{GPF}, we discuss how the potential function captures the geometric information of the mapping torus. In Section \ref{ALFC}, by using the saddle point approximation, we obtain the asymptotics of the leading Fourier coefficients. Furthermore, in Section \ref{EOFC}, we show that all the other Fourier coefficients are negligible. Finally, we prove Theorem \ref{mainthm} in Section \ref{AOI}.

\section*{Acknowledgement}
The authors would like to thank Tian Yang and Francis Bonahon for their valuable discussions. Part of this research was completed when the first author visited Yale University. The authors are grateful to the Mathematics department of Yale University for the hospitality. The first author is supported by the NSF grants DMS-1812008 and DMS-2203334 (PI: Tian Yang).

\section{Preliminary}\label{Prelim}
Let $q=e^{\frac{2\pi\sqrt{-1}}{n}}$, where $n$ is odd. The materials in this section can be found in \cite{BWYI, BWYII}.
\subsection{Discrete quantum dilogarithm function}
Given two complex numbers $u, v \in \CC$ satisfying $v^n = 1+u^n \neq 0$, the \emph{discrete quantum dilogarithm function} \cite{FK94}, denoted by $\mathrm{QDL}^q(u,v |j)$, is defined by
$$   \mathrm{QDL}^q(u,v |i) = v^{-i} \prod_{k=1}^i \big(1+uq^{-2k}\big). $$

\begin{lemma}[Lemma 22, \cite{BWYI}] For any $j=0,\dots,n-1$,
$$   \mathrm{QDL}^q(u,v | j+n) = \mathrm{QDL}^q(u,v | j) . $$
In particular, $\mathrm{QDL}^q(u,v | j)$ can be defined for all $i \in \ZZ$.
\end{lemma}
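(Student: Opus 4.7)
The plan is to expand $\mathrm{QDL}^q(u,v|j+n)$ using the defining formula, split the product $\prod_{k=1}^{j+n}$ into the two pieces $\prod_{k=1}^{j}$ and $\prod_{k=j+1}^{j+n}$, and show that the second piece equals exactly $v^n$, so that together with the prefactor $v^{-(j+n)} = v^{-j}\cdot v^{-n}$ everything collapses to $\mathrm{QDL}^q(u,v|j)$.

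To evaluate the second product, I would use the fact that since $n$ is odd, $\gcd(2,n)=1$, and therefore $q^{-2} = e^{-4\pi\sqrt{-1}/n}$ is a primitive $n$-th root of unity. Consequently, as $k$ ranges over any $n$ consecutive integers (in particular $k=j+1,\dots,j+n$), the values $q^{-2k}$ run through all the $n$-th roots of unity exactly once. Hence
\begin{equation*}
\prod_{k=j+1}^{j+n} \bigl(1+uq^{-2k}\bigr) \;=\; \prod_{\zeta^n=1}\bigl(1+u\zeta\bigr).
\end{equation*}

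The key identity to invoke now is the well-known cyclotomic factorization: from $\prod_{\zeta^n=1}(x-\zeta)=x^n-1$, substituting $x=1/(-u)$ and clearing denominators gives
\begin{equation*}
\prod_{\zeta^n=1}\bigl(1+u\zeta\bigr) \;=\; 1-(-u)^n \;=\; 1+u^n,
\end{equation*}
where in the last step I use that $n$ is odd. By the standing hypothesis $v^n=1+u^n$, the second product is precisely $v^n$. Substituting back yields
\begin{equation*}
\mathrm{QDL}^q(u,v|j+n) \;=\; v^{-(j+n)}\Bigl(\prod_{k=1}^{j}(1+uq^{-2k})\Bigr)\cdot v^n \;=\; v^{-j}\prod_{k=1}^{j}(1+uq^{-2k}) \;=\; \mathrm{QDL}^q(u,v|j).
\end{equation*}

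I do not expect any serious obstacle: the argument is a one-line manipulation once one recognizes that a complete set of $n$-th roots of unity appears and that $v^n=1+u^n$ is exactly the product identity one needs. The only subtlety worth flagging explicitly in the proof is the use of the parity of $n$, both to ensure that $q^{-2}$ is primitive of order $n$ (so that the shift does enumerate all $n$-th roots of unity) and to remove the sign in $1-(-u)^n = 1+u^n$. Once the lemma is established, extending the definition to all $i\in\ZZ$ is immediate by periodicity.
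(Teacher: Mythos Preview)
Your argument is correct; this is exactly the standard computation. The paper itself does not supply a proof of this lemma but simply cites it from \cite{BWYI}, so there is no ``paper's own proof'' to compare against here.
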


\begin{proposition}[Lemma 7, \cite{BWYII}] \label{Estbridge}
Given $U, V \in \CC$ with $e^V = 1+e^U$, choose $q=e^{\frac{2\pi\sqrt{-1}}{n}}$, $u=e^{\frac{1}{n}U}$ and $v=e^{\frac{1}{n}V}$ for every $n$. If $\delta>0$ is sufficiently small and $n$ is sufficiently large, then
$$\big| \mathrm{QDL}^q(u,v |j) \big| = O\Big( e^{\frac{n}{2\pi}\Lambda(2\delta)}\Big)$$
whenever $-\frac{\pi}{2} -\delta \leq \frac{2\pi j}{n} \leq -\frac{\pi}{2} +\delta$ or $\frac{\pi}{2} - \delta \leq \frac{2\pi j}{n} \leq \frac{\pi}{2}-\delta$.
Moreover, the constants hidden in the condition ``$n$ sufficiently large'' and in big O notation can be chosen to depend only on $U,V$ and $\delta$.
\end{proposition}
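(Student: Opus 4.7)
The plan is to take the logarithm of $|\mathrm{QDL}^q(u,v|j)|$, recognize the resulting sum as a Riemann sum for a classical integral whose closed form involves the Lobachevsky function $\Lambda$, and then exploit that in the regime of interest the argument of $\Lambda$ lies near $0$. From the definition,
$$\log\big|\mathrm{QDL}^q(u,v|j)\big| \;=\; -\,j\,\Re(\log v)\;+\;\sum_{k=1}^{j}\log\big|1+e^{(U-4\pi i k)/n}\big|.$$
The first term equals $-j\Re(V)/n$ and is uniformly $O(1)$ for $|j|\le n$, so the task reduces to bounding the second sum.

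\textbf{From sum to integral.} View the sum as a Riemann sum of spacing $\Delta\theta=4\pi/n$ for the function $f_{n}(\theta)=\log|1+e^{U/n-i\theta}|$. As $n\to\infty$, $f_{n}\to f(\theta):=\log|2\cos(\theta/2)|$ uniformly on any compact subset of $\RR\setminus(\pi+2\pi\ZZ)$, and $f$ has integrable logarithmic singularities at $\theta\in\pi+2\pi\ZZ$. A standard Riemann-sum analysis---$O(h^{2})$ error per smooth subinterval, $O(h\log(1/h))$ error on the singular one, and $O(1/(n|\theta-\pi|))$ pointwise control of $f_{n}-f$ away from the singularity---yields
$$\sum_{k=1}^{j} f_{n}\!\Big(\tfrac{4\pi k}{n}\Big) \;=\; \frac{n}{4\pi}\int_{0}^{4\pi j/n}f(\theta)\,d\theta \;+\; O(\log n).$$

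\textbf{Evaluating the integral.} Substituting $\phi=\pi/2-\theta/2$ and using $\int\log|2\sin\phi|\,d\phi=-\Lambda(\phi)$ together with $\Lambda(\pi/2)=0$ yields
$$\frac{n}{4\pi}\int_{0}^{4\pi j/n}\log|2\cos(\theta/2)|\,d\theta \;=\; \frac{n}{2\pi}\,\Lambda\!\Big(\tfrac{\pi}{2}-\tfrac{2\pi j}{n}\Big).$$
When $2\pi j/n=\pm\pi/2+\varepsilon$ with $|\varepsilon|\le\delta$, the argument $\pi/2-2\pi j/n$ lies (after using the $\pi$-periodicity of $\Lambda$) in $[-\delta,\delta]$. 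For $\delta$ small enough that $2\delta\le \pi/6$, the function $\Lambda$ is monotone increasing on $[0,2\delta]$, so $|\Lambda(\cdot)|\le\Lambda(\delta)$ throughout, producing
$$\log\big|\mathrm{QDL}^q(u,v|j)\big| \;\le\; \frac{n}{2\pi}\Lambda(\delta)\;+\;O(\log n).$$

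\textbf{Closing the gap and the main obstacle.} Since $\Lambda$ is strictly increasing on $[0,\pi/6]$, the quantity $\Lambda(2\delta)-\Lambda(\delta)>0$ is a constant depending only on $\delta$; for $n$ sufficiently large (with all implicit constants depending only on $U$, $V$, and $\delta$), the $O(\log n)$ error is absorbed into the gap $\tfrac{n}{2\pi}(\Lambda(2\delta)-\Lambda(\delta))$, producing the bound $|\mathrm{QDL}^q(u,v|j)|=O(\exp((n/(2\pi))\Lambda(2\delta)))$. The main technical obstacle is this error control near the logarithmic singularities at $\theta\in\pi+2\pi\ZZ$: both the Riemann-sum discrepancy and the $f_{n}\to f$ limit produce near-singular contributions that must be carefully counted. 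The factor-of-two buffer between $\Lambda(\delta)$ and $\Lambda(2\delta)$ in the stated exponent is precisely what gives us room to carry out this absorption.
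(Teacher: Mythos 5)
Your proposal supplies an actual proof where the paper gives none: Proposition \ref{Estbridge} is imported verbatim from \cite{BWYII} (Lemma 7 there), so there is no internal argument to compare against, and a self-contained derivation is welcome. Your route is sound and, in fact, yields a slightly stronger bound: taking logarithms, the term $-j\Re(V)/n$ is uniformly $O(1)$, the Riemann-sum comparison gives $\sum_{k=1}^{j}\log|1+e^{U/n}e^{-4\pi\sqrt{-1}k/n}|=\frac{n}{2\pi}\Lambda\big(\frac{\pi}{2}-\frac{2\pi j}{n}\big)+O(\log n)$, the substitution and $\Lambda(\pi/2)=0$ are correct, and for $\big|\frac{\pi}{2}-\frac{2\pi j}{n}\big|\le\delta$ (mod $\pi$, using that $\Lambda$ is odd and $\pi$-periodic and increasing on $[0,\pi/6]$) you get exponent $\frac{n}{2\pi}\Lambda(\delta)+O(\log n)$, which is then absorbed by the strict gap $\Lambda(2\delta)-\Lambda(\delta)>0$ for $n$ large, with constants depending only on $U,V,\delta$. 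Two points should be made explicit to close the sketch. First, when $\frac{2\pi j}{n}$ is near $-\frac{\pi}{2}$ the index $j$ is negative and the product $\prod_{k=1}^{j}$ is not literally defined; you must invoke the $n$-periodicity of $\mathrm{QDL}^q$ (the paper's Lemma 2.1) and replace $j$ by $j+n$, after which the integration range $[0,4\pi j/n]$ runs past the interior singularity $\theta=\pi$ as well as the endpoint region near $3\pi$, so your ``singular subinterval'' bookkeeping must cover (at most) two such regions, each costing $O(\log n)$. Second, the near-singular terms of the sum need a quantitative lower bound on the factors: since $e^{V}=1+e^{U}\neq 0$ one has $e^{U}\neq -1$, and (using that $n$ is odd when $\Re U=0$) one gets $|1+uq^{-2k}|\ge c_{U}/n$ for all $k$, so each such term is $O(\log n)$ in absolute value and your pointwise estimate $O\big(1/(n|\theta-\pi|)\big)$ away from the singularity sums to $O(\log n)$; this is exactly the ``main obstacle'' you flag, and it does go through with constants depending only on $U,V,\delta$. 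With these two clarifications the argument is complete and consistent with the statement as used later (the error terms in Proposition \ref{Sigmasum} only require the $e^{\frac{n}{2\pi}\Lambda(2\delta)}$ bound).
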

Define a function $D^q(u)$ by
\begin{align}\label{defD}
D^q(u) = \prod_{j=1}^n \mathrm{QDL}^q(u,v | j).
\end{align}
This function will show up as part of the normalization factor of the invariants. Moreover, its asymptotics is given by the following proposition.
\begin{proposition}[Proposition 18, \cite{BWYII}]\label{asymDqu}
Let $A\in \CC$ be given with $e^{A} \neq -1$. For every odd $n$, set $q=e^{\frac{2\pi \sqrt{-1}}{n}}$ and $u = qe^{-\frac{1}{n}A}$. Then
\begin{align*}
\lim_{\substack{n\to \infty\\ n \equiv 1 \text{ }\mathrm{mod}{4}}} |D^q(u)|^{\frac{1}{n}} &= 2^{\frac{-\mathrm{Im} A}{4\pi}}
\Bigg| \frac{\cosh \frac{A-\pi\sqrt{-1}}{4}}{\cosh \frac{A+\pi\sqrt{-1}}{4}} \Bigg|^{\frac{1}{4}}, \\
\lim_{\substack{n\to \infty\\ n \equiv 3 \text{ }\mathrm{mod}{4}}} |D^q(u)|^{\frac{1}{n}} &= 2^{\frac{-\mathrm{Im} A}{4\pi}}
\Bigg| \frac{\sinh \frac{A-\pi\sqrt{-1}}{4}}{\sinh \frac{A+\pi\sqrt{-1}}{4}} \Bigg|^{\frac{1}{4}} .
\end{align*}
\end{proposition}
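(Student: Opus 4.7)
The plan is to simplify $D^q(u)$ via a product identity over $n$-th roots of unity, and then extract the $O(1)$ correction to a Riemann sum whose leading integral vanishes.

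First, I would expand $D^q(u) = v^{-n(n+1)/2}\prod_{k=1}^n(1+uq^{-2k})^{n-k+1}$. Since $n$ is odd and $q^n=1$, the numbers $uq^{-2k}$ for $k=1,\dots,n$ are the $n$ distinct roots of $z^n = u^n = e^{-A}$, so $\prod_{k=1}^n(1+uq^{-2k}) = 1+e^{-A}$ and $|v|^n = |1+e^{-A}|$. Writing $n-k+1=(n+1)-k$, taking moduli, and dividing by $n$ yields
\[
\frac{1}{n}\log|D^q(u)| = \frac{n+1}{2n}\log|1+e^{-A}| - \frac{1}{n}\sum_{k=1}^n k\log|1+uq^{-2k}|,
\]
reducing the problem to the asymptotics of $\frac{1}{n}\sum_{k=1}^n k\log|1+uq^{-2k}|$.

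Next, I would analyze this as a Riemann sum for $n^2\int_0^1 t\log|1+e^{-4\pi it}|\,dt$, where the $e^{-A/n}$ factor and the $q$-shift in $u=qe^{-A/n}$ contribute only lower-order effects away from the singular locus. Using $|1+e^{-4\pi it}| = 2|\cos 2\pi t|$, the standard values $\int_0^1 \log|\cos 2\pi t|\,dt = -\log 2$ and $\int_0^1 t\log|\cos 2\pi t|\,dt = -\frac{1}{2}\log 2$ imply $\int_0^1 t\log|1+e^{-4\pi it}|\,dt = 0$. This cancellation forces $\sum_k k\log|1+uq^{-2k}|=O(n)$, which is exactly the order needed for $|D^q(u)|^{1/n}$ to have a finite nonzero limit.

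Finally, I would extract the $O(n)$ coefficient by separating the sum into a regular part (away from the singularities at $t=1/4,3/4$ of the integrand) and a singular part. Away from the singularities, Euler--Maclaurin corrections combined with the $\mathrm{Im}\,A$-dependent tilt induced by $u=qe^{-A/n}$ contribute the factor $2^{-\mathrm{Im}\,A/(4\pi)}$. Near each singularity, the local expansion $1+uq^{-2k}\approx \frac{4\pi i}{n}(k-k^*)$ with $k^*=\frac{n}{4}+\frac{1}{2}+\frac{iA}{4\pi}$ produces a finite product of terms of size $O(1/n)$; whether the nearest sample point $k$ lies to the right or left of $\mathrm{Re}(k^*)$ is determined by $n\bmod 4$, which is the source of the $\cosh$ vs $\sinh$ dichotomy. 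Using the Euler product formulae for $\cosh$ and $\sinh$, the accumulated singular contributions assemble into $|\cosh((A-\pi i)/4)/\cosh((A+\pi i)/4)|^{1/4}$ for $n\equiv 1\pmod 4$ and the $\sinh$ analog for $n\equiv 3\pmod 4$. The main obstacle is this final matching: carefully tracking the subleading Riemann-sum corrections through the singular regions and identifying them with the precise hyperbolic factors. An alternative approach would be to express $D^q(u)$ as a specialization of the Faddeev--Kashaev non-compact quantum dilogarithm, whose asymptotics are known explicitly in terms of classical special functions, at the cost of proving a separate comparison identity.
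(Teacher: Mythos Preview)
The paper does not contain a proof of this proposition: it is quoted without proof in Section~\ref{Prelim} as Proposition~18 of \cite{BWYII}, so there is no argument in the present paper against which to compare yours.

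That said, your outline is sound through the reduction step. The identity $D^q(u)=v^{-n(n+1)/2}\prod_{k=1}^n(1+uq^{-2k})^{n-k+1}$ is correct, as is the computation $\prod_k(1+uq^{-2k})=1+e^{-A}$ and the resulting formula for $\frac{1}{n}\log|D^q(u)|$. The vanishing of $\int_0^1 t\log|1+e^{-4\pi it}|\,dt$ is also right, and it correctly explains why the limit of $|D^q(u)|^{1/n}$ is finite and nonzero rather than exponentially growing or decaying. Your localization of the singular points at $k^*=\tfrac{n}{4}+\tfrac{1}{2}+\tfrac{iA}{4\pi}$ (and its companion near $t=3/4$) is accurate, and the parity of $n\bmod 4$ does govern the offset of the nearest lattice point from $\mathrm{Re}(k^*)$.

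The genuine gap is the one you name yourself: converting the local analysis near the two singularities into the precise factors $\bigl|\cosh\tfrac{A\mp\pi i}{4}\bigr|^{\pm 1/4}$ versus $\bigl|\sinh\tfrac{A\mp\pi i}{4}\bigr|^{\pm 1/4}$, and simultaneously isolating the regular Euler--Maclaurin contribution $2^{-\mathrm{Im}A/(4\pi)}$. This step is not just bookkeeping; one must control the interaction between the $O(1/n)$ drift $e^{-A/n}$, the $q$-shift, and the logarithmic divergence of the integrand, and then recognize the resulting infinite products as the Weierstrass products for $\cosh$ and $\sinh$. As written, this part is a heuristic rather than a proof. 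Your alternative suggestion---expressing $D^q(u)$ through the Faddeev quantum dilogarithm and invoking its known semiclassical expansion---is closer in spirit to how such identities are typically handled in the literature, and is likely the cleaner route if you want a complete argument.
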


\subsection{Continuous quantum dilogarithm function}

For every odd $n$, the \emph{small continuous quantum dilogarithm function} $\mathrm{li}_2^{\frac{2}{n}}(z)$ is defined by
$$ \mathrm{li}_2^{\frac{2}{n}}(z) = \frac{4\pi\sqrt{-1}}{n} \int_{\Omega} \frac{e^{(2z-\pi)t}}{4t\sinh(\pi t) \sinh\big(\frac{2\pi t}{n}\big)} dt,$$
where $-\frac{\pi}{n} < \mathrm{Re} z < \pi + \frac{\pi}{n}$ and $\Omega$ is a contour given by
$$ \Omega = (-\infty, -\epsilon] \cup \Big\{ \epsilon e^{\sqrt{-1}\theta} \mid \theta \in [-\pi, \pi] \Big\} \cup [\epsilon, \infty) $$
for $\epsilon > 0$. Besides, the \emph{big continuous quantum dilogarithm function} $\mathrm{Li}_2^{\frac{2}{n}}(z)$ is defined by
$$ \mathrm{Li}_2^{\frac{2}{n}}(z) = e^{\frac{n}{4\pi\sqrt{-1}} \mathrm{li}_2^{\frac{2}{n}}(z)} $$
for $ -\frac{\pi}{n} < \mathrm{Re} z < \pi + \frac{\pi}{n}$.
\begin{proposition}[Proposition 4, \cite{BWYII}] \label{QDL2}
The function $\mathrm{Li}_2^{\frac{2}{n}}$ uniquely extends to a meromorphic function on the whole complex plane, whose poles are the points $\pi+\frac{\pi}{n}+a\pi + \frac{2b\pi}{n}$ for all integers $a,b \geq 0$, and whose zeros are the points $-\frac{\pi}{n}-a\pi - \frac{2b\pi}{n}$ for all integers $a,b \geq 0$.
This extension satisfies the functional equations
$$ \mathrm{Li}^{\frac{2}{n}}_2(z+ \pi) =  \Big(1+e^{\frac{\sqrt{-1}z}{n}}\Big)^{-1} \mathrm{Li}^{\frac{2}{n}}_2(z). $$
\end{proposition}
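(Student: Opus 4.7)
The plan has three stages: (i) verify absolute convergence and holomorphy of $\mathrm{li}_2^{2/n}$ in the strip $-\pi/n<\mathrm{Re}(z)<\pi+\pi/n$; (ii) derive the stated functional equation together with a companion equation of shift $2\pi/n$ by a residue computation; (iii) analyze the boundary behavior of the integral and iterate the functional equations to produce the full pole and zero lattices.

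For stage (i), the integrand decays like $e^{(|\mathrm{Re}(2z-\pi)|-\pi-2\pi/n)|t|}/|t|$ on the horizontal parts of $\Omega$, giving absolute convergence exactly in the declared strip; the detour around $t=0$ is a closed loop contributing a finite amount, and differentiation under the integral sign gives holomorphy of $\mathrm{li}_2^{2/n}$. Then $\mathrm{Li}_2^{2/n}$ is holomorphic and nonvanishing in the strip as an exponential.

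The computational core is stage (ii). Starting from
\[
\mathrm{li}_2^{2/n}(z+\pi)-\mathrm{li}_2^{2/n}(z)=\frac{4\pi\sqrt{-1}}{n}\int_\Omega\frac{e^{(2z+\pi)t}-e^{(2z-\pi)t}}{4t\sinh(\pi t)\sinh(2\pi t/n)}\,dt,
\]
the identity $e^{(2z+\pi)t}-e^{(2z-\pi)t}=2e^{2zt}\sinh(\pi t)$ cancels one $\sinh$ factor, leaving $\frac{2\pi\sqrt{-1}}{n}\int_\Omega e^{2zt}/(t\sinh(2\pi t/n))\,dt$. Closing the contour picks up simple residues at $t=\pm nk\sqrt{-1}/2$ ($k\ge 1$) together with the double pole at $t=0$; the sum is a geometric series whose exponentiation produces the stated identity $\mathrm{Li}_2^{2/n}(z+\pi)=(1+e^{\sqrt{-1}z/n})^{-1}\mathrm{Li}_2^{2/n}(z)$. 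A parallel difference $\mathrm{li}_2^{2/n}(z+2\pi/n)-\mathrm{li}_2^{2/n}(z)$, now cancelling the other $\sinh$ factor via $e^{(4\pi/n)t}-1=2e^{(2\pi/n)t}\sinh(2\pi t/n)$, yields a companion functional equation for the $2\pi/n$-shift (consistent with the paper's plural ``equations'').

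For stage (iii), the ``seed'' pole and zero come from boundary divergence of the integral: as $\mathrm{Re}(z)\to(\pi+\pi/n)^-$, the integrand behaves like $e^{(2\mathrm{Re}(z)-2\pi-2\pi/n)t}/t$ for large $t$, so $\mathrm{li}_2^{2/n}(z)\sim -\frac{4\pi\sqrt{-1}}{n}\log(\pi+\pi/n-z)$ modulo bounded terms, giving $\mathrm{Li}_2^{2/n}$ a simple pole at $z=\pi+\pi/n$; the symmetric analysis at the left boundary produces a simple zero at $z=-\pi/n$. The two shifts $\pi$ and $2\pi/n$ jointly generate every translation $a\pi+2b\pi/n$, so iterating the functional equations extends $\mathrm{Li}_2^{2/n}$ uniquely (by the identity theorem) to all of $\CC$ and propagates the seed pole and zero across the full lattices $\pi+\pi/n+a\pi+2b\pi/n$ and $-\pi/n-a\pi-2b\pi/n$ for $a,b\ge 0$. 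The odd-$n$ assumption ensures distinct $(a,b)$ give distinct lattice points, so every pole and zero is simple. The main obstacle is the boundary analysis---precisely extracting the logarithmic divergence of the integral and separating it from bounded corrections---together with the residue bookkeeping in stage (ii), particularly the contribution of the double pole at $t=0$ and the choice of logarithm branch needed to match the exact stated form of the functional equation.
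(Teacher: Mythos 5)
The paper itself contains no proof of this statement---it is quoted from [BWYII, Proposition 4]---so your proposal can only be measured against the standard argument, and your three-stage plan (holomorphy in the strip, shift equations by cancelling one $\sinh$ and summing residues, extension by iterating the shifts) is indeed that argument; stage (i) is fine. The problem is that the computational core of stage (ii) does not deliver what you claim. The residues you list sit at $t=\pm nk\sqrt{-1}/2$, where $e^{2zt}=e^{\pm\sqrt{-1}knz}$, so the logarithmic series you sum is in powers of $e^{\sqrt{-1}nz}$, and exponentiation gives a factor $\big(1+e^{\sqrt{-1}nz}\big)^{-1}$ (after the double pole at $t=0$ is tracked: its residue is linear in $z$ and is exactly what reconciles the two possible closings, via $\log(1+e^{\sqrt{-1}nz})=\sqrt{-1}nz+\log(1+e^{-\sqrt{-1}nz})$). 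It cannot produce the printed factor $\big(1+e^{\sqrt{-1}z/n}\big)^{-1}$: nothing in your residue list involves $e^{\sqrt{-1}z/n}$. In fact the $nz$ version is what this paper actually uses downstream---the prefactors $\big(1+(\sqrt{-1})^ne^{A_1/2}\big)$ and $\big(1+(\sqrt{-1})^ne^{A_2/2}\big)$ in $\Sigma_{1,2}$ and $\Sigma_{2,2}$ are precisely $1+e^{\sqrt{-1}nz}$ at $z=\frac{\pi}{2}+\frac{A_1}{2n\sqrt{-1}}-\frac{2\pi i_1}{n}$---so the quoted equation carries a typo, and your assertion that the residue sum ``produces the stated identity'' is a sign the computation was not carried through; done carefully, it forces the exponent $nz$ and requires you to actually resolve the $t=0$ contribution and branch choice that you defer to the end as ``obstacles.''

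Stage (iii) also rests on a false claim: distinct pairs $(a,b)\geq 0$ do give the same lattice point, e.g. $(a,b)$ and $(a+2,\,b-n)$, concretely $(2,0)$ and $(0,n)$ both give $\pi+\frac{\pi}{n}+2\pi$; oddness of $n$ only guarantees $\pi\ZZ+\frac{2\pi}{n}\ZZ=\frac{\pi}{n}\ZZ$, and the poles and zeros of the extension are in general not simple (iterating the $\pi$-shift $k$ times multiplies by $\prod_{j=1}^{k}\big(1+(-1)^je^{\sqrt{-1}nz}\big)^{-1}$, so multiplicities equal the number of admissible representations of the point). Simplicity is not asserted in the proposition, so drop it; what your sketch actually needs---and omits---is the check that no prospective pole coincides with a prospective zero (immediate here, since the pole points are positive reals and the zero points negative reals, so no cancellation occurs) and that the accumulated factors vanish exactly on the asserted set. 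Finally, the ``seed from boundary log-divergence'' analysis is unnecessary and harder to make rigorous than you indicate: the pole at $\pi+\frac{\pi}{n}$ and the zero at $-\frac{\pi}{n}$ fall out of the corrected shift equation directly, since $1+e^{\sqrt{-1}nz}$ vanishes at $z=\pm\frac{\pi}{n}$ while $\mathrm{Li}_2^{2/n}$ is finite and nonvanishing at the companion points inside the strip; and since the strip has width $\pi+\frac{2\pi}{n}>\pi$, the $\pi$-shift alone already extends the function uniquely to all of $\mathbb{C}$, the second shift equation being needed only to organize the pole and zero sets, not to construct the extension.
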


\subsection{Classical dilogarithm function and Lobachesky function}

For $z\in \CC \setminus [1,\infty)$, the \emph{classical dilogarithm function} $\mathrm{li}_2(z)$ is given by
$$ \mathrm{li}_2(z) = -\int_0^z \frac{\log(1-t)}{t} dt.$$
The discrete quantum dilogarithm function and the continuous quantum dilogarithm function are related as follows.
\begin{proposition}[Corollary 5, \cite{BWYII}] \label{QDL1}
Let $U, V \in \CC$ satisfying $e^U = 1+e^V$. For every odd integer $n$, let $q=e^{\frac{2\pi i }{n}}, u=e^{\frac{1}{n}U}$ and $v=e^{\frac{1}{n}V}$. For every $j \in \ZZ$,
$$ \mathrm{QDL}^q(u,v |j) = e^{-\frac{j}{n}V} \frac{\mathrm{Li}_2^{\frac{2}{n}}\Big(\frac{\pi}{2}-\frac{\pi}{n}+\frac{U}{2\pi i}-\frac{2\pi j}{n}\Big)}{ \mathrm{Li}_2^{\frac{2}{n}}\Big(\frac{\pi}{2}-\frac{\pi}{n}+\frac{U}{2\pi i}\Big)}.$$
\end{proposition}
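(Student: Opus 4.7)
The plan is to prove the identity by induction on $j$, extending to all $j \in \ZZ$ by the meromorphic continuation of $\mathrm{Li}_2^{2/n}$ provided by Proposition \ref{QDL2}. At $j = 0$ both sides equal $1$, since the defining product is empty and the ratio of $\mathrm{Li}_2^{2/n}$ values reduces to $1$. From the definition, the discrete quantum dilogarithm satisfies the recursion
\[
\mathrm{QDL}^q(u,v|j) = \mathrm{QDL}^q(u,v|j-1)\cdot v^{-1}(1+uq^{-2j}),
\]
so the inductive step reduces to establishing the single-step identity
\[
\frac{\mathrm{Li}_2^{2/n}\!\left(z_0 - 2\pi j/n\right)}{\mathrm{Li}_2^{2/n}\!\left(z_0 - 2\pi(j-1)/n\right)} = 1 + e^{(U - 4\pi ij)/n}, \qquad z_0 = \tfrac{\pi}{2} - \tfrac{\pi}{n} + \tfrac{U}{2\pi i}.
\]
This is a functional equation for $\mathrm{Li}_2^{2/n}$ under the shift $z \mapsto z - 2\pi/n$, complementary to the $\pi$-shift recorded in Proposition \ref{QDL2}.

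To derive this complementary functional equation I would work directly from the integral definition and compute $\mathrm{li}_2^{2/n}(z - 2\pi/n) - \mathrm{li}_2^{2/n}(z)$. The algebraic identity $e^{-4\pi t/n} - 1 = -2 e^{-2\pi t/n}\sinh(2\pi t/n)$ cancels the $\sinh(2\pi t/n)$ factor in the denominator of the integrand, reducing the difference to an integral of the form
\[
\int_\Omega \frac{e^{at}}{t\sinh(\pi t)}\,dt, \qquad a = 2z - \pi - \tfrac{2\pi}{n}.
\]
I would then evaluate this integral by closing the contour in a half-plane and summing the residues at the simple poles $t = ik$, $k \in \ZZ_{>0}$, using $\mathrm{Res}_{t = ik} = \frac{(-1)^k e^{ika}}{ik\pi}$. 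Combined with the Taylor expansion $\sum_{k \geq 1}(-w)^k/k = -\log(1+w)$, the integral simplifies to $-2\log(1 + e^{ia})$. Exponentiating and substituting $z = z_0 - 2\pi(j-1)/n$ yields the required linear factor $1 + e^{(U - 4\pi ij)/n}$ after a short arithmetic check using the definition of $z_0$.

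The main technical obstacle I expect is the contour manipulation: the closing argument requires $\mathrm{Re}(z)$ to lie in a sufficiently narrow strip so that the integrand decays at $\pm\infty$ along the real axis, and I must carefully track the orientation of the small detour around $t = 0$ prescribed by $\Omega$ so as not to introduce sign errors in the residue sum. Once the functional equation is established on this initial analytic region, the meromorphic extension of $\mathrm{Li}_2^{2/n}$ granted by Proposition \ref{QDL2} propagates the induction to all $U, V \in \CC$ satisfying the stated relation and to all $j \in \ZZ$.
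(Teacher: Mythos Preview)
The paper does not supply a proof of this proposition; it is quoted verbatim from \cite{BWYII} (as Corollary~5 there), so there is no argument in the present paper to compare your proposal against. Your strategy---induction on $j$, reducing the inductive step to the ``second'' functional equation for $\mathrm{Li}_2^{2/n}$ under the shift $z\mapsto z-2\pi/n$, and deriving that equation by cancelling the $\sinh(2\pi t/n)$ factor in the integrand and summing residues---is the standard one and is correct. Your residue computation is right, and the resulting identity $\mathrm{Li}_2^{2/n}(z-2\pi/n)/\mathrm{Li}_2^{2/n}(z)=1+e^{i(2z-\pi-2\pi/n)}$ is consistent with the pole and zero locations recorded in Proposition~\ref{QDL2}.

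One remark for the arithmetic check you defer to the end: the printed statement has $\frac{U}{2\pi i}$ in the argument of $\mathrm{Li}_2^{2/n}$, but this appears to be a transcription typo for $\frac{U}{2n i}$. This is visible from how the formula is used later in the paper (the definition of $\Sigma_{1,1}$ has argument $\frac{\pi}{2}+\frac{A_1}{2n\sqrt{-1}}$), and independently from the fact that the zero set of $1+e^{i(2z-\pi-2\pi/n)}$ forces $z_0=\frac{\pi}{2}-\frac{\pi}{n}+\frac{U}{2ni}$ in order to recover the factor $1+uq^{-2j}=1+e^{(U-4\pi ij)/n}$. With this corrected $z_0$ your identity $ia=U/n-4\pi ij/n$ holds; with the printed $\frac{U}{2\pi i}$ it would not. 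This is an error in the paper's transcription of the cited result, not a gap in your argument.
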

Besides, the continuous quantum dilogarithm function and the classical dilogarithm function are related as follows.
\begin{proposition}[Proposition 3, \cite{BWYII}] \label{conttoclass}
For every $z$ with $0<\mathrm{Re} z < \pi$,
$$ \mathrm{li}_2^{\frac{2}{n}}(z) = \mathrm{li}_2\Big(e^{2\sqrt{-1}z}\Big) + O\bigg(\frac{1}{n^2}\bigg),$$
where the constant in the big O notation can be chosen to be dependent on $z$ uniformly on any compact subset.
\end{proposition}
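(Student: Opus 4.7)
The plan is to peel off the $n$-dependence from $\mathrm{li}_2^{2/n}(z)$ by replacing $\sinh(2\pi t/n)$ with its leading small-argument approximation $2\pi t/n$, isolating an $n$-independent contour integral $L(z)$, and then to identify $L(z)$ with $\mathrm{li}_2(e^{2\sqrt{-1}z})$ by residue calculus. The elementary expansion $x/\sinh x = 1 - x^2/6 + O(x^4)$ at $x = 2\pi t/n$ gives
$$\frac{4\pi\sqrt{-1}/n}{\sinh(2\pi t/n)} = \frac{2\sqrt{-1}}{t}\Big(1 + O\Big(\tfrac{t^2}{n^2}\Big)\Big),$$
so that $\mathrm{li}_2^{2/n}(z) = L(z) + E_n(z)$, where
$$L(z) := \int_\Omega \frac{\sqrt{-1}\,e^{(2z-\pi)t}}{2\,t^2\sinh(\pi t)}\,dt$$
is independent of $n$ and $E_n(z)$ is the integral of the Taylor remainder against $\frac{e^{(2z-\pi)t}}{4t\sinh(\pi t)}$.

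First I would bound $E_n(z) = O(1/n^2)$ uniformly on compact subsets of the strip $\{0 < \Re z < \pi\}$ by splitting $\Omega$ at $|t| = n/(4\pi)$. On the near portion the Taylor remainder contributes a factor of size $t^2/n^2$, which combined with the absolutely integrable factor $|e^{(2z-\pi)t}|/|\sinh(\pi t)|$ (integrability uses $0 < \Re z < \pi$; the bound is uniform on compact subsets) yields a contribution of size $O(1/n^2)$. On the far portion $|1/\sinh(2\pi t/n)|$ is bounded and $|e^{(2z-\pi)t}|/|\sinh(\pi t)|$ decays exponentially along the real tails of $\Omega$, so both the original and leading integrands contribute only exponentially small terms in $n$.

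Next, to evaluate $L(z)$, I would first restrict to $z$ with $\im z > 0$ and close $\Omega$ by the upper-half-plane semicircle of radius $R_N = N + 1/2$, chosen to keep distance $1/2$ from every pole of $1/\sinh(\pi t)$ at $t = \sqrt{-1}k$. The local identity $\sinh(\pi(\sqrt{-1}k + w)) = (-1)^k \sinh(\pi w)$ yields a uniform lower bound $|\sinh(\pi t)| \geq c_0 > 0$ on the arc near each pole, while elsewhere $|\sinh(\pi t)|$ grows like $e^{\pi|\Re t|}$; together with the $|t|^{-2}$ factor and the decay of $|e^{(2z-\pi)t}|$ valid for $\im z > 0$, the arc contribution vanishes as $N \to \infty$. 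Since $\Omega$ avoids $t = 0$ via its detour, the residue theorem gives
$$L(z) = 2\pi\sqrt{-1}\sum_{k=1}^\infty \mathrm{Res}_{t = \sqrt{-1}k}\frac{\sqrt{-1}\,e^{(2z-\pi)t}}{2t^2\sinh(\pi t)}.$$
A direct computation using $\sinh'(\pi\sqrt{-1}k) = \pi(-1)^k$ and $(\sqrt{-1}k)^2 = -k^2$ shows the $k$-th residue equals $-\frac{\sqrt{-1}\,e^{2\sqrt{-1}kz}}{2\pi k^2}$, so summing gives $L(z) = \sum_{k \geq 1} e^{2\sqrt{-1}kz}/k^2 = \mathrm{li}_2(e^{2\sqrt{-1}z})$. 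The identity then extends to the full strip $\{0 < \Re z < \pi\}$ by analytic continuation, since both sides are holomorphic there.

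The hard part will be the arc estimate in the residue computation: one must carefully bound $|\sinh(\pi t)|^{-1}$ uniformly on the circles of radius $N + \frac{1}{2}$ as $N \to \infty$ and ensure the exponential factor $|e^{(2z-\pi)t}|$ does not overwhelm the $|t|^{-2}$ decay. Restricting first to $\im z > 0$ and then invoking analytic continuation sidesteps the lack of decay along the imaginary axis for real $z$. The remaining Taylor expansion and exponential-decay bookkeeping is routine.
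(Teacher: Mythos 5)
This paper does not actually prove the statement: it is quoted verbatim as Proposition 3 of [BWYII], so there is no in-paper argument to compare against, and I can only judge your proof on its own terms. Your route is the standard semiclassical expansion of the continuous (Faddeev-type) quantum dilogarithm, and almost all of it is sound: the replacement of $\frac{4\pi\sqrt{-1}/n}{\sinh(2\pi t/n)}$ by $\frac{2\sqrt{-1}}{t}$ with relative error $O(t^2/n^2)$, the splitting of $\Omega$ at $|t|\asymp n$ with exponentially small tails, the use of the absolute integrability of $|e^{(2z-\pi)t}|/|\sinh(\pi t)|$ along $\Omega$ (which is where $0<\mathrm{Re}\,z<\pi$ enters, uniformly on compact subsets), the residue $-\frac{\sqrt{-1}e^{2\sqrt{-1}kz}}{2\pi k^2}$ at $t=\sqrt{-1}k$, the arc bound on circles of radius $N+\tfrac12$, and the final analytic continuation from $\mathrm{Im}\,z>0$ to the whole strip are all correct.

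The one genuine gap is your treatment of the order-three pole at $t=0$. The sentence ``since $\Omega$ avoids $t=0$ via its detour'' conflates two different issues: the detour guarantees the integral is defined, but whether the closed contour ($\Omega$ plus the upper arc) encloses $t=0$ depends on which side of the origin the detour passes, and the answer genuinely changes with that choice. Indeed $\mathrm{Res}_{t=0}\,\frac{\sqrt{-1}\,e^{(2z-\pi)t}}{2t^2\sinh(\pi t)}=\frac{\sqrt{-1}}{2\pi}\big(\frac{(2z-\pi)^2}{2}-\frac{\pi^2}{6}\big)$, so if the detour passed below the real axis your closed contour would pick up the extra term $\frac{\pi^2}{6}-\frac{(2z-\pi)^2}{2}$, and by the dilogarithm inversion relation the main term would become $-\mathrm{li}_2\big(e^{-2\sqrt{-1}z}\big)$ instead of $\mathrm{li}_2\big(e^{2\sqrt{-1}z}\big)$ — a different function, so the statement would fail. (The definition of $\Omega$ as reproduced in this paper, with $\theta\in[-\pi,\pi]$, is itself ambiguous — literally a full circle — so you must pin down the convention from [BWYII]/Faddeev, where the detour passes through the upper half-plane; a quick sanity check at $z=\pi/2$, where oddness of the integrand gives $\mathrm{li}_2^{2/n}(\pi/2)=-\frac{\pi^2}{12}+O(1/n^2)=\mathrm{li}_2(-1)+O(1/n^2)$ precisely for the upper detour, confirms this is the right choice.) With that one justification supplied, your proof is complete.
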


Recall that the \emph{Lobachevsky function} $\Lambda: \RR \to \RR$ is defined by
$$ \Lambda(\theta) = - \int_0^{\theta} \log|2\sin t| dt .$$
It is well-known that the Lobachevsky function and the classical dilogarithm function are related as follows.
\begin{proposition}\label{2loba} For $\theta \in [0,\pi]$,
$$\mathrm{li}_2 \Big(e^{2\sqrt{-1}\theta}\Big)
= \frac{\pi^2}{6} - \theta(\pi-\theta) + 2\sqrt{-1}\Lambda(\theta).$$
In particular, $\mathrm{Im} \Big( \mathrm{li}_2 \Big(e^{2\sqrt{-1}\theta}\Big) \Big) = 2\Lambda(\theta)$ for $\theta \in [0,\pi]$.
\end{proposition}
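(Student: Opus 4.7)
The plan is to verify the identity on the open interval $\theta \in (0,\pi)$ by differentiating both sides and matching a single boundary value, then extending to the closed interval by continuity. Write $f(\theta) = \mathrm{li}_2(e^{2\sqrt{-1}\theta})$ and $g(\theta) = \frac{\pi^2}{6} - \theta(\pi-\theta) + 2\sqrt{-1}\Lambda(\theta)$, both of which are continuous on $[0,\pi]$; note that as $\theta$ traverses $(0,\pi)$ the argument $e^{2\sqrt{-1}\theta}$ stays on the unit circle and avoids the branch cut $[1,\infty)$ of $\mathrm{li}_2$, so $f$ is smooth there.

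For the derivative of the left-hand side, use the integral definition $\mathrm{li}_2(z) = -\int_0^z \frac{\log(1-t)}{t}\,dt$, which gives $\mathrm{li}_2'(z) = -\log(1-z)/z$, and hence by the chain rule
$$f'(\theta) = -\frac{\log\bigl(1-e^{2\sqrt{-1}\theta}\bigr)}{e^{2\sqrt{-1}\theta}} \cdot 2\sqrt{-1}\,e^{2\sqrt{-1}\theta} = -2\sqrt{-1}\,\log\bigl(1-e^{2\sqrt{-1}\theta}\bigr).$$
Next I would factor $1-e^{2\sqrt{-1}\theta} = -2\sqrt{-1}\,e^{\sqrt{-1}\theta}\sin\theta = 2\sin\theta \cdot e^{\sqrt{-1}(\theta-\pi/2)}$, which is valid on $(0,\pi)$ where $\sin\theta>0$, and take the principal branch to obtain $\log\bigl(1-e^{2\sqrt{-1}\theta}\bigr) = \log(2\sin\theta) + \sqrt{-1}(\theta - \pi/2)$. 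Substituting gives $f'(\theta) = 2\theta - \pi - 2\sqrt{-1}\log(2\sin\theta)$.

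For the right-hand side, direct differentiation yields $g'(\theta) = 2\theta - \pi + 2\sqrt{-1}\,\Lambda'(\theta)$, and the defining formula of the Lobachevsky function gives $\Lambda'(\theta) = -\log|2\sin\theta| = -\log(2\sin\theta)$ on $(0,\pi)$. Hence $f'(\theta) = g'(\theta)$ on $(0,\pi)$, so $f-g$ is constant there. To fix the constant, evaluate at the endpoint $\theta = 0$: one has $f(0) = \mathrm{li}_2(1) = \pi^2/6$ (where we approach $z=1$ along the circle from above, agreeing with the classical value), and $g(0) = \pi^2/6$ directly. This gives $f\equiv g$ on $[0,\pi)$, and the equality at $\theta = \pi$ follows by continuity of both sides. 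The imaginary-part statement is then immediate, since $\frac{\pi^2}{6} - \theta(\pi-\theta)$ is real.

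The only mildly subtle step is the branch choice when taking the logarithm of $1 - e^{2\sqrt{-1}\theta}$; this is the main place where care is needed, but the explicit polar factorization above resolves it cleanly on $(0,\pi)$, and the endpoints are handled by continuity rather than by a direct limit of the logarithm.
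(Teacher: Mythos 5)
Your proof is correct. The paper itself gives no argument for this proposition --- it records it as a well-known classical identity relating $\mathrm{li}_2$ on the unit circle to the Lobachevsky function (see, e.g., Zagier's survey on the dilogarithm) --- so there is nothing to compare against beyond checking your derivation, which is sound: the derivative computation $f'(\theta)=-2\sqrt{-1}\log\bigl(1-e^{2\sqrt{-1}\theta}\bigr)$ uses the principal branch legitimately, since $\mathrm{Re}\bigl(1-e^{2\sqrt{-1}\theta}\bigr)=2\sin^2\theta>0$ on $(0,\pi)$, your polar factorization $1-e^{2\sqrt{-1}\theta}=2\sin\theta\,e^{\sqrt{-1}(\theta-\pi/2)}$ pins down the argument in $(-\pi/2,\pi/2)$, and the matching with $g'(\theta)=2\theta-\pi-2\sqrt{-1}\log(2\sin\theta)$ is exact. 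The endpoint normalization is also fine: continuity of $\mathrm{li}_2$ at $z=1$ along the closed unit disc follows from the uniformly convergent series $\sum_{k\geq 1}z^k/k^2$, giving $\mathrm{li}_2(1)=\pi^2/6$, while $\Lambda(0)=\Lambda(\pi)=0$ handles the other side, so the constant of integration vanishes and the identity extends to $[0,\pi]$ by continuity; the statement about imaginary parts is then immediate.
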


\section{Explicit Formula of the quantum invariants of surface diffeomorphisms}\label{EFQISD}
Let $\Sigma=\Sigma_{1,1}$ be the once-punctured torus bundle. It is known that the action of the mapping class group $\mathrm{Mod}(\Sigma)$ of $\Sigma$ is determined by the action on the first homology group $\mathrm{H}_{1}(\Sigma,\ZZ) \cong \ZZ^2$. In particular, $\mathrm{Mod}(\Sigma) \cong \mathrm{SL}(2;\ZZ)$. Moreover, every conjugacy class in $ \mathrm{SL}(2;\ZZ)$ admits a representative of the form $\varphi = \pm \varphi_1 \circ \dots \circ \varphi_k$, where each $\varphi_i$ is equal to $L=\begin{pmatrix} 1&1 \\ 0 &1\end{pmatrix}$ or  $R=\begin{pmatrix} 1&0 \\ 1 &1\end{pmatrix}$.

Let $\varphi=LR$ be the surface diffeomorphism and let $M_\varphi$ be the mapping torus given by
$$M_\varphi = \big( \Sigma_{1,1} \times [0,1] \big) / (0, x) \sim (1, \varphi(x)).$$
It is known that $M_\varphi$ is homeomorphic to the figure eight knot complement, since they share the same ideal triangulation \cite{T88, Gue}.
Let $r \in \chi_{\mathrm{PSL}(2;\CC)}(\Sigma_{1,1})$ be a generic $\mathrm{PSL}(2;\CC)$ character with periodic edge weight system (see \cite{BWYI})
$$(a_0, b_0, c_0), (a_1, b_1, c_1), (a_2, b_2, c_2) \in  (\CC^*)^3$$
such that 
\begin{alignat}{3}
a_1 &= b_0^{-1} \qquad  &&b_1= (1+b_0)^{2}a_0 \qquad  &&c_1= (1+b_0)^{-2} b_0^2 c_0 \label{period1}\\
a_2 &= c_1^{-1} \qquad  &&b_2= (1+c_1)^{2}b_1 \qquad  &&c_2= (1+c_1)^{-2} c_1^2 a_1 \label{period2}\\
a_0 &= a_2 \qquad  &&b_0 = b_2 \qquad &&c_0 = c_2.
\end{alignat}
To compute the invariant, we also need to choose logarithms $A_k, B_k, C_k, V_k \in \CC^*$ such that for $k=0,1,2$,
$$ e^{A_k}= a_k, \quad e^{B_k}= b_k, \quad e^{C_k}= c_k, \quad e^{V_k} = 1+a_k^{-1}$$
and
\begin{alignat*}{2}
&A_1= -B_0,  && A_2 = -C_1,\\
&B_1=2V_1+A_0, && B_2 =2V_2+B_1, \\
&C_1=-2V_1 +2B_0 + C_0, \qquad&& C_2 = -2V_2 +2C_1 + A_1.
\end{alignat*}
By the periodicity, there exists integers $\hat{l}_0, \hat{m}_0, \hat{n}_0 \in \ZZ$ such that
$$ A_0 = A_2 + 2\pi\hat{l}_0\sqrt{-1} , \quad B_0 = B_2 + 2\pi\hat{m}_0\sqrt{-1} , \quad C_0 = C_2 + 2\pi\hat{n}_0\sqrt{-1} $$
with $\hat{l}_0+\hat{m}_0+\hat{n}_0=0$.

\begin{proposition}[Proposition 2 \& Equation (1), \cite{BWYII}]\label{Explicitfor}
For the diffeomorphism $\varphi=LR$, let $r$  be the $\varphi$-invariant $\mathrm{PSL}(2;\CC)$-character associated to the periodic edge weight system $(a_0, b_0, c_0)$, $(a_1, b_1, c_1)$, $(a_2, b_2, c_2) \in  (\CC^*)^3$, and let $e^{h_v}=a_0b_0c_0$. For every odd integer $n$, let $\Lambda^q_{\varphi,r, p_v}$ be the intertwiner associated to the data $\varphi$, $r$, $q=e^{\frac{2\pi \sqrt{-1}}{n}}$ and $p_v = e^{\frac{1}{n}h_v} + e^{-\frac{1}{n}h_v}$. Then, up to a multiplication by a scalar with modulus 1,
\begin{align*}
 \mathrm{Trace}\text{ }(\Lambda^q_{\varphi,r, p_v})
&= \frac{1}{n\Big|D^q\Big(qe^{-\frac{A_1}{n}}\Big)\Big|^{\frac{1}{n}}\Big|D^q\Big(qe^{-\frac{A_2}{n}}\Big)\Big|^{\frac{1}{n}}} \\
&\qquad \times \sum_{i_1=1}^n \mathrm{QDL}^q\Big( qe^{-\frac{A_1}{n}}, e^{\frac{V_1}{n}} \Big| 2i_1\Big) q^{2i_1^2-\hat{l}_0 i_1} \\
&\qquad\qquad \times \sum_{i_2=1}^n \mathrm{QDL}^q\Big( qe^{-\frac{A_2}{n}}, e^{\frac{V_2}{n}} \Big| 2i_2 \Big) q^{2i_2^2-\hat{m}_0 i_2}.
\end{align*}
\end{proposition}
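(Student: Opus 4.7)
The plan is to follow the Bonahon--Wong--Yang construction of the intertwiner $\Lambda^q_{\varphi,r,p_v}$, specialized to the once-punctured torus bundle with $\varphi = LR$. Since $L$ and $R$ are the two elementary flips between the two ideal triangulations of $\Sigma_{1,1}$, the diffeomorphism $\varphi = LR$ corresponds to a sequence of two such flips, each of which cycles the edge weight system from $(a_k,b_k,c_k)$ to $(a_{k+1},b_{k+1},c_{k+1})$ via the relations (\ref{period1})--(\ref{period2}). The intertwiner $\Lambda^q_{\varphi,r,p_v}$ accordingly factorizes as a composition $\Lambda_L \circ \Lambda_R$ of the two elementary intertwiners associated to these flips, so the task reduces to writing each elementary intertwiner explicitly and then taking the trace of the product.

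The elementary intertwiners are of ``quantum dilogarithm type,'' as established in the earlier Bonahon--Wong--Yang work. After choosing a basis of the representation space adapted to the triangulation, each elementary intertwiner is given explicitly by a matrix whose entries combine one value $\mathrm{QDL}^q(qe^{-A_k/n}, e^{V_k/n}\,|\,2i_k)$ of the discrete quantum dilogarithm with a quadratic Gauss-type monomial $q^{2i_k^2}$. Computing the trace of the composition then produces a sum over the two flip indices $(i_1,i_2)$, which reassembles into the double sum in the statement. The linear shifts $q^{-\hat{l}_0 i_1}$ and $q^{-\hat{m}_0 i_2}$ in the exponents arise when one passes from a single flip to the composition, because the logarithms $A_k, B_k, C_k, V_k$ chosen at the intermediate triangulation are not exactly periodic under the return to the initial triangulation; the integers $\hat{l}_0,\hat{m}_0,\hat{n}_0$ (subject to $\hat{l}_0+\hat{m}_0+\hat{n}_0 = 0$) precisely record that logarithmic monodromy.

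The overall prefactor $1/\bigl(n\,|D^q(qe^{-A_1/n})|^{1/n}\,|D^q(qe^{-A_2/n})|^{1/n}\bigr)$ comes from the normalization of the irreducible representations attached to each triangulation. Because these representations are unique only up to scalar, one rescales each elementary intertwiner by an $n$-th root of the product $D^q$ from (\ref{defD}) so that the entries are canonically normalized; the factor $n^{-1}$ reflects the dimension reduction coming from the action of the center of the skein algebra, and the modulus on both sides of the identity accounts for the fact that $\Lambda^q_{\varphi,r,p_v}$ itself is defined only up to a unit-modulus scalar, as noted in the statement.

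The main obstacle is the combinatorial bookkeeping of logarithm choices around the periodic cycle $k=0\to 1\to 2 \to 0$: one must verify that the integer shifts produced by tracking $A_k,B_k,C_k,V_k$ through (\ref{period1})--(\ref{period2}) agree exactly with the linear exponents $\hat{l}_0 i_1$ and $\hat{m}_0 i_2$ appearing in the sum, and simultaneously check that the normalization scalars from the two elementary intertwiners combine precisely into the $|D^q|^{1/n}$ denominators displayed. Once this bookkeeping is settled, the formula follows directly by tracing the composed matrix.
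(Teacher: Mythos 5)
First, note that the paper does not prove this proposition at all: it is imported verbatim from \cite{BWYII} (Proposition 2 and Equation (1) there), so there is no internal proof to compare your argument against. Your outline does mirror the strategy of that reference --- decompose $\varphi=LR$ into the two elementary flips, represent each flip by a quantum-dilogarithm-type elementary intertwiner, trace the composition, and keep track of the logarithmic monodromy $(\hat l_0,\hat m_0,\hat n_0)$ and of the normalization --- so at the level of strategy you are pointing at the right construction.

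As a proof, however, the proposal has a genuine gap: every step that actually produces the displayed formula is asserted rather than derived. The trace of a composition $\Lambda_L\circ\Lambda_R$ of two matrices is a coupled double sum $\sum_{i_1,i_2}(\Lambda_L)_{i_1 i_2}(\Lambda_R)_{i_2 i_1}$, whereas the right-hand side of the proposition is a product $\Sigma_1\times\Sigma_2$ of two \emph{independent} single sums; this factorization is a nontrivial consequence of the explicit matrix coefficients of the elementary intertwiners (discrete Fourier/Gauss-sum kernels), which you never write down, and your phrase that the trace ``reassembles into the double sum'' conceals exactly this step. Likewise the evaluation of $\mathrm{QDL}^q$ at the even argument $2i_k$, the quadratic exponents $q^{2i_k^2}$, the precise linear shifts $-\hat l_0 i_1$ and $-\hat m_0 i_2$ (rather than some other combination of $\hat l_0,\hat m_0,\hat n_0$), and the normalization $n\,\big|D^q\big(qe^{-A_1/n}\big)\big|^{1/n}\big|D^q\big(qe^{-A_2/n}\big)\big|^{1/n}$ all depend on those explicit coefficients and on how the chosen puncture weight $p_v=e^{h_v/n}+e^{-h_v/n}$ selects the irreducible representation; the role of $p_v$ does not appear anywhere in your argument, and your heuristic explanation of the factor $n^{-1}$ is not a substitute for the actual normalization computation. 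Deferring all of this to ``combinatorial bookkeeping'' leaves the proof essentially where it started: to make the argument complete you would have to reproduce the computation of \cite{BWYII}, or else simply cite it, as the paper does.
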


\subsection{Computation of $(\tilde l, \tilde m, \tilde n)$}
In the periodic edge weight system with $a_0 b_0 c_0 = e^{h_v} $, we see the phase difference between $(a_2,b_2,c_2)$ and $(a_0,b_0,c_0)$ is captured by a 3-tuple $(\hat{l},\hat{m},\hat{n})$. For our case of periodic edge weight system, with $a_0 b_0 c_0 = e^{h_v + 2\pi m_v^{(n)} \sqrt{-1}}$, we aim to calculate the phase difference by a 3-tuple $(\tilde l, \tilde m, \tilde n)$.

Let $\varphi = LR$ and let $r \in \chi_{\mathrm{PSL}(2;\CC)}(\Sigma_{1,1})$ be the $\varphi$-invariant character associated to the periodic edge weight system $(a_0, b_0, c_0)$, $(a_1, b_1, c_1)$, $(a_2, b_2, c_2) \in  (\CC^*)^3$. We fix $h_v$ such that $e^{h_v}=a_0b_0c_0$.
Let $A_k, B_k, C_k, V_k$ be defined as before, i.e. $a_k=e^{A_k}, b_k = e^{B_k}, c_k=e^{C_k}, e^{V_k} = 1+a_k^{-1}$ with $A_0+B_0+C_0 = h_v$,
$$ A_1 = -B_0, \quad B_2 = 2V_1+A_0, \quad C_1= - 2V_1 + 2B_0 + C_0,$$
$$ A_2= -C_1, \quad B_2 = 2V_2 + B_1, \quad C_2 = -2V_2 + 2C_2 + A_1.$$
Let $(\hat{l}_0, \hat{m}_0, \hat{n}_0)\in \ZZ^3$ with $\hat{l}_0+\hat{m}_0+\hat{n}_0=0$ such that $A_0=A_2 + 2\pi \hat{l}_0\sqrt{-1} $, $B_0=B_2 + 2\pi  \hat{m}_0\sqrt{-1}$ and $C_0=C_2 + 2\pi  \hat{n}_0\sqrt{-1}$.

Now, we modify the puncture weight in such a way that it depends on the parameter $n$. Let $m_{v}^{(n)}\in \{-\frac{n-1}{2}, - \frac{n-3}{2},\dots, \frac{n-1}{2}\}$ be a sequence of integers such that $\lim_{n\to \infty} \frac{4\pi m_{v}^{(n)}}{n}$ exists. We further assume that
$$ \theta_v = \Bigg| \lim_{n\to \infty} \frac{4\pi m_{v}^{(n)}}{n}\Bigg|  \in [0, 2\pi).$$
To simplify the notation, we write $m_v = m_v^{(n)}$. We are going to let $\tilde{A}_k, \tilde{B}_k, \tilde{C}_k$ be the new choices of logarithms. More precisely, let $\tilde{A}_0=A_0 + 2\pi m_v \sqrt{-1}, \tilde{B}_0=B_0, \tilde{C}_0=C_0$ with $\tilde{A}_0+\tilde{B}_0+\tilde{C}_0= h_v + 2\pi m_v\sqrt{-1} $. By direct computation, we have
\begin{align*}
\tilde{A}_1&=-\tilde{B}_0 = -B_0 = A_1  \\
\tilde{B}_1 &= 2V_1 + \tilde{A}_0 = 2V_1 + A_0 + 2\pi m_v \sqrt{-1} = B_1 + 2\pi m_v \sqrt{-1},\\
\tilde{C}_1 &= -2V_1 + 2\tilde{B}_0 + \tilde{C}_0 = -2V_1 + 2B_0 + C_0  = C_1
\end{align*}
and
\begin{align*}
\tilde{A}_2&=-\tilde{C}_1 = -C_1  = A_2  \\
\tilde{B}_2&= 2V_2 + \tilde{B}_1 = 2V_2 + B_1 + 2\pi m_v \sqrt{-1} = B_2+ 2\pi m_v \sqrt{-1},\\
\tilde{C}_2 &= -2V_2 + 2\tilde{C}_1 + \tilde{A}_1 = -2V_2 + 2C_1 + A_1 = C_2
\end{align*}
Since
\begin{align*}
\tilde{A}_0&=A_0 + 2\pi  m_v  \sqrt{-1} = A_2 + 2\pi \sqrt{-1} \hat{l}_0 + 2\pi m_v  \sqrt{-1}= \tilde{A}_2 + 2\pi (\hat{l}_0 + m_v )\sqrt{-1} , \\
\tilde{B}_0&= B_0 = B_2 + 2\pi \hat{m}_0 \sqrt{-1} = \tilde{B}_2 +2\pi ( \hat{m}_0 - m_v)\sqrt{-1},\\
\tilde{C}_0&= C_0 = C_2 + 2\pi \hat{n}_0  \sqrt{-1} = \tilde{C}_2 + 2\pi  \hat{n}_0 \sqrt{-1},
\end{align*}
we have $(\tilde{l}, \tilde{m}, \tilde{n}) = (\hat{l}_0 + m_v, \hat{m}_0 - m_v, \hat{n}_0 ) \in \ZZ^3$ with $\tilde{l}+\tilde{m}+\tilde{n} = \hat{l}_0 +  \hat{m}_0 + \hat{n}_0 = 0$.

\subsection{Explicit formula of $\mathrm{Trace}\text{ } \Lambda_{\varphi, r, p_v}^q $}
By Proposition \ref{Explicitfor}, for the puncture weight $p_v = -e^{(h+2\pi m_v \sqrt{-1} )/n} - e^{-(h+2 m_v\pi \sqrt{-1} )/n}$, we have
\begin{align}\label{Expfor}
\mathrm{Trace} \Lambda_{\varphi, r, p_v}^q = \frac{1}{n\Big|D^q\Big(qe^{-\frac{A_1}{n}}\Big)\Big|^{\frac{1}{n}}\Big|D^q\Big(qe^{-\frac{A_2}{n}}\Big)\Big|^{\frac{1}{n}}} \times \Sigma_1 \times \Sigma_2,
\end{align}
where
\begin{align*}
\Sigma_1 = \sum_{i_1=1}^n \mathrm{QDL}^q\Big( qe^{-\frac{A_1}{n}}, e^{\frac{V_1}{n}} \Big| 2i_1\Big) q^{2i_1^2-\hat{l}_0 i_1 - m_v i_1}
\end{align*}
and
\begin{align*}
\Sigma_2 = \sum_{i_2=1}^n \mathrm{QDL}^q\Big( qe^{-\frac{A_2}{n}}, e^{\frac{V_2}{n}} \Big| 2i_2 \Big) q^{2i_2^2-\hat{m}_0 i_2+ m_v i_2}.
\end{align*}
Define regions $R_{1,1}, R_{1,2}, R_{2,1}, R_{2,2}$ by
\begin{align*}
R_{1,1} &= \Big\{ j \mid j \in \{1,2,\dots, n\},  0 \leq \mathrm{Re}\Big(\frac{\pi}{2} + \frac{A_1}{2ni} - \frac{2\pi j}{n}\Big)< \pi \Big\}\\
R_{1,2} &= \Big\{ j  \mid j \in \{1,2,\dots, n\}, -\pi \leq  \mathrm{Re}\Big(\frac{\pi}{2} + \frac{A_1}{2ni} - \frac{2\pi j}{n}\Big)< 0 \Big\}\\
R_{2,1} &= \Big\{ j \mid j \in \{1,2,\dots, n\},  0 \leq \mathrm{Re}\Big(\frac{\pi}{2} + \frac{A_2}{2ni} - \frac{2\pi j}{n}\Big)< \pi \Big\}\\
R_{2,2} &= \Big\{ j  \mid j \in \{1,2,\dots, n\}, -\pi \leq  \mathrm{Re}\Big(\frac{\pi}{2} + \frac{A_2}{2ni} - \frac{2\pi j}{n}\Big)< 0 \Big\}
\end{align*}
respectively. Let
\begin{align*}
\Sigma_{1,1}
&= \sum_{i_1 \in R_{1,1}} (-1)^{(1-\hat{l}_0 - m_v)i_1}e^{\frac{-i_1 V_1}{n}} \frac{\mathrm{Li}_2^{\frac{2}{n}} \Big(\frac{\pi}{2} + \frac{A_1}{2n\sqrt{-1}} - \frac{2\pi i_1}{n}\Big)}{\mathrm{Li}_2^{\frac{2}{n}}\Big(\frac{\pi}{2} + \frac{A_1}{2n\sqrt{-1}}  \Big)} e^{\frac{\pi \sqrt{-1}}{n} (i_1^2 - \hat{l}_0 i_1 - m_v i_1)},\\
\Sigma_{1,2}
&= \Big(1 +  (\sqrt{-1})^n e^{\frac{A_1}{2}} \Big) \\
&\quad\times  \sum_{i_1 \in R_{1,2}} (-1)^{(1-\hat{l}_0 - m_v)i_1}e^{\frac{-i_1 V_1}{n}} \frac{\mathrm{Li}_2^{\frac{2}{n}} \Big(\frac{3\pi}{2} + \frac{A_1}{2n\sqrt{-1}} - \frac{2\pi i_1}{n} \Big)}{\mathrm{Li}_2^{\frac{2}{n}}\Big(\frac{\pi}{2} + \frac{A_1}{2n\sqrt{-1}}  \Big)} e^{\frac{\pi \sqrt{-1}}{n} (i_1^2 - \hat{l}_0 i_1 - m_v i_1)},\\
\Sigma_{2,1}
&= \sum_{i_2 \in R_{2,1}} (-1)^{(1-\hat{m}_0-m_v)i_2} e^{\frac{-i_2 V_2}{n}} \frac{\mathrm{Li}_2^{\frac{2}{n}} \Big(\frac{\pi}{2} + \frac{A_2}{2n\sqrt{-1}} - \frac{2\pi i_2}{n}\Big)}{\mathrm{Li}_2^{\frac{2}{n}}\Big(\frac{\pi}{2} + \frac{A_2}{2n\sqrt{-1}}  \Big)} e^{\frac{\pi \sqrt{-1}}{n} (i_2^2 - \hat{m}_0 i_2 + m_vi_2)},\\
\Sigma_{2,2}
&= \Big(1+  (\sqrt{-1})^n e^{\frac{A_2}{2}}\Big) \\
&\quad\times \sum_{i_2 \in R_{2,2}} (-1)^{(1-\hat{m}_0-m_v)i_2} e^{\frac{-i_2 V_2}{n}} \frac{\mathrm{Li}_2^{\frac{2}{n}} \Big(\frac{3\pi}{2} + \frac{A_2}{2n\sqrt{-1}} - \frac{2\pi i_2}{n}\Big)}{\mathrm{Li}_2^{\frac{2}{n}}\Big(\frac{\pi}{2} + \frac{A_2}{2n\sqrt{-1}}  \Big)} e^{\frac{\pi \sqrt{-1}}{n} (i_2^2 - \hat{m}_0 i_2 + m_v i_2)}.
\end{align*}
\begin{proposition}\label{Sigmasum0} We have
\begin{align*}
\Sigma_{1} =\Sigma_{1,1} + \Sigma_{1,2} \quad\text{and}\quad
\Sigma_{2} = \Sigma_{2,1} + \Sigma_{2,2}.
\end{align*}
\end{proposition}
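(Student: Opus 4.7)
The plan is to rewrite each summand of $\Sigma_1$ using Proposition \ref{QDL1}, partitioning the summation index according to whether the resulting argument of $\mathrm{Li}_2^{2/n}$ lies in a fundamental domain or requires a shift by $\pi$ via the functional equation of Proposition \ref{QDL2}.

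Concretely, fix $i_1\in\{1,\dots,n\}$ and apply Proposition \ref{QDL1} with $u=qe^{-A_1/n}$, $v=e^{V_1/n}$ and $j=2i_1$; the hypothesis $v^n=1+u^n$ is exactly the relation $e^{V_1}=1+e^{-A_1}$, which holds by the choice of $V_1$. This converts $\mathrm{QDL}^q(qe^{-A_1/n}, e^{V_1/n}\,|\,2i_1)$ into the product of $e^{-2i_1V_1/n}$ with a ratio of two values of $\mathrm{Li}_2^{2/n}$. Combining $e^{-2i_1V_1/n}$ with $q^{2i_1^2-\hat{l}_0 i_1-m_v i_1}=e^{2\pi\sqrt{-1}(2i_1^2-(\hat{l}_0+m_v)i_1)/n}$ and using $q^{n/2}=-1$ to redistribute the half-integer powers of $q$ produces the factor $(-1)^{(1-\hat{l}_0-m_v)i_1}e^{-i_1V_1/n}e^{\pi\sqrt{-1}(i_1^2-\hat{l}_0 i_1-m_v i_1)/n}$ common to the summands of $\Sigma_{1,1}$ and $\Sigma_{1,2}$.

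For $i_1\in R_{1,1}$, the corresponding argument of $\mathrm{Li}_2^{2/n}$ already has real part in $[0,\pi)$ and the summand of $\Sigma_1$ coincides with that of $\Sigma_{1,1}$ with no further modification. For $i_1\in R_{1,2}$, the real part lies in $[-\pi,0)$, and we invoke the functional equation $\mathrm{Li}_2^{2/n}(z)=\Big(1+e^{\sqrt{-1}z/n}\Big)\mathrm{Li}_2^{2/n}(z+\pi)$ of Proposition \ref{QDL2} to shift the argument to $\frac{3\pi}{2}+\frac{A_1}{2n\sqrt{-1}}-\frac{2\pi i_1}{n}$; a careful simplification of the resulting multiplier, using $q^n=1$ and the defining relation $e^{V_1}=1+e^{-A_1}$, shows that it contributes exactly the prefactor $1+(\sqrt{-1})^n e^{A_1/2}$ appearing in $\Sigma_{1,2}$. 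Summing over $R_{1,1}\sqcup R_{1,2}=\{1,\dots,n\}$ then yields $\Sigma_1=\Sigma_{1,1}+\Sigma_{1,2}$.

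The identity $\Sigma_2=\Sigma_{2,1}+\Sigma_{2,2}$ is proved by an entirely analogous argument with $(A_1,V_1,\hat{l}_0)$ replaced by $(A_2,V_2,\hat{m}_0)$ and with the sign of the $m_v$ contribution flipped. The main bookkeeping challenge is the last step of the previous paragraph: verifying that the functional-equation multiplier simplifies to precisely $1+(\sqrt{-1})^n e^{A_k/2}$ (a quantity independent of the summation index) after the rearrangement of exponential factors, since this relies on careful combination of the $2i_k$-reindexing, the identity $q^n=1$, and the relation $e^{V_k}=1+e^{-A_k}$.
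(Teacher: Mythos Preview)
Your approach has a genuine gap in the second paragraph. Applying Proposition~\ref{QDL1} directly with $j=2i_1$ produces the factor $e^{-2i_1V_1/n}$ and an $\mathrm{Li}_2^{2/n}$-argument involving $-\frac{4\pi i_1}{n}$, whereas the summands of $\Sigma_{1,1}$ and $\Sigma_{1,2}$ carry $e^{-i_1V_1/n}$ and an argument involving $-\frac{2\pi i_1}{n}$. Your claimed identity
\[
e^{-2i_1V_1/n}\,q^{2i_1^2-\hat{l}_0 i_1-m_v i_1}
\;=\;(-1)^{(1-\hat{l}_0-m_v)i_1}\,e^{-i_1V_1/n}\,e^{\pi\sqrt{-1}(i_1^2-\hat{l}_0 i_1-m_v i_1)/n}
\]
is simply false (try $i_1=1$): no manipulation of half-integer powers of $q$ can turn $e^{-2i_1V_1/n}$ into $e^{-i_1V_1/n}$, since $V_1$ is an arbitrary complex number unrelated to $q$. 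Likewise, with the $-\frac{4\pi i_1}{n}$ shift the real part of the $\mathrm{Li}_2^{2/n}$-argument sweeps an interval of length $4\pi$ as $i_1$ runs over $\{1,\dots,n\}$, so the two-region partition $R_{1,1}\sqcup R_{1,2}$ (defined via $-\frac{2\pi j}{n}$) does not match.

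The step you are missing is a reindexing that the paper performs \emph{before} invoking Proposition~\ref{QDL1}. Set $\omega=-e^{\pi\sqrt{-1}/n}$ and observe that $q^{2i_1^2-\hat{l}_0 i_1-m_v i_1}=\omega^{(2i_1)^2-\hat{l}_0(2i_1)-m_v(2i_1)}$ (the exponent on the right is even). Since $n$ is odd one has $\omega^n=1$, so the map $j\mapsto \mathrm{QDL}^q(u,v\,|\,j)\,\omega^{j^2-\hat{l}_0 j-m_v j}$ is $n$-periodic; and because $\gcd(2,n)=1$, the values $2i_1$ for $i_1=1,\dots,n$ run through a complete residue system modulo $n$. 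This lets one rewrite $\Sigma_1=\sum_{i_1=1}^n \mathrm{QDL}^q(u,v\,|\,i_1)\,\omega^{i_1^2-\hat{l}_0 i_1-m_v i_1}$. Now $\omega^{i_1^2-\hat{l}_0 i_1-m_v i_1}=(-1)^{(1-\hat{l}_0-m_v)i_1}e^{\pi\sqrt{-1}(i_1^2-\hat{l}_0 i_1-m_v i_1)/n}$ (using $i_1^2\equiv i_1\pmod 2$), and applying Proposition~\ref{QDL1} with $j=i_1$ yields exactly $e^{-i_1V_1/n}$ and the argument $\frac{\pi}{2}+\frac{A_1}{2n\sqrt{-1}}-\frac{2\pi i_1}{n}$. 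From here your third and fourth paragraphs go through as written.
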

\begin{proof} Let $\omega = -e^{\frac{\pi \sqrt{-1}}{n}}$. Since the function $i_1 \mapsto \mathrm{QDL}^q\Big( qe^{\frac{-A_1}{n}}, e^{\frac{V_1}{n}} \Big| i_1\Big) \omega^{i_1^2 - \hat{l}_0 i_1 - m_v i_1}$ is $n$-periodic, we have
\begin{align*}
\Sigma_1
&= \sum_{i_1=1}^n \mathrm{QDL}^q\Big( qe^{-\frac{A_1}{n}}, e^{\frac{V_1}{n}} \Big| 2i_1\Big) q^{2i_1^2-\hat{l}_0 i_1 - m_v i_1}\\
&= \sum_{i_1=1}^n \mathrm{QDL}^q\Big( qe^{-\frac{A_1}{n}}, e^{\frac{V_1}{n}} \Big| 2i_1\Big) \omega^{(2i_1)^2-\hat{l}_0 (2i_1) - m_v (2i_1)}\\
&= \sum_{i_1=1}^n \mathrm{QDL}^q\Big( qe^{-\frac{A_1}{n}}, e^{\frac{V_1}{n}} \Big| i_1\Big) \omega^{i_1^2-\hat{l}_0 i_1 - m_v i_1}
\end{align*}
The formula for $\Sigma_1$ follows from Propositions \ref{QDL2} and \ref{QDL1}. The formula for $\Sigma_2$ follows from a similar computation.
\end{proof}

For $\delta > 0$ small and $s,t\in \{1,2\}$, let
\begin{align*}
R_{s,t}^{\delta} &= \Big\{ \alpha_s \in \CC \mid (1-t)\pi+  \delta \leq \mathrm{Re}\Big(\frac{\pi}{2}-  \alpha_s\Big)< (2-t)\pi-\delta \Big\}.
\end{align*}

Let $C_{s,t}$ be the contour defined by $C_{s,t} = R_{s,t}^{\delta} \cap \RR.$ Let $b_{s,t}:\RR \to [0,1]$ be a bump function with $b_{s,t}(x) = 1$ for $x\in C_{s,t}= R^\delta_{s,t} \cap \RR$, $b_{s,t}(x) =0$ for $x \in \RR \setminus \RR^0_{s,t}$ and $b_{s,t}(x) \in (0,1)$ otherwise. Let $p_1, p_2 \in \{0, 1\}$ such that  $p_1 = 1-\hat{l}_0-m_v  \pmod{2}$ and $p_2 = 1-\hat{m}_0 -m_v \pmod{2}$.

\begin{proposition}\label{Sigmasum} We have
\begin{align*}
\Sigma_{1,1}
&=\frac{n}{2\pi}\sum_{k_1 \in \ZZ} F_{1,1}(k_1)
+ \mathrm{Error} ,\\
\Sigma_{1,2}
&= \Big(1 +  (\sqrt{-1})^n e^{\frac{A_1}{2}}\Big) \times \frac{n}{2\pi}\sum_{k_1 \in \ZZ} F_{1,2}(k_1)
+ \mathrm{Error} ,\\
\Sigma_{2,1}
&=\frac{n}{2\pi}\sum_{k_2 \in \ZZ} F_{2,1}(k_2)
+ \mathrm{Error} ,\\
\Sigma_{2,2}
&= \Big(1 +  (\sqrt{-1})^n e^{\frac{A_2}{2}}\Big) \times \frac{n}{2\pi}\sum_{k_1 \in \ZZ} F_{2,1}(k_2)
+ \mathrm{Error} ,
\end{align*}
where for $s,t\in \{1,2\}$ and $k\in\ZZ$,
\begin{align*}
F_{s,t} (k) = \int_{C_{s,t}} g_s(\alpha_s) e^{\frac{n}{4\pi\sqrt{-1}}  \Big(f_s(\alpha_s; \eta_v)
+ 2p_s\pi  \alpha  -4k\pi  \alpha_s + O(\frac{1}{n})\Big)} d\alpha_s
\end{align*}
with $\eta_v = \frac{2\pi m_v}{n}$,
\begin{align*}
f_{1} (\alpha_1; \eta_v) &=  \mathrm{li}_2 \Big(-e^{- 2\alpha_1\sqrt{-1}}\Big) - \alpha_1^2 + \eta_v \alpha_1 + \frac{\pi^2}{12} ,  \\
g_{1} (\alpha_1) &= e^{-\big(\frac{\hat{l}_0 \sqrt{-1}}{2} + \frac{V_1}{2\pi}\big) \alpha_1} \Big(1+e^{-2\alpha_1\sqrt{-1}} \Big)^{\frac{A_1}{4\pi \sqrt{-1}}},\\
f_{2} (\alpha_2; \eta_v) &=  \mathrm{li}_2 \Big(-e^{- 2\alpha_2\sqrt{-1}}\Big) - \alpha_2^2 - \eta_v \alpha_2 + \frac{\pi^2}{12} ,  \\
g_{2} (\alpha_2) &= e^{-\big(\frac{\hat{m}_0 \sqrt{-1}}{2} + \frac{V_2}{2\pi}\big) \alpha_2 } \Big(1+e^{-2\alpha_2\sqrt{-1}} \Big)^{\frac{A_2}{4\pi \sqrt{-1}}}.
\end{align*}
Moreover, all the error terms satisfy
$ |\mathrm{Error} | = O\Big(e^{\frac{n}{2\pi} \Lambda(2\delta)}\Big).$
\end{proposition}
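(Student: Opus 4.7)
The strategy is the Poisson Summation Formula applied to each $\Sigma_{s,t}$ after smoothing the indicator of its summation range by the bump function $b_{s,t}$, combined with the continuous approximation of the quantum dilogarithm from Proposition \ref{conttoclass}. Since the four cases are parallel, I focus on $\Sigma_{1,1}$; the prefactors $1+(\sqrt{-1})^n e^{A_s/2}$ appearing in $\Sigma_{s,2}$ are inherited from Proposition \ref{Sigmasum0} and pass through the Poisson step unchanged. Let $H_{1,1}(x)$ be the meromorphic extension to $x\in\RR$ of the summand of $\Sigma_{1,1}$, defined by replacing $(-1)^{p_1 i_1}$ with $e^{p_1\pi\sqrt{-1}x}$ and using the meromorphic extension of $\mathrm{Li}_2^{\frac{2}{n}}$ from Proposition \ref{QDL2}. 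Multiplying by the bump function and summing over all integers gives
\[
\Sigma_{1,1} = \sum_{j\in\ZZ} b_{1,1}\bigl(\tfrac{2\pi j}{n}\bigr) H_{1,1}(j) + \mathrm{Error}_1,
\]
where $\mathrm{Error}_1$ collects the contribution from those $j$ for which $b_{1,1}(2\pi j/n)\in(0,1)$. This transition region corresponds, via the identification of Proposition \ref{QDL1}, precisely to the regime $\frac{2\pi(2j)}{n}\approx \pm\frac{\pi}{2}$ in which Proposition \ref{Estbridge} bounds the underlying discrete QDL factor by $O\bigl(e^{\frac{n}{2\pi}\Lambda(2\delta)}\bigr)$. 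Since the other factors are uniformly bounded on this compact region, $|\mathrm{Error}_1| = O\bigl(e^{\frac{n}{2\pi}\Lambda(2\delta)}\bigr)$, with any polynomial $n$-factor from the number of transition integers absorbed.

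Next I apply the Poisson Summation Formula and change variables $\alpha_1 = \frac{2\pi x}{n}$:
\[
\sum_{j\in\ZZ} b_{1,1}\bigl(\tfrac{2\pi j}{n}\bigr) H_{1,1}(j) = \sum_{k\in\ZZ} \frac{n}{2\pi}\int_\RR b_{1,1}(\alpha_1) H_{1,1}\bigl(\tfrac{n\alpha_1}{2\pi}\bigr) e^{-nk\alpha_1\sqrt{-1}}\, d\alpha_1.
\]
Since $b_{1,1}\equiv 1$ on $C_{1,1}$, restricting the integral to $C_{1,1}$ incurs a further error of the same order $O\bigl(e^{\frac{n}{2\pi}\Lambda(2\delta)}\bigr)$, bounded by integrating the transition-region estimate. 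On $C_{1,1}$, Proposition \ref{conttoclass} replaces $\mathrm{li}_2^{\frac{2}{n}}(\frac{\pi}{2}+\frac{A_1}{2n\sqrt{-1}}-\alpha_1)$ by $\mathrm{li}_2(-e^{-2\sqrt{-1}\alpha_1})$ up to a uniform $O(1/n)$ correction, which also absorbs the $\frac{A_1}{2n\sqrt{-1}}$ shift. Collecting into one exponent the quadratic phase $e^{\pi\sqrt{-1} i_1^2/n}$, the linear phases in $\hat{l}_0$ and $m_v$, the factor $e^{-i_1 V_1/n}$, the sign $e^{p_1\pi\sqrt{-1}x}$, the Poisson dual phase $e^{-nk\alpha_1\sqrt{-1}}$, and the $i_1$-independent denominator $\mathrm{Li}_2^{\frac{2}{n}}(\frac{\pi}{2}+\frac{A_1}{2n\sqrt{-1}})$ assembles the integrand into the form $g_1(\alpha_1)\exp\bigl(\tfrac{n}{4\pi\sqrt{-1}}(f_1(\alpha_1;\eta_v)+2p_1\pi\alpha_1-4k\pi\alpha_1 + O(1/n))\bigr)$, matching $F_{1,1}(k)$ exactly.

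The main technical hurdle is the careful bookkeeping of signs, constants, and $n$-dependent shifts so that the exponent assembles into the precise stated form: the quadratic phase $\pi\sqrt{-1}i_1^2/n$ produces the $-\alpha_1^2$ term in $f_1$, the Poisson dual phase produces the $-4k\pi\alpha_1$ shift, the $m_v$-dependent phase produces the $\eta_v\alpha_1$ term, and the $\hat{l}_0$- and $V_1$-dependent factors reorganize into the $i_1$-linear piece absorbed in $g_1(\alpha_1)$. The uniform bound $O\bigl(e^{\frac{n}{2\pi}\Lambda(2\delta)}\bigr)$ on all error terms is essential for the subsequent saddle point analysis in later sections, because the value of $\mathrm{Im}\,f_s$ at the relevant saddle will dominate $\Lambda(2\delta)$ once $\delta$ is chosen sufficiently small.
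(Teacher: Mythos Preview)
Your proposal is correct and follows essentially the same route as the paper: smooth the summation range by the bump function $b_{s,t}$, bound the transition contribution via Proposition~\ref{Estbridge}, apply Poisson summation, substitute $\alpha_s=\tfrac{2\pi i_s}{n}$, and replace $\mathrm{li}_2^{2/n}$ by the classical dilogarithm using Proposition~\ref{conttoclass}. One point of bookkeeping to tighten: the $\tfrac{A_1}{2n\sqrt{-1}}$ shift in the argument of $\mathrm{li}_2^{2/n}$ is \emph{not} simply absorbed into the $O(1/n)$ error---after multiplication by $\tfrac{n}{4\pi\sqrt{-1}}$ its leading Taylor term is exactly the factor $\bigl(1+e^{-2\alpha_1\sqrt{-1}}\bigr)^{A_1/(4\pi\sqrt{-1})}$ appearing in $g_1$, so you should extract it explicitly (as the paper does) rather than declare it negligible and then also claim the integrand matches $g_1$.
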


\begin{proof}
Note that for $i_1=0,1,\dots, n-1$,
\begin{align*}
e^{- \frac{i_1V_1}{n} + \frac{\pi \sqrt{-1}}{n} ( i_1^2 - \hat{l}_0 i_1 - m_v i_1)} = e^{-\big(\frac{\hat{l}_0\sqrt{-1}}{2} + \frac{V_1}{2\pi}\big) \big(\frac{2\pi i_1}{n}\big) -  \frac{n}{4\pi \sqrt{-1}} \Big[\Big(\frac{2\pi i_1}{n}\Big)^2 - \Big(\frac{2\pi m_v}{n}\Big)\Big(\frac{2\pi i_1}{n}\Big)\Big]}.
\end{align*}

Besides,
$$(-1)^{(1-\hat{l}_0 -m_v)i_1}
= e^{\frac{n}{4\pi\sqrt{-1}} (2p_1 \pi ) \Big(\frac{2\pi i_1}{n}\Big) }.
$$

By the Poisson Summation Formula (see e.g. \cite[Theorem 3.1]{SS}), we can write
\begin{align*}
\Sigma_{1,1}
= &\sum_{i_1\in \ZZ} \left[b_{1,1}\Bigg(\frac{2\pi i_1}{n}\Bigg)e^{-\big(\frac{\hat{l}_0\sqrt{-1}}{2} + \frac{V_1}{2\pi}\big) \big(\frac{2\pi i_1}{n}\big) -  \frac{n}{4\pi \sqrt{-1}} \Big[\Big(\frac{2\pi i_1}{n}\Big)^2 - \Big(\frac{2\pi m_v}{n}\Big)\Big(\frac{2\pi i_1}{n}\Big) - (2p_1 \pi ) \Big(\frac{2\pi i_1}{n}\Big)\Big]} \right.\\
&\qquad\qquad \left. \times \frac{\mathrm{Li}_2^{\frac{2}{n}}\Big(\frac{\pi}{2} + \frac{A_1}{2n\sqrt{-1}}  - \frac{2\pi i_1}{n}\Big)}{\mathrm{Li}_2^{\frac{2}{n}}\Big(\frac{\pi}{2} + \frac{A_1}{2n\sqrt{-1}} \Big)} \right]\\
& +\text{Error} \\
=&\sum_{k_1 \in \ZZ} \int_{\RR}  \left[b_{1,1}\Bigg(\frac{2\pi i_1}{n}\Bigg)e^{-\big(\frac{\hat{l}_0\sqrt{-1}}{2} + \frac{V_1}{2\pi}\big) \big(\frac{2\pi i_1}{n}\big) -  \frac{n}{4\pi \sqrt{-1}} \Big[\Big(\frac{2\pi i_1}{n}\Big)^2 - \Big(\frac{2\pi m_v}{n}\Big)\Big(\frac{2\pi i_1}{n}\Big) - (2p_1 \pi ) \Big(\frac{2\pi i_1}{n}\Big)\Big]} \right.\\
&\qquad\qquad\qquad \left.\times \frac{\mathrm{Li}_2^{\frac{2}{n}}\Big(\frac{\pi}{2} + \frac{A_1}{2n\sqrt{-1}}  - \frac{2\pi i_1}{n}\Big)}{\mathrm{Li}_2^{\frac{2}{n}}\Big(\frac{\pi}{2} + \frac{A_1}{2n\sqrt{-1}} \Big)} e^{2\pi k_1  i_1 \sqrt{-1}} \right]di_1 \\
& +\text{Error} .
\end{align*}
By using the substitution $\alpha_1 = \frac{2\pi i_1}{n}$, we have
\begin{align*}
\Sigma_{1,1}
=&\frac{n}{2\pi}\sum_{k_1 \in \ZZ} \int_{C_{1,1}} \Bigg(e^{-\big(\frac{\hat{l}_0 \sqrt{-1}}{2} + \frac{V_1}{2\pi}\big) \alpha_1 - \frac{n}{4\pi \sqrt{-1}} \Big[\alpha_1^2 - \big(\frac{2\pi m_v}{n}\big)\alpha_1 - 2p_1\pi \alpha_1 \Big]} \\
&\qquad \qquad\qquad\qquad \times \frac{\mathrm{Li}_2^{\frac{2}{n}}\Big(\frac{\pi}{2} + \frac{A_1}{2n\sqrt{-1}}  - \alpha_1\Big)}{\mathrm{Li}_2^{\frac{2}{n}}\Big(\frac{\pi}{2} + \frac{A_1}{2n\sqrt{-1}} \Big)} e^{k_1 \alpha_1 \sqrt{-1}} \Bigg) d\alpha_1 \\
&+ \text{Error}.
\end{align*}

For all $\alpha_1 \in R_{1,1}^\delta$, by Proposition \ref{conttoclass},
\begin{align*}
\mathrm{Li}_2^{\frac{2}{n}}\Bigg(\frac{\pi}{2} + \frac{A_1}{2n\sqrt{-1}} - \alpha_1\Bigg)
&= \exp\Bigg(\frac{n}{4\pi \sqrt{-1}} \mathrm{li}_2^{\frac{2}{n}} \Bigg(\frac{\pi}{2}  + \frac{A_1}{2n\sqrt{-1}} - \alpha_1\Bigg) + O\Big(\frac{1}{n}\Big) \Bigg)\\
&= \exp\Big(\frac{n}{4\pi \sqrt{-1}} \mathrm{li}_2 \Bigg(-e^{ \frac{A_1}{n}  - 2\alpha_1 \sqrt{-1}}\Bigg) + O\Big(\frac{1}{n}\Big)\Big)\\
&=  \Big(1+e^{-2\alpha_1\sqrt{-1}} \Big)^{\frac{A_1}{4\pi \sqrt{-1}}} \exp\Bigg(\frac{n}{4\pi \sqrt{-1}} \mathrm{li}_2 \Big(-e^{ - 2\alpha_1\sqrt{-1}}\Big) + O\Big(\frac{1}{n}\Big)\Bigg).
\end{align*}
In particular, by Proposition \ref{2loba} and the fact that $\Lambda(\frac{\pi}{2}) =0$,
\begin{align*}
\mathrm{Li}_2^{\frac{2}{n}}\Bigg(\frac{\pi}{2} + \frac{A_1}{2n\sqrt{-1}} \Bigg)
&=   \exp\Bigg(\frac{n}{4\pi \sqrt{-1}} \mathrm{li}_2 (-1)+ O\Big(\frac{1}{n}\Big)\Bigg)
= \exp\Bigg(\frac{n}{4\pi \sqrt{-1}}  \Bigg(-\frac{\pi^2}{12}\Bigg) + O\Big(\frac{1}{n}\Big)\Bigg).
\end{align*}

As a result, we have
\begin{align*}
\Sigma_{1,1}
=\frac{n}{2\pi}&\sum_{k_1 \in \ZZ} F_{1,1}(k) + \text{Error},
\end{align*}
where $\eta_v =\frac{2\pi m_v}{n}$,
\begin{align*}
F_{1,1}(k) &=  \int_{C_{1,1}} g_1(\alpha_1) e^{\frac{n}{4\pi\sqrt{-1}} \Big(f_1(\alpha_1; \eta_v) + 2p_1 \pi \alpha_1 -4k\pi \alpha_1 + O(\frac{1}{n})\Big)} d\alpha_1 ,\\
f_{1} (\alpha_1; \eta_v) &=  \mathrm{li}_2 \Big(-e^{- 2\alpha_1\sqrt{-1}}\Big) - \alpha_1^2 + \eta_v \alpha_1 + \frac{\pi^2}{12} ,\\
g_{1} (\alpha_1) &= e^{-\big(\frac{\hat{l}_0 \sqrt{-1}}{2} + \frac{V}{2\pi}\big) \alpha_1 } \Big(1+e^{-2\alpha_1\sqrt{-1}} \Big)^{\frac{A_1}{4\pi \sqrt{-1}}} .
\end{align*}

Finally, note that the error consists of terms satisfying the condition in Proposition \ref{Estbridge}. The estimation follows from Proposition \ref{Estbridge}. The computation for $\Sigma_{1,2}, \Sigma_{2,1}$ and $\Sigma_{2,2}$ are similar.
\end{proof}

\section{Geometry of the potential functions}\label{GPF}
\subsection{Geometric triangulation for the hyperbolic cone metric on $\Sigma_\eta$}
Consider the ideal triangulation $\mathcal{T}$ of the once-punctured torus bundle given in \cite{Gue}. Let $z_1,z_2$ be the shape parameters of the ideal tetrahedra and let $\mu_1, \mu_2$ be the curves as shown in Figure \ref{boundary}.
\begin{figure}[t]
\includegraphics[width=15cm]{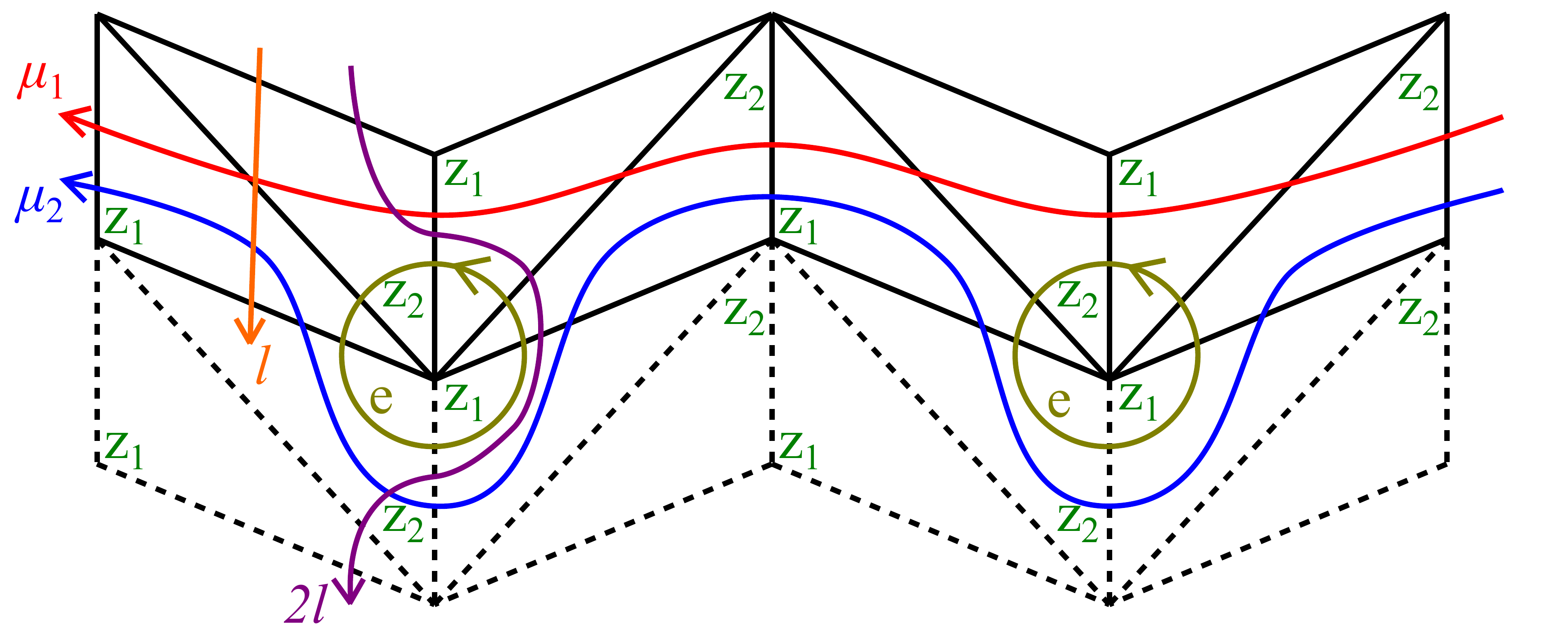}
\caption{Triangulation of the boundary torus. The part drawn by solid lines and the one drawn by dotted lines are copies of the fundamental domain. The red curve $\mu_1$ and the blue curve $\mu_2$ are homotopic curves representing the meridian $\mu_v$. The orange curve represents a curve $l$ that intersects the meridian once. The purple curve represents $2l$.}\label{boundary}
\centering
\end{figure}
Note that the logarithmic holonomies of $\mu_1$ and $\mu_2$ are given by
\begin{align}
 \mathrm{H}(\mu_1) &= 2\big(\log (z_1') - \log(z_1'') \big), \label{Hm1}\\
 \mathrm{H}(\mu_2) &= -2 \big(\log(z_2') - \log(z_2'')  \big) \label{Hm2},
\end{align}
where $z_i'=\frac{1}{1-z_i}$ and $z_i''=1-\frac{1}{z_i}$ for $i=1,2$.
Moreover, the holonomy of the curve $e$ is given by
\begin{align}
\mathrm{H}(e) =  \log (z_1) + 2\log(z_1') + \log(z_2) + 2\log(z_2''). \label{He}
\end{align}
Especially, we have
\begin{align}\label{diffholedge}
 \mathrm{H}(\mu_1)  -  \mathrm{H}(\mu_2)  = 2\big(\mathrm{H}(e) - 2\pi\sqrt{-1}\big).
\end{align}
Note that when $z_1=z_2= e^{\frac{\pi\sqrt{-1}}{3}}$, we have $\mathrm{H}(\mu_1)=\mathrm{H}(\mu_2) =0$ and $\mathrm{H}(e)=2\pi\sqrt{-1}$. The shape parameters $(z_1,z_2)=\big(e^{\frac{\pi\sqrt{-1}}{3}},e^{\frac{\pi\sqrt{-1}}{3}}\big)$ correspond to the complete hyperbolic structure on the once-punctured torus bundle.

For any $\eta_v\in(-\pi,\pi)$, consider the system of equations
\begin{align}\label{coneeq}
\mathrm{H}(\mu_1) = \mathrm{H}(\mu_2) = 2\eta_v \sqrt{-1}.
\end{align}
Note that when $\eta_v=0$, $z_1=z_2 = e^{\frac{\pi\sqrt{-1}}{3}}$ solve (\ref{coneeq}). Let $B=e^{\eta_v\sqrt{-1}}$. To solve (\ref{coneeq}) for $\eta_v\neq 0$, by taking exponential on both side of the equations, we have
\begin{align}
 z_1^2 - (2-B)z_1 + 1  &=0,   \label{coneqexp}\\
z_2^2 - (2-B^{-1})z_2 + 1 &=0 . \label{coneqexp2}
\end{align}
Consider the following solutions of the quadratic equations given by
$$ z_1(\eta_v)= \frac{(2-B) + \sqrt{(2-B)^2 - 4}}{2}  \quad \text{and} \quad z_2(\eta_v) = \frac{(2-B^{-1}) + \sqrt{(2-B^{-1})^2 - 4}}{2}$$
When $\eta_v=0$, $z_1(0)=z_2(0) = e^{\frac{\pi\sqrt{-1}}{3}}$ solve equations (\ref{coneeq}).

\begin{lemma}\label{geotricone}
For $\eta_v \in (-\pi, \pi)$, the ideal triangulation is geometric, i.e. $\mathrm{Im}(z_1(\eta_v)), \mathrm{Im}(z_2(\eta_v)) > 0$. In particular, for any cone angle $\theta = 2|\eta_v| \in [0,2\pi)$, $\Sigma$ admits a hyperbolic cone metric structure along the meridian with cone angle $\theta$.
\end{lemma}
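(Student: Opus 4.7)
The plan is to combine a connectedness argument for the shape parameters with Thurston's gluing construction. The starting point is the observation that both quadratics (\ref{coneqexp}) and (\ref{coneqexp2}) have discriminant $(2-B)^{\pm 1})^2 - 4 = B^{\pm 1}(B^{\pm 1} - 4)$, whose modulus is $|B^{\pm 1}| \cdot |B^{\pm 1}-4| \geq 1 \cdot 3 > 0$ since $|B|=1$. Hence the discriminants are nowhere zero on $(-\pi,\pi)$, and one can pick a continuous branch of the square root so that $z_1(\eta_v)$ and $z_2(\eta_v)$ are continuous functions of $\eta_v\in(-\pi,\pi)$ with $z_1(0)=z_2(0)=e^{\pi\sqrt{-1}/3}$.

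Next I would show that neither $z_1(\eta_v)$ nor $z_2(\eta_v)$ is ever real for $\eta_v \in (-\pi,\pi)$. Indeed, if $z_1(\eta_v^*)\in \mathbb{R}$, then since $z_1\neq 0$, equation (\ref{coneqexp}) forces $B = 2 - z_1 - z_1^{-1} \in \mathbb{R}$; but $B = e^{\eta_v^*\sqrt{-1}}$ is real on $(-\pi,\pi)$ only at $\eta_v^*=0$, where $z_1(0)=e^{\pi\sqrt{-1}/3}\notin\mathbb{R}$. The same argument, applied to (\ref{coneqexp2}) with $B^{-1}$ in place of $B$, rules out $z_2(\eta_v)\in\mathbb{R}$. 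Since $(-\pi,\pi)$ is connected and $\mathrm{Im}(z_i(0))=\sin(\pi/3)>0$, by continuity $\mathrm{Im}(z_1(\eta_v))>0$ and $\mathrm{Im}(z_2(\eta_v))>0$ throughout $(-\pi,\pi)$.

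Having positively oriented shape parameters, I would conclude the geometric statement by verifying the edge equation and reading off the meridian holonomy. The edge equation $\mathrm{H}(e)=2\pi\sqrt{-1}$ is automatic from (\ref{diffholedge}) together with the assumption $\mathrm{H}(\mu_1)=\mathrm{H}(\mu_2)$ in (\ref{coneeq}), so the two ideal tetrahedra with shape parameters $(z_1(\eta_v),z_2(\eta_v))$ glue up consistently around the interior edge. Thurston's developing map construction then produces a hyperbolic structure on the complement of the meridian in $M_\varphi$, and the logarithmic meridian holonomy $\mathrm{H}(\mu_v)=2\eta_v\sqrt{-1}$ is purely imaginary, corresponding to an elliptic rotation by angle $2\eta_v$. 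This extends over the meridian as a cone singularity with cone angle $\theta = 2|\eta_v|$, and as $\eta_v$ varies in $(-\pi,\pi)$ we realize every $\theta \in [0,2\pi)$.

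The only real subtlety is the second step — that no continuous extension of the chosen root crosses the real axis. The discriminant argument above shows that the square-root branch is globally well-defined, and the algebraic constraint linking reality of $z_i$ to reality of $B$ makes the connectedness argument work cleanly, so I expect this to be the main (but mild) obstacle rather than a difficult computation.
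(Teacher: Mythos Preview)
Your proof is correct and follows essentially the same approach as the paper's: both argue that if $z_i(\eta_v)$ were real then $B$ would be real, forcing $\eta_v\in\pi\ZZ$, and then use continuity on the connected interval $(-\pi,\pi)$ together with the value at $\eta_v=0$ to conclude $\mathrm{Im}(z_i(\eta_v))>0$; both then deduce the edge equation from (\ref{diffholedge}) and read off the cone structure. Your explicit discriminant bound $|B^{\pm1}(B^{\pm1}-4)|\geq 3$ is a useful addition that the paper leaves implicit, since it guarantees a globally continuous choice of square-root branch and hence the continuity on which the whole argument rests.
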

\begin{proof}
Assume that $\mathrm{Im}(z_1(\eta_v))=0$ or $\mathrm{Im}(z_2(\eta_v))=0$ for some $\eta_v$. By considering the imaginary part of both sides of (\ref{coneqexp}), we have $\mathrm{Im}(B) = 0$. This can happen only when $\eta_v = k \pi $ for some $k\in \ZZ$. Note that when $\eta_v=0$, $\mathrm{Im}(z_1(0))= \mathrm{Im}(z_2(0)) = \frac{\sqrt{3}}{2} > 0$. Thus, if $\mathrm{Im}(z_1(\eta_v))=0$ or $\mathrm{Im}(z_2(\eta_v))=0$ for some $\eta_v$, we have  $\eta_v = k \pi $ for some $k\in \ZZ \setminus \{0\}$. By continuity, $\mathrm{Im}(z_1(\eta_v)),\mathrm{Im}(z_2(\eta_v))>0$ for $\eta_v \in (-\pi,\pi)$. In particular, since $\mathrm{Im}(z_1(\eta_v))$ and $\mathrm{Im}(z_2(\eta_v))$ are positive for $\eta_v\in(-\pi,\pi)$, $z_1(\eta_v)$ and $z_2(\eta_v)$ do not cross the branch cut of the logarithm for $\eta_v\in(-\pi,\pi)$. By (\ref{diffholedge}) and (\ref{coneeq}), since $(z_1(\eta_v),z_2(\eta_v))$ satifies (i) $\mathrm{Im}(z_1(\eta_v)),\mathrm{Im}(z_2(\eta_v))>0$, (ii) the edge equation $\mathrm{H}(e)=2\pi\sqrt{-1}$ and (iii) the cone angle equation, $\Sigma$ admits a hyperbolic cone metric structure along the meridian with cone angle $\theta_v = 2|\eta_v| \in [0,2\pi)$.
\end{proof}

\subsection{Critical point equations of the potential functions}
Let $\eta_v \in(-\pi,\pi)$. Recall that
\begin{align*}
f_{1}(\alpha_1; \eta_v) &=  \mathrm{li}_2 \Big(-e^{- 2\alpha_1 \sqrt{-1}}\Big) - \alpha_1^2 + \eta_v \alpha_1 + \frac{\pi^2}{12},  \\
f_{2} (\alpha_2; \eta_v) &=  \mathrm{li}_2 \Big(-e^{- 2\alpha_2 \sqrt{-1}}\Big) - \alpha_2^2 - \eta _v \alpha_2+ \frac{\pi^2}{12}.
\end{align*}
Note that the critical point equations of $f_{1}(\alpha_1; \eta_v)$ and $f_{2}(\alpha_2; \eta_v)$ with respect to $\alpha_1,\alpha_2$ are given by
\begin{align}
2\sqrt{-1}\log\Big(1+e^{-2\alpha_1 \sqrt{-1}}\Big) = 2\alpha_1 - \eta_v,  \label{ce1}\\
2\sqrt{-1}\log\Big(1+e^{-2\alpha_2 \sqrt{-1}}\Big) = 2\alpha_2 + \eta_v. \label{ce2}
\end{align}
After taking exponential on both sides, Equations (\ref{ce1}) and (\ref{ce2}) can be written as
\begin{align}
\Big(1- \Big(-e^{-2\alpha_1 \sqrt{-1}}\Big)\Big)^2 &= \Big(-e^{-2\alpha_1\sqrt{-1}}\Big) \Big(-e^{\eta_v\sqrt{-1} }\Big) \label{ceexp1}\\
\Big(1- \Big(-e^{-2\alpha_1 \sqrt{-1}}\Big)\Big)^2 &= \Big(-e^{-2\alpha_1\sqrt{-1}}\Big) \Big(-e^{-\eta_v\sqrt{-1} }\Big) \label{ceexp2}
\end{align}
Write $z_1 = -e^{-2\alpha_1 \sqrt{-1}}$, $z_2 = -e^{-2\alpha_2 \sqrt{-1}}$ and $B=e^{\eta_v \sqrt{-1}}$. Note that Equations (\ref{ceexp1}) and (\ref{ceexp2}) are equivalent to the Equations (\ref{coneqexp}) and (\ref{coneqexp2}). Define $\alpha_1(\eta_v), \alpha_2(\eta_v)$ by
$$ z_1(\eta_v) = -e^{-2\alpha_1(\eta_v)\sqrt{-1}} \quad \text \quad z_2(\eta_v) = -e^{-2\alpha_2(\eta_v)\sqrt{-1}} $$
with $\alpha_1(0) = \alpha_2(0) = \frac{\pi}{3}$. By a direct computation, $(\alpha_1(0),\alpha_2(0))$ solve equations (\ref{ce1}) and (\ref{ce2}) for $\eta_v=0$. By continuity and Lemma \ref{geotricone}, $(\alpha_1(\eta_v), \alpha_2(\eta_v))$ solve (\ref{ce1}) and (\ref{ce2}) for $\eta_v\in(-\pi, \pi)$. Altogether, we have
\begin{lemma}\label{solveeq}
$z_1(\eta_v)$ and $z_2(\eta_v)$ solve Equations (\ref{coneeq}).
\end{lemma}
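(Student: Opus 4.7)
The plan is to lift the exponentiated equations (\ref{coneqexp}) and (\ref{coneqexp2}) back to the logarithmic cone angle equations (\ref{coneeq}) via a continuity argument anchored at the complete hyperbolic structure $\eta_v = 0$. By construction $z_1(\eta_v)$ solves (\ref{coneqexp}) and $z_2(\eta_v)$ solves (\ref{coneqexp2}); since these quadratics are obtained from (\ref{coneeq}) by exponentiation, one automatically has
$$e^{\mathrm{H}(\mu_i)(\eta_v)} = e^{2\eta_v\sqrt{-1}}, \qquad i=1,2,$$
so
$$\frac{\mathrm{H}(\mu_i)(\eta_v) - 2\eta_v\sqrt{-1}}{2\pi\sqrt{-1}} \in \ZZ.$$
It remains to identify this integer as $0$ for every $\eta_v \in (-\pi, \pi)$.

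First I would establish the base case $\eta_v = 0$. At $\eta_v = 0$ we have $B = 1$, and (\ref{coneqexp}) reduces to $z_1^2 - z_1 + 1 = 0$, whose root with positive imaginary part is $z_1(0) = (1+\sqrt{-3})/2 = e^{\pi\sqrt{-1}/3}$, matching the value specified in Lemma \ref{geotricone}. A direct computation yields $z_1'(0) = z_1''(0) = e^{\pi\sqrt{-1}/3}$, so $\mathrm{H}(\mu_1)(0) = 2(\log z_1'(0) - \log z_1''(0)) = 0$ under the principal branch; the analogous computation gives $\mathrm{H}(\mu_2)(0) = 0$. Hence the integer displayed above vanishes at $\eta_v = 0$.

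Next I would verify continuity of $\mathrm{H}(\mu_i)(\eta_v)$ on $(-\pi, \pi)$. By Lemma \ref{geotricone}, $\mathrm{Im}(z_i(\eta_v)) > 0$ throughout this interval. Using $z_i'(\eta_v) = 1/(1-z_i(\eta_v))$ and $z_i''(\eta_v) = 1 - 1/z_i(\eta_v)$, the inequalities $\mathrm{Im}(1 - z_i) < 0$ and $\mathrm{Im}(1/z_i) < 0$ force $\mathrm{Im}(z_i'(\eta_v)) > 0$ and $\mathrm{Im}(z_i''(\eta_v)) > 0$. In particular neither quantity crosses the branch cut of the principal logarithm, so $\log z_i'(\eta_v) - \log z_i''(\eta_v)$ and hence $\mathrm{H}(\mu_i)(\eta_v)$ depend continuously on $\eta_v$. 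The continuous $\ZZ$-valued function $(\mathrm{H}(\mu_i)(\eta_v) - 2\eta_v\sqrt{-1})/(2\pi\sqrt{-1})$ is therefore constant on the connected interval $(-\pi,\pi)$, and by the base case it is identically $0$, giving (\ref{coneeq}).

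The only substantive obstacle is the branch-of-logarithm monodromy, which reduces to checking that $z_i', z_i''$ never wander across the negative real axis as $\eta_v$ varies across $(-\pi, \pi)$. This is precisely what Lemma \ref{geotricone} delivers through its upper-half-plane conclusion, so once that lemma is available the argument consists of a direct computation at $\eta_v = 0$ together with an application of the intermediate value principle to a $\ZZ$-valued continuous function.
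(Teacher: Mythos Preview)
Your proposal is correct and follows essentially the same strategy as the paper: verify the logarithmic equation at $\eta_v = 0$ by direct computation, then use continuity together with Lemma~\ref{geotricone} (the shapes stay in the upper half plane, so no branch cuts are crossed) to propagate this to all $\eta_v \in (-\pi,\pi)$. The paper routes the argument through the critical point equations (\ref{ce1})--(\ref{ce2}) of the potential functions rather than the holonomy formulas (\ref{Hm1})--(\ref{Hm2}) directly, but the underlying continuity-plus-base-case structure is identical.
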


\subsection{Critical values of the potential functions}
The following lemma relates the critical values of $f_1$ and $f_2$ with the hyperbolic volume of mapping torus.
\begin{lemma}\label{imgivevol} Write $\alpha_1 = x_1 + \sqrt{-1} y_1$ and $\alpha_2 = x_2 + \sqrt{-1} y_2$.
The functions $f_{1}(\alpha_1; \eta_v)$ and $f_{2}(\alpha_2; \eta_v)$ satisfy
\begin{align*}
\mathrm{Im} f_{1}(\alpha_1; \eta_v) &= D\Big(-e^{-2\alpha_1\sqrt{-1}}\Big) + y_1 \frac{\partial}{\partial y_1} \mathrm{Im} f_{1}(\alpha_1; \eta_v) ,\\
\mathrm{Im} f_{2}(\alpha_2; \eta_v) &=D\Big(-e^{-2\alpha_2\sqrt{-1}}\Big) + y_1 \frac{\partial}{\partial y_2} \mathrm{Im} f_{2}(\alpha_2; \eta_v),
\end{align*}
where $D(z)$ is the Bloch-Wigner dilogarithm function defined by
$$ D(z) = \mathrm{Im} ( \mathrm{li}_2(z)) + \mathrm{Arg}(1-z)\ln|z|.$$
In particular, we have
\begin{align*}
\mathrm{Im} \Big(f_{1}(\alpha_1(\eta_v); \eta_v) + f_{2}(\alpha_2(\eta_v); \eta_v) \Big) = \mathrm{Vol}(M_{\varphi, \theta_v}),
\end{align*}
where $\theta_v = 2|\eta_v|$.
\end{lemma}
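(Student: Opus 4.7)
The plan is to establish the first identity pointwise by explicit computation, and then observe that the extra term vanishes at the critical points so that the volume statement follows. Set $w_j = -e^{-2\alpha_j\sqrt{-1}}$, so that $|w_j| = e^{2y_j}$ and $\mathrm{Arg}(w_j) \equiv \pi - 2x_j \pmod{2\pi}$; in particular $\partial_{y_j}\mathrm{Arg}(w_j) = 0$. Using the decomposition $\mathrm{Im}\,\mathrm{li}_2(w_j) = D(w_j) - \mathrm{Arg}(1-w_j)\ln|w_j|$, the identity $\mathrm{Im}(\alpha_j^2) = 2x_j y_j$, and the fact that $\eta_v$ is real, I obtain
\begin{align*}
\mathrm{Im}\, f_1(\alpha_1;\eta_v) = D(w_1) + y_1\bigl(-2\mathrm{Arg}(1-w_1) - 2x_1 + \eta_v\bigr),
\end{align*}
and symmetrically for $f_2$ with $-\eta_v$ in place of $\eta_v$.

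The next step is to recognize the bracket as $\partial_{y_1}\mathrm{Im}\, f_1$. I invoke the standard differential formula for the Bloch-Wigner dilogarithm
\begin{align*}
dD(w) = \ln|w|\, d\mathrm{Arg}(1-w) - \ln|1-w|\, d\mathrm{Arg}(w),
\end{align*}
which together with $\partial_{y_1}\mathrm{Arg}(w_1)=0$ and $\ln|w_1|=2y_1$ gives $\partial_{y_1}D(w_1) = 2y_1\,\partial_{y_1}\mathrm{Arg}(1-w_1)$. Differentiating the displayed expression for $\mathrm{Im}\, f_1$ in $y_1$, the $\partial_{y_1}\mathrm{Arg}(1-w_1)$ contributions cancel and what remains is precisely the bracket, which establishes the first identity; the computation for $f_2$ is identical.

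For the volume statement, holomorphicity of $f_j$ in $\alpha_j$ implies that the critical point equation $\partial_{\alpha_j}f_j = 0$ forces $\partial_{y_j}\mathrm{Im}\, f_j = 0$ at $\alpha_j(\eta_v)$. The identity just proved then reduces to $\mathrm{Im}\, f_j(\alpha_j(\eta_v);\eta_v) = D(z_j(\eta_v))$. By Lemmas~\ref{geotricone} and~\ref{solveeq}, $(z_1(\eta_v),z_2(\eta_v))$ are the shape parameters of a geometric ideal triangulation of $M_{\varphi,\theta_v}$, so the sum of the two tetrahedral volumes $D(z_1(\eta_v)) + D(z_2(\eta_v))$ equals $\mathrm{Vol}(M_{\varphi,\theta_v})$.

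The only delicate step I foresee is branch consistency for $\mathrm{Arg}$, since both the decomposition of $\mathrm{Im}\,\mathrm{li}_2$ and the differential of $D$ hold only modulo $2\pi$ ambiguity in $\mathrm{Arg}$. Because Lemma~\ref{geotricone} ensures $\mathrm{Im}(z_j(\eta_v))>0$ throughout $\eta_v\in(-\pi,\pi)$, the relevant $w_j$ stay away from the branch cuts and a single branch choice suffices; the cancellation of $\partial_{y_1}\mathrm{Arg}(1-w_1)$ terms is then unambiguous and the argument carries through.
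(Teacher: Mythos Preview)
Your proof is correct and follows essentially the same route as the paper: both decompose $\mathrm{Im}\,\mathrm{li}_2(w_j)$ via the definition of the Bloch--Wigner dilogarithm, identify the leftover terms as $y_j\,\partial_{y_j}\mathrm{Im} f_j$, and then use that this derivative vanishes at the critical point together with Lemmas~\ref{geotricone} and~\ref{solveeq} to obtain the volume. The only tactical difference is that the paper computes $\partial_{y_1}\mathrm{Im}\,\mathrm{li}_2(w_1)=-2\,\mathrm{Arg}(1-w_1)$ directly from Cauchy--Riemann and $\mathrm{li}_2'(z)=-\log(1-z)/z$, whereas you reach the same identification by differentiating your displayed formula and invoking the differential $dD(w)=\ln|w|\,d\mathrm{Arg}(1-w)-\ln|1-w|\,d\mathrm{Arg}(w)$; the two computations are equivalent and neither is materially shorter.
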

\begin{proof}
Recall that $f_{1} (\alpha_1; \eta_v) =  \mathrm{li}_2 \Big(-e^{- 2\alpha_1 \sqrt{-1}}\Big) - \alpha_1^2 + \eta_v\alpha_1$. Note that
\begin{align*}
\frac{\partial}{\partial \alpha_1}  \mathrm{li}_2 \Big( - e^{- 2\alpha_1 \sqrt{-1}}\Big)
= 2\sqrt{-1} \log\Big(1- \Big( - e^{- 2\alpha_1 \sqrt{-1}}\Big)\Big).
\end{align*}
By the Cauchy-Riemann equation, we have
\begin{align*}
\frac{\partial}{\partial y_1}\Big(  \mathrm{Im} \Big( \mathrm{li}_2 \Big( - e^{- 2\alpha_1 \sqrt{-1}}\Big) \Big)
= \mathrm{Re} \Bigg(\frac{\partial}{\partial \alpha_1}  \mathrm{li}_2 \Big( - e^{- 2\alpha_1 \sqrt{-1}}\Big)\Bigg)
= -2 \mathrm{Arg}\Big(1- \Big( - e^{- 2\alpha_1 \sqrt{-1}}\Big)\Big).
\end{align*}
Hence,
\begin{align*}
\mathrm{Im} \Big( \mathrm{li}_2 \Big( - e^{- 2\alpha_1 \sqrt{-1}}\Big) \Big)
& = D\Big(-e^{-2\alpha_1\sqrt{-1}}\Big) - \mathrm{Arg}\Big(1- \Big( - e^{- 2\alpha_1 \sqrt{-1}}\Big)\Big) \ln \Big| - e^{- 2\alpha_1 \sqrt{-1}} \Big| \\
&= D\Big(-e^{-2\alpha_1\sqrt{-1}}\Big) - 2y_1 \mathrm{Arg}\Big(1- \Big( - e^{- 2\alpha_1 \sqrt{-1}}\Big)\Big)\\
&= D\Big(-e^{-2\alpha_1\sqrt{-1}}\Big) + y_1 \frac{\partial}{\partial y_1}\Big(  \mathrm{Im} ( \mathrm{li}_2 ) \Big)\Big( - e^{- 2\alpha_1 \sqrt{-1}}\Big).
\end{align*}
Besides,
$$ \mathrm{Im}\Bigg(\alpha_1^2 + \eta_v\alpha_1 + \frac{\pi^2}{12}\Bigg) = 2x_1y_1 + \eta_v y_1 = y_1 \frac{\partial}{\partial y_1} \mathrm{Im}\Bigg(\alpha_1^2 + \eta_v\alpha_1+ \frac{\pi^2}{12}\Bigg).
$$
The first two equations follow from a direct computation. In particular, at the critical point, we have
\begin{align*}
\mathrm{Im} \Big(f_{1}(\alpha_1(\eta_v); \eta_v) + f_{2}(\alpha_2(\eta_v); \eta_v) \Big)
&= D\Big(-e^{-2\alpha_1(\eta_v)\sqrt{-1}}\Big) + D\Big(-e^{-2\alpha_2(\eta_v)\sqrt{-1}}\Big) \\
&= \mathrm{Vol}(M_{\varphi, \theta_v}),
\end{align*}
where the last equality follows from Lemma \ref{solveeq}.
\end{proof}

\subsection{Hessians of the potential functions}
Let $\rho_{\theta_v} : \pi_1(M_\varphi) \to \mathrm{PSL}(2;\CC)$ be the holonomy representation of the cone structure on $M_\varphi$ with cone angle $\theta_v=2|\eta_v|$.  Let $\mathrm{Tor}(M_\varphi, \mu_v,  \rho_{\theta_v^{(n)}})$ be the adjoint twisted Reidemeister torsion of $M_\varphi$ with respect to the meridian $\mu_v$ and the holonomy representation $\rho_{\theta_v}$. The following proposition provides a relationship between the second derivative of potential function and the adjoint twisted Reidemeister torsion. Similar phenomenon for once-punctured torus bundles has been observed in \cite{BWYIII}.
\begin{proposition}\label{Hess} We have
$$ f''_{1}(\alpha_1(\eta_v); \eta_v) f''_{2}(\alpha_2(\eta_v); \eta_v)
= \pm 4\mathrm{Tor}(M_\varphi, \mu_v,  \rho_{\theta_v^{(n)}}).$$
Moreover, $ f''_{1}(\alpha_1(\eta_v)) $ and $f''_{2}(\alpha_2(\eta_v))$ are non-zero.
\end{proposition}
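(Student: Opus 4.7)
\medskip
\noindent\textbf{Proof plan for Proposition \ref{Hess}.}

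The plan is to compute the Hessians explicitly in terms of the geometric shape parameters, show that the product equals (up to a universal constant) the determinant of the Neumann--Zagier Jacobian coming from the gluing equations of $M_{\varphi,\theta_v}$, and then invoke the formula relating this Jacobian to the adjoint twisted Reidemeister torsion.

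First I would differentiate $f_1$ and $f_2$ twice. Using $\frac{d}{d\alpha}\mathrm{li}_2\big(-e^{-2\alpha\sqrt{-1}}\big)=2\sqrt{-1}\log\big(1+e^{-2\alpha\sqrt{-1}}\big)$, a direct computation yields
\begin{equation*}
f_i''(\alpha_i;\eta_v)=-\frac{2(1+z_i)}{1-z_i},\qquad z_i=-e^{-2\alpha_i\sqrt{-1}},
\end{equation*}
so that
\begin{equation*}
f_1''(\alpha_1(\eta_v);\eta_v)\,f_2''(\alpha_2(\eta_v);\eta_v)=\frac{4(1+z_1(\eta_v))(1+z_2(\eta_v))}{(1-z_1(\eta_v))(1-z_2(\eta_v))}.
\end{equation*}
The non-vanishing of each $f_i''(\alpha_i(\eta_v);\eta_v)$ is then immediate: by Lemma \ref{geotricone}, $\mathrm{Im}(z_i(\eta_v))>0$, so in particular $z_i(\eta_v)\neq\pm 1$, which rules out both zeros and poles of the expression.

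Next I would connect this product to the Neumann--Zagier Jacobian. By Lemma \ref{solveeq}, $(z_1(\eta_v),z_2(\eta_v))$ satisfies the edge equation $E=\log z_1+2\log z_1'+\log z_2+2\log z_2''=2\pi\sqrt{-1}$ and the meridian/cone-angle equation $M_1=2(\log z_1'-\log z_1'')=2\eta_v\sqrt{-1}$. Using the standard differentiation rules for $\log z_i'$ and $\log z_i''$ in terms of $\log z_i$, one computes
\begin{equation*}
J:=\frac{\partial(E,M_1)}{\partial(\log z_1,\log z_2)}
=\begin{pmatrix} \frac{1+z_1}{z_1-1} & \frac{1+z_2}{1-z_2} \\[2pt] \frac{2(1+z_1)}{z_1-1} & 0 \end{pmatrix},
\end{equation*}
whose determinant is
\begin{equation*}
\det J=\frac{2(1+z_1)(1+z_2)}{(1-z_1)(1-z_2)}=\tfrac12\,f_1''(\alpha_1(\eta_v);\eta_v)\,f_2''(\alpha_2(\eta_v);\eta_v).
\end{equation*}

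The final step is to invoke the formula expressing $\mathrm{Tor}(M_\varphi,\mu_v,\rho_{\theta_v^{(n)}})$ in terms of the Neumann--Zagier Jacobian. For the complete hyperbolic case ($\eta_v=0$) this identification is carried out in \cite{BWYIII}: the adjoint twisted torsion at the discrete faithful representation is, up to sign, $\det J$ divided by a universal factor (here equal to $2$). Since by Lemma \ref{geotricone} the same ideal triangulation remains geometric for every $\eta_v\in(-\pi,\pi)$, and since both $\det J$ and the adjoint torsion depend analytically on the shape parameters, the identity $\mathrm{Tor}(M_\varphi,\mu_v,\rho_{\theta_v})=\pm\tfrac12\det J$ extends by analytic continuation from $\eta_v=0$ to all $\eta_v\in(-\pi,\pi)$. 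Combined with the previous step this gives $f_1''f_2''=\pm 4\,\mathrm{Tor}(M_\varphi,\mu_v,\rho_{\theta_v^{(n)}})$.

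The main obstacle is the last step: verifying that the torsion formula from the complete case carries over to the cone-manifold setting and that the overall normalizing constant is indeed $\tfrac12$. This requires a careful bookkeeping of the chain complexes defining the adjoint torsion, the choice of geometric bases on the cusp torus (reflecting that we are computing the $\mu_v$-torsion, not the $M_\varphi$-torsion), and matching sign conventions between the Neumann--Zagier formalism and the definition of $\mathrm{Tor}$; however, the computation is essentially parallel to \cite{BWYIII}, and all manipulations involved are analytic in $(z_1,z_2)$ on the geometric locus.
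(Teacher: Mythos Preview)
Your computation of $f_i''$ and its product, and the non-vanishing argument via Lemma \ref{geotricone}, are exactly what the paper does. The Jacobian you write down is also essentially the one the paper computes (the paper uses $\partial(\mathrm{H}(e),\mathrm{H}(\mu_2))/\partial(z_1,z_2)$ and then converts via the relation $\mathrm{H}(\mu_1)-\mathrm{H}(\mu_2)=2(\mathrm{H}(e)-2\pi\sqrt{-1})$, arriving at the same determinant up to sign).

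The one place where you diverge is the last step. You propose to pin down the identity $\mathrm{Tor}=\pm\tfrac12\det J$ at the complete structure via \cite{BWYIII} and then propagate it by analytic continuation along the geometric locus. The paper bypasses this entirely: it recognises $\tfrac12\det J$ as the Dimofte--Garoufalidis $1$-loop invariant $\tau(\Sigma,\mathcal T,\mu_v,\rho_\theta)$ (with the combinatorial flattening $(1,0,0)$ on each tetrahedron), and then cites \cite[Section 4.6]{DG}, where the equality $\tau=\pm\mathrm{Tor}$ is verified for the figure-eight knot complement as an identity of rational functions on the deformation variety, hence valid for every $\eta_v$ at once. This is both cleaner and better sourced than your route: you do not need the analytic-continuation argument, you do not need to chase the normalising constant through the torsion chain complex, and you do not need to lean on an in-preparation reference. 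If you want to keep your write-up self-contained, simply replace your final paragraph by the observation that your $\det J$ is (up to sign) the $1$-loop invariant, and invoke \cite{DG}.
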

\begin{proof}
Let $\tau(\Sigma, \mathcal{T}, \mu_v,  \rho_{\theta})$ be the 1-loop invariant defined in \cite{DG} with respect to the ideal triangulation $\mathcal{T}$, the meridian $\mu_v$ and the holonomy representation $\rho_{\theta_v}$. It is known that $\mathrm{Tor}(M_\varphi, \mu_v,  \rho_{\theta_v^{(n)}})$ coincides with $\tau(\Sigma, \mathcal{T}, \mu_v,  \rho_{\theta})$ \cite[Section 4.6]{DG}. We assign shape parameters to the ideal triangulation as shown in Figure \ref{boundary}. Consider the combinatorial flattening $(f_1, f_1', f_1'') = (f_2, f_2', f_2'')= (1,0,0)$.
By the definition of the 1-loop invariant, (\ref{Hm1}), (\ref{Hm2}) and (\ref{He}), we have
\begin{align*}
\tau(\Sigma, \mathcal{T}, \mu_v,  \rho_{\theta})
&= \pm \frac{\mathrm{det}
\begin{pmatrix}
 \frac{\partial}{\partial z_1} \mathrm{H}(e) &  \frac{\partial}{\partial z_2} \mathrm{H}(e) \\
  \frac{\partial}{\partial z_1} \mathrm{H}(\mu_2) &  \frac{\partial}{\partial z_1} \mathrm{H}(\mu_2)
\end{pmatrix}
}{2\prod_{i=1}^2 \xi_i  } \\
&= \pm \frac{z_1z_2}{4} \mathrm{det}\begin{pmatrix}
  \frac{\partial}{\partial z_1} \mathrm{H}(\mu_1) &  \frac{\partial}{\partial z_1} \mathrm{H}(\mu_1)\\
  \frac{\partial}{\partial z_1} \mathrm{H}(\mu_2) &  \frac{\partial}{\partial z_1} \mathrm{H}(\mu_2)
\end{pmatrix} \\
&= \pm \frac{(1+z_1)(1+z_2)}{(1-z_1)(1-z_2)}.
\end{align*}
Besides, by a direct computation,
\begin{align}
f''_{1}(\alpha_1(\eta_v); \eta_v)
&= \frac{4e^{-2\alpha_1(\eta_v)\sqrt{-1}}}{1+e^{-2\alpha_1(\eta_v)\sqrt{-1}}} - 2
= 2\Bigg(\frac{1+z_1}{1-z_1}\Bigg), \label{f''for1}\\
f''_{2}(\alpha_2(\eta_v); \eta_v)
&=  \frac{4e^{-2\alpha_2(\eta_v)\sqrt{-1}}}{1+e^{-2\alpha_2(\eta_v)\sqrt{-1}}} - 2
= 2\Bigg(\frac{1+z_2}{1-z_2}\Bigg), \label{f''for2}
\end{align}
where $z_1 = -e^{-2\sqrt{-1}\alpha_1(\eta_v)}$ and $z_2 = -e^{-2\sqrt{-1}\alpha_2(\eta_v)}$. Thus,
$$ f''_{1}(\alpha_1(\eta_v); \eta_v) f''_{2}(\alpha_2(\eta_v); \eta_v)
= \pm 4\tau(\Sigma, \mathcal{T}, \mu_v,  \rho_{\theta})
= \pm 4\mathrm{Tor}(M_\varphi, \mu_v,  \rho_{\theta}).
$$
Finally, by Lemma \ref{geotricone}, since the triangulation is geometric for all $\eta_v \in (-\pi,\pi)$, by (\ref{f''for1}) and (\ref{f''for2}), we have $f''_{1}(\alpha_1(\eta_v)), f''_{2}(\alpha_2(\eta_v))  \neq 0$. This completes the proof.
\end{proof}

\subsection{Neumann-Zagier potential function}\label{NZPF}
Consider the pair of generators $(\mu_v, l)$ of the fundamental group of the boundary torus of $M_\varphi$.
Recall from \cite{NZ} that there exists a holomorphic function $\Phi(\mathrm{H}(\mu_v))$ defined on a neighborhood of $0\in \CC$ satisfying
$$ \frac{d \Phi(\mathrm{H}(\mu_v))}{d \mathrm{H}(\mu_v)} = \frac{\mathrm{H}(l)}{2} \quad\text{and}\quad \Phi(0)= \sqrt{-1}(\mathrm{Vol}(M_\varphi)+\sqrt{-1}\mathrm{CS}(M_\varphi)),$$
where $\mathrm{Vol}(M_\varphi)$ and $\mathrm{CS}(M_\varphi)$ are the hyperbolic volume and the Chern-Simons invariant of $M_\varphi$ respectively. It is well-known that
$ \mathrm{Vol}(M_\varphi) = 2v_3$, where $v_3$ is the volume of a regular ideal tetrahedron. Besides, $\mathrm{CS}(M_\varphi)=0$ due to the fact that the figure eight knot is isotopic to its mirror image. We observe that the sum of critical values of $f_1$ and $f_2$ is related to the Neumann-Zagier potential function as follows.
\begin{proposition} Let $\mathrm{H}(\mu_v)= 2\eta_v \sqrt{-1}$. We have
$$\Phi(\mathrm{H}(\mu_v))
=f_1(\alpha_1(\eta_v); \eta_v) +  f_2(\alpha_2(\eta_v); \eta_v) .
$$
\end{proposition}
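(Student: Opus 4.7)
The plan is to use the fact that the Neumann-Zagier potential $\Phi$ is uniquely determined by its value at $0$ and its derivative with respect to $\mathrm{H}(\mu_v)$. Setting $W(\eta_v):=f_1(\alpha_1(\eta_v);\eta_v)+f_2(\alpha_2(\eta_v);\eta_v)$ and recalling $\mathrm{H}(\mu_v)=2\eta_v\sqrt{-1}$, it therefore suffices to verify
$$W(0)=\Phi(0)\qquad\text{and}\qquad \frac{dW}{d\mathrm{H}(\mu_v)}=\frac{\mathrm{H}(l)}{2}.$$

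For the initial value, Lemma \ref{solveeq} gives $\alpha_1(0)=\alpha_2(0)=\pi/3$, and the identity $-e^{-2\pi\sqrt{-1}/3}=e^{\pi\sqrt{-1}/3}$ combined with Proposition \ref{2loba} at $\theta=\pi/6$ yields $\mathrm{li}_2(e^{\pi\sqrt{-1}/3})=\frac{\pi^2}{36}+2\sqrt{-1}\Lambda(\pi/6)$. Substituting into $f_1(\pi/3;0)+f_2(\pi/3;0)$, the rational multiples of $\pi^2$ cancel and one obtains $W(0)=4\sqrt{-1}\Lambda(\pi/6)=2\sqrt{-1}v_3=\sqrt{-1}\,\mathrm{Vol}(M_\varphi)$, which equals $\Phi(0)$ because $\mathrm{CS}(M_\varphi)=0$ (the figure eight knot is amphichiral).

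For the derivative, since $\alpha_i(\eta_v)$ is a critical point of $\alpha_i\mapsto f_i(\alpha_i;\eta_v)$ by Lemma \ref{solveeq}, the envelope theorem eliminates the $\alpha_i'(\eta_v)$ terms and gives
$$\frac{dW}{d\eta_v}=\frac{\partial f_1}{\partial\eta_v}\bigg|_{\alpha_1(\eta_v)}+\frac{\partial f_2}{\partial\eta_v}\bigg|_{\alpha_2(\eta_v)}=\alpha_1(\eta_v)-\alpha_2(\eta_v).$$
Using $d\mathrm{H}(\mu_v)=2\sqrt{-1}\,d\eta_v$, this produces $\dfrac{dW}{d\mathrm{H}(\mu_v)}=\dfrac{\alpha_1(\eta_v)-\alpha_2(\eta_v)}{2\sqrt{-1}}$. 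The remaining task is to identify this expression with $\mathrm{H}(l)/2$, equivalently to show $\mathrm{H}(l)=(\alpha_1-\alpha_2)/\sqrt{-1}$. One reads off a formula for $\mathrm{H}(l)$ from the triangulation of the boundary torus in Figure \ref{boundary}, analogous to equations \eqref{Hm1}, \eqref{Hm2} and \eqref{He}, as a signed combination of $\log z_j,\log z_j',\log z_j''$. After substituting $z_j=-e^{-2\alpha_j\sqrt{-1}}$ and using the critical point equations \eqref{ce1} and \eqref{ce2} to rewrite every occurrence of $\log(1+e^{-2\alpha_j\sqrt{-1}})$ as an affine function of $\alpha_j$ and $\eta_v$, the expression is expected to collapse to $(\alpha_1-\alpha_2)/\sqrt{-1}$.

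The principal difficulty lies in this last identification: one must extract the exact edge-weight formula for $\mathrm{H}(l)$ from the figure and track the branches of the logarithms globally along the deformation $\eta_v\in(-\pi,\pi)$. The integer ambiguities (multiples of $2\pi\sqrt{-1}$) are pinned down by continuity from $\eta_v=0$, where $\alpha_1(0)=\alpha_2(0)$ forces $\mathrm{H}(l)|_{\eta_v=0}=0$ consistent with the complete structure, and by Lemma \ref{geotricone}, which guarantees that $z_j(\eta_v)$ never crosses the branch cut of the logarithm along the deformation. The consistency relation \eqref{diffholedge} between $\mathrm{H}(\mu_1)-\mathrm{H}(\mu_2)$ and $\mathrm{H}(e)$ provides an additional sanity check on the signs.
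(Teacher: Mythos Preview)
Your proposal is correct and follows essentially the same strategy as the paper: verify the initial value at $\eta_v=0$ via Proposition~\ref{2loba}, then match the derivative using the critical-point (envelope) property and continuity from $\eta_v=0$ to pin down the branch. The one place where the paper is cleaner is precisely your ``principal difficulty'': rather than reading $\mathrm{H}(l)$ from the figure in terms of $z_j,z_j',z_j''$ and simplifying with \eqref{ce1}--\eqref{ce2}, the paper converts $\alpha_1(\eta_v)-\alpha_2(\eta_v)$ directly into $\tfrac{1}{4}(\log z_2-\log z_1+2k\pi\sqrt{-1})$ via the defining relation $z_j=-e^{-2\alpha_j\sqrt{-1}}$, fixes $k=0$ by continuity exactly as you propose, and then reads off from Figure~\ref{boundary} that the purple curve $2l$ has $\mathrm{H}(2l)=\log z_2-\log z_1$, so no appeal to the critical-point equations is needed at this step.
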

\begin{proof}
By Chain rule,
\begin{align*}
&\frac{\partial}{\partial \mathrm{H}(\mu_v)} (f_1(\alpha_1(\eta_v); \eta_v) +  f_2(\alpha_2(\eta_v); \eta_v))\\
=& \frac{1}{2\sqrt{-1}} \frac{\partial}{\partial \eta_v} (f_1(\alpha_1(\eta_v); \eta_v) +  f_2(\alpha_2(\eta_v); \eta_v))\\
=&\frac{1}{2\sqrt{-1}}\Bigg( \frac{\partial f_1}{\partial \alpha_1} (\alpha_1(\eta_v); \eta_v) + \frac{\partial f_2}{\partial \alpha_2} (\alpha_2(\eta_v); \eta_v)
+ \frac{\partial f_1}{\partial \eta_v} (\alpha_1(\eta_v); \eta_v) + \frac{\partial f_2}{\partial \eta_v} (\alpha_2(\eta_v); \eta_v) \Bigg)\\
=& \frac{1}{2\sqrt{-1}} \Bigg(\frac{\partial f_1}{\partial \eta_v} (\alpha_1(\eta_v); \eta_v) + \frac{\partial f_2}{\partial \eta_v} (\alpha_2(\eta_v); \eta_v) \Bigg) \\
=& \frac{1}{2\sqrt{-1}} ( \alpha_1(\eta_v) - \alpha_2(\eta_v) ) \\
=& \frac{1}{4} ( \log z_2- \log z_1 + 2k\pi\sqrt{-1}),
\end{align*}
where $k\in\ZZ$ and  the third equality follows from the facts that $\alpha_1(\eta_v)$ and $\alpha_2(\eta_v)$ are critical points of $f_1$ and $f_2$ respectively. When $\eta_v=0$, we have $\alpha_1(\eta_v) = \alpha_2(\eta_v) = \frac{\pi}{3}$. By continuity and Lemma \ref{geotricone}, we can conclude that $k=0$. Hence,
$$\frac{\partial}{\partial \mathrm{H}(\mu_v)}  (f_1(\alpha_1(\eta_v); \eta_v) +  f_2(\alpha_2(\eta_v); \eta_v))= \frac{1}{4} (\log z_2- \log z_1).$$
From Figure \ref{boundary}, the holonomy of the purple curve $2l$ is given by
$$ \mathrm{H}(2l)  = \log z_2 - \log z_1.$$
Altogether, we have
\begin{align*}
\frac{\partial}{\partial \mathrm{H}(\mu_v)} (f_1(\alpha_1(\eta_v); \eta_v) +  f_2(\alpha_2(\eta_v); \eta_v))
= \frac{\mathrm{H}(2l)}{4} = \frac{\mathrm{H}(l)}{2}.
\end{align*}

Finally, note that by Proposition \ref{2loba}, a direct computation shows that
$$\Phi(0)
= f_1\Big(\frac{\pi}{3};0 \Big)+ f_2\Big(\frac{\pi}{3};0\Big)
= \sqrt{-1}(\mathrm{Vol}(M_\varphi)+\sqrt{-1}\mathrm{CS}(M_\varphi)).$$
This completes the proof.
\end{proof}

\section{Asymptotics of the leading Fourier coefficients}\label{ALFC}

We first choose a sufficiently small $\delta>0$ for the estimation.
\begin{lemma}\label{assumpvol} For each $eta_v \in (-\pi,\pi)$, there exists a $\delta>0$ such that
\begin{align*}
2\Lambda(2\delta) <
\min\{\mathrm{Im} f_{1}(\alpha_1(\eta_v); \eta_v),\mathrm{Im} f_{2}(\alpha_2(\eta_v); \eta_v)\}.
\end{align*}
\end{lemma}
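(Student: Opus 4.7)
The plan is to evaluate the two imaginary parts at the critical points explicitly, show both are strictly positive, and then use the continuity of the Lobachevsky function at $0$ to pick $\delta$ small enough.

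First I would exploit the fact that $\alpha_1(\eta_v)$ and $\alpha_2(\eta_v)$ are \emph{holomorphic} critical points of $f_1$ and $f_2$. For a holomorphic function $f$ of a complex variable $\alpha = x + \sqrt{-1}\,y$, the Cauchy--Riemann equations imply that if $\partial f/\partial \alpha = 0$ then both $\partial \mathrm{Im}\,f/\partial x$ and $\partial \mathrm{Im}\,f/\partial y$ vanish. Applying this to $f_1$ at $\alpha_1(\eta_v)$ and substituting into the first identity of Lemma \ref{imgivevol} gives
\begin{equation*}
\mathrm{Im}\, f_{1}(\alpha_1(\eta_v); \eta_v) = D\!\left(-e^{-2\alpha_1(\eta_v)\sqrt{-1}}\right) = D(z_1(\eta_v)),
\end{equation*}
and similarly $\mathrm{Im}\, f_{2}(\alpha_2(\eta_v); \eta_v) = D(z_2(\eta_v))$.

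Next I would use Lemma \ref{geotricone}: for $\eta_v \in (-\pi, \pi)$ the shape parameters $z_1(\eta_v), z_2(\eta_v)$ have strictly positive imaginary part. Since the Bloch--Wigner function $D(z)$ equals the hyperbolic volume of an ideal tetrahedron with shape parameter $z$, it is strictly positive on the upper half plane. Hence both $D(z_1(\eta_v))$ and $D(z_2(\eta_v))$ are strictly positive real numbers, and in particular
\begin{equation*}
M := \min\{\mathrm{Im}\, f_{1}(\alpha_1(\eta_v); \eta_v),\, \mathrm{Im}\, f_{2}(\alpha_2(\eta_v); \eta_v)\} > 0.
\end{equation*}

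Finally, the Lobachevsky function $\Lambda(\theta) = -\int_0^\theta \log|2\sin t|\, dt$ is continuous on $\RR$ with $\Lambda(0) = 0$, so $\lim_{\delta \to 0^+} 2\Lambda(2\delta) = 0$. Therefore there exists $\delta > 0$ sufficiently small with $2\Lambda(2\delta) < M$, which is the desired inequality. There is no real obstacle here; the only subtlety is making sure that the partial-derivative term in Lemma \ref{imgivevol} really vanishes at the critical point, which is immediate from holomorphicity.
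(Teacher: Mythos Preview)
Your proof is correct and follows essentially the same route as the paper: both identify $\mathrm{Im}\,f_i(\alpha_i(\eta_v);\eta_v)$ with $D(z_i(\eta_v))$ via Lemma~\ref{imgivevol} (the derivative term vanishing at the critical point), invoke Lemma~\ref{geotricone} to get strict positivity, and then use $\Lambda(0)=0$ plus continuity to find $\delta$. Your write-up is simply more explicit about the Cauchy--Riemann step than the paper's terse version.
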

\begin{proof}
By Proposition \ref{geotricone}, we have
\begin{align*}
\min\{\mathrm{Im} f_{1}(\alpha_1(\eta_v); \eta_v),\mathrm{Im} f_{2}(\alpha_2(\eta_v); \eta_v)\}
&= \min\Big\{ D\Big(-e^{-2\alpha_1(\eta)\sqrt{-1}}\Big), D\Big(-e^{-2\alpha_1(\eta)\sqrt{-1}}\Big)\Big\}
> 0 .
\end{align*}
Since $\Lambda(0)=0$, the existence of $\delta>0$ follows from the continuity of $\Lambda$.
\end{proof}

From now on, we let $\delta>0$ be a positive number given in Lemma \ref{assumpvol}.
The following convexity result will be used later to estimate Fourier coefficients.

\begin{lemma}\label{convexity}
Let $\alpha_1 = x_1 + \sqrt{-1}y_1$, $\alpha_2 = x_2 + \sqrt{-1} y_2 $ for $ x_1,x_2,y_1,y_2 \in \RR$. Then,
\\
$\mathrm{Im} f_{1}(\alpha_1; \eta_v) $ and $\mathrm{Im} f_{2}(\alpha_2; \eta_v) $ are
\begin{enumerate}
\item concave up in $y_1$ and $y_2$ on  $0 \leq x_1, x_2 \leq \frac{\pi}{2}$ and $ \pi  \leq x_1,x_2 \leq \frac{3\pi}{2}$, and
\item concave down in $y_1$ and $y_2$ on $- \frac{\pi}{2} \leq x_1, x_2 \leq 0$ and $\frac{\pi}{2}\leq x_1,x_2 \leq \pi$.
\end{enumerate}
Moreover, $\mathrm{Im} f_{1}(\alpha_1; \eta_v) $ and $\mathrm{Im} f_{2}(\alpha_2; \eta_v) $ are
\begin{enumerate}
\item concave down in $x_1$ and $x_2$ on $0 \leq x_1, x_2 \leq \frac{\pi}{2}$ and $ \pi \leq x_1,x_2 \leq \frac{3\pi}{2}$, and
\item concave up in $x_1$ and $x_2$ on $- \frac{\pi}{2} \leq x_1, x_2 \leq 0$ and $\frac{\pi}{2}\leq x_1,x_2 \leq \pi$.
\end{enumerate}
\end{lemma}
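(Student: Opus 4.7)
The plan is to reduce both convexity statements to analyzing the sign of a single real-valued quantity derived from $f_i''$. Since $f_1, f_2$ are holomorphic in $\alpha_1, \alpha_2$ respectively, $\mathrm{Im} f_i$ is harmonic, so the concavity statements in $x_i$ and in $y_i$ are automatically negatives of each other. More precisely, writing $\alpha = x+\sqrt{-1}y$ and applying the Cauchy--Riemann equations to the holomorphic function $f_i'$, one gets
\begin{align*}
\frac{\partial^2}{\partial y^2} \mathrm{Im}\, f_i(\alpha) = -\mathrm{Im}\bigl(f_i''(\alpha)\bigr), \qquad \frac{\partial^2}{\partial x^2} \mathrm{Im}\, f_i(\alpha) = \mathrm{Im}\bigl(f_i''(\alpha)\bigr).
\end{align*}
Thus it suffices to determine the sign of $\mathrm{Im}(f_i''(\alpha_i))$ on the four strips in the lemma.

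Next I would compute $f_i''$ explicitly. Using $\mathrm{li}_2'(z) = -\log(1-z)/z$ one obtains
\begin{align*}
f_1'(\alpha_1) = -2\alpha_1 + \eta_v + 2\sqrt{-1}\log\bigl(1+e^{-2\alpha_1\sqrt{-1}}\bigr),
\end{align*}
and differentiating once more gives
\begin{align*}
f_1''(\alpha_1) = -2 + \frac{4e^{-2\alpha_1\sqrt{-1}}}{1+e^{-2\alpha_1\sqrt{-1}}} = -2\sqrt{-1}\tan\alpha_1,
\end{align*}
with the analogous formula $f_2''(\alpha_2) = -2\sqrt{-1}\tan\alpha_2$. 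Consequently $\mathrm{Im}(f_i''(\alpha_i)) = -2\,\mathrm{Re}(\tan\alpha_i)$, so the sign of each second partial derivative of $\mathrm{Im} f_i$ is controlled by $\mathrm{Re}(\tan\alpha_i)$.

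Finally, the standard identity
\begin{align*}
\tan(x+\sqrt{-1}y) = \frac{\sin 2x + \sqrt{-1}\sinh 2y}{\cos 2x + \cosh 2y}
\end{align*}
yields $\mathrm{Re}(\tan(x+\sqrt{-1}y)) = \dfrac{\sin 2x}{\cos 2x + \cosh 2y}$. The denominator is strictly positive away from the poles of $\tan$ because $\cosh 2y \geq 1 \geq -\cos 2x$ with equality only at poles. Hence $\mathrm{Re}(\tan\alpha_i)$ has the same sign as $\sin(2x_i)$, which is nonnegative on $[0,\pi/2]\cup[\pi,3\pi/2]$ and nonpositive on $[-\pi/2,0]\cup[\pi/2,\pi]$. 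Combining this with the identities in the first paragraph gives all four cases of the lemma immediately.

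There is no serious obstacle here; the only care required is the sign bookkeeping when translating between the partial derivatives of $\mathrm{Im} f_i$ and $\mathrm{Im}(f_i'')$ via Cauchy--Riemann, and the algebraic manipulation turning $-2 + 4e^{-2\alpha\sqrt{-1}}/(1+e^{-2\alpha\sqrt{-1}})$ into $-2\sqrt{-1}\tan\alpha$.
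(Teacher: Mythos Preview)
Your proof is correct and follows essentially the same approach as the paper: both compute the second partial $\partial_{y_i}^2 \mathrm{Im}\,f_i$ explicitly to obtain $\dfrac{2\sin(2x_i)}{\cosh(2y_i)+\cos(2x_i)}$, read off the sign from $\sin(2x_i)$, and then invoke harmonicity of $\mathrm{Im}\,f_i$ for the $x_i$-direction. The only cosmetic difference is that you route the computation through the compact identity $f_i''(\alpha_i)=-2\sqrt{-1}\tan\alpha_i$ and the formula for $\mathrm{Re}\tan(x+\sqrt{-1}y)$, whereas the paper differentiates the real expression $\partial_{y_i}\mathrm{Im}\,f_i = -2\mathrm{Arg}(1+e^{-2\sqrt{-1}x_i+2y_i})-2x_i+\eta_v$ directly.
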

\begin{proof}
By using the Cauchy-Riemann equation, we have
\begin{align*}
\frac{\partial}{\partial y_1}\mathrm{Im} f_{1}(\alpha_1; \eta_v) &= \mathrm{Re} \frac{\partial}{\partial \alpha_1} f_{1}(\alpha_1; \eta_v) = -2\mathrm{Arg}\Big(1+e^{-2x_1\sqrt{-1}+ 2y_1} \Big) - 2x_1 - \eta_v \\
\frac{\partial}{\partial y_2}\mathrm{Im} f_{2}(\alpha_2; \eta_v) &= \mathrm{Re} \frac{\partial}{\partial \alpha_2} f_{2}(\alpha_2; \eta_v) = -2\mathrm{Arg}\Big(1+e^{-2x_2\sqrt{-1}+ 2y_2} \Big) - 2x_2+ \eta_v.
\end{align*}
In particular, we have
\begin{align*}
\frac{\partial^2}{\partial y_1^2}\mathrm{Im} f_{1}(\alpha_1; \eta_v) & = \frac{2\sin(2x_1)}{\cosh(2y_1)+\cos(2x_1)}  \\
\frac{\partial^2}{\partial y_2^2}\mathrm{Im} f_{2}(\alpha_2; \eta_v) &=  \frac{2\sin(2x_2)}{\cosh(2y_2)+\cos(2x_2)}  .
\end{align*}
This proves the first claim. The second claim follows from the fact that $\mathrm{Im} f_{1}(\alpha_1; \eta_v), \mathrm{Im} f_{2}(\alpha_2; \eta_v)$ are harmonic functions.
\end{proof}

We will use the following version of saddle point approximation to obtain the asymptotics of the leading Fourier coefficients.
\begin{proposition}[Proposition 5.1, \cite{WY1}] \label{saddle}
Let $D_{\mathbf z}$ be a region in $\mathbb C^n$ and let $D_{\mathbf a}$ be a region in $\mathbb R^k.$ Let $f(\mathbf z,\mathbf a)$ and $g(\mathbf z,\mathbf a)$ be complex valued functions on $D_{\mathbf z}\times D_{\mathbf a}$  which are holomorphic in $\mathbf z$ and smooth in $\mathbf a.$ For each positive integer $r,$ let $f_r(\mathbf z,\mathbf a)$ be a complex valued function on $D_{\mathbf z}\times D_{\mathbf a}$ holomorphic in $\mathbf z$ and smooth in $\mathbf a.$
For a fixed $\mathbf a\in D_{\mathbf a},$ let $f^{\mathbf a},$ $g^{\mathbf a}$ and $f_r^{\mathbf a}$ be the holomorphic functions  on $D_{\mathbf z}$ defined by
$f^{\mathbf a}(\mathbf z)=f(\mathbf z,\mathbf a),$ $g^{\mathbf a}(\mathbf z)=g(\mathbf z,\mathbf a)$ and $f_r^{\mathbf a}(\mathbf z)=f_r(\mathbf z,\mathbf a).$ Suppose $\{\mathbf a_r\}$ is a convergent sequence in $D_{\mathbf a}$ with $\lim_r\mathbf a_r=\mathbf a_0,$ $f_r^{\mathbf a_r}$ is of the form
$$ f_r^{\mathbf a_r}(\mathbf z) = f^{\mathbf a_r}(\mathbf z) + \frac{\upsilon_r(\mathbf z,\mathbf a_r)}{r^2},$$
$\{S_r\}$ is a sequence of embedded real $n$-dimensional closed disks in $D_{\mathbf z}$ sharing the same boundary and converging to an embedded $n$-dimensional disk $S_0$, and $\mathbf c_r$ is a point on $S_r$ such that $\{\mathbf c_r\}$ is convergent  in $D_{\mathbf z}$ with $\lim_r\mathbf c_r=\mathbf c_0.$ If for each $r$
\begin{enumerate}[(1)]
\item $\mathbf c_r$ is a critical point of $f^{\mathbf a_r}$ in $D_{\mathbf z},$
\item $\mathrm{Re}f^{\mathbf a_r}(\mathbf c_r) > \mathrm{Re}f^{\mathbf a_r}(\mathbf z)$ for all $\mathbf z \in S_r\setminus \{\mathbf c_r\},$
\item the domain $\{\mathbf z\in D_{\mathbf z}\ |\ \mathrm{Re} f^{\mathbf a_r}(\mathbf z) < \mathrm{Re} f^{\mathbf a_r}(\mathbf c_r)\}$ deformation retracts to $S_r\setminus\{\mathbf c_r\},$
\item $|g^{\mathbf a_r}(\mathbf c_r)|$ is bounded from below by a positive constant independent of $r,$
\item $|\upsilon_r(\mathbf z, \mathbf a_r)|$ is bounded from above by a constant independent of $r$ on $D_{\mathbf z},$ and
\item  the Hessian matrix $\mathrm{Hess}(f^{\mathbf a_0})$ of $f^{\mathbf a_0}$ at $\mathbf c_0$ is non-singular,
\end{enumerate}
then
\begin{equation*}
\begin{split}
 \int_{S_r} g^{\mathbf a_r}(\mathbf z) e^{rf_r^{\mathbf a_r}(\mathbf z)} d\mathbf z= \Big(\frac{2\pi}{r}\Big)^{\frac{n}{2}}\frac{g^{\mathbf a_r}(\mathbf c_r)}{\sqrt{(-1)^n \det\mathrm{Hess}(f^{\mathbf a_r})(\mathbf c_r)}} e^{rf^{\mathbf a_r}(\mathbf c_r)} \Big( 1 + O \Big( \frac{1}{r} \Big) \Big).
 \end{split}
 \end{equation*}
\end{proposition}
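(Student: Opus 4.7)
The plan is to run a classical saddle-point / steepest-descent analysis, made uniform in the parameter $\mathbf{a}_r$ and robust to the $O(1/r^2)$ perturbation $\upsilon_r$ of the phase. First I would use conditions (2) and (3) to deform $S_r$ inside the sublevel set $\{\mathbf{z}\in D_{\mathbf{z}} : \mathrm{Re}\,f^{\mathbf{a}_r}(\mathbf{z}) \le \mathrm{Re}\,f^{\mathbf{a}_r}(\mathbf{c}_r)\}$ so that the integration contour passes through $\mathbf{c}_r$ along steepest-descent directions for $-f^{\mathbf{a}_r}$. Since $\partial S_r$ is fixed and the integrand is holomorphic in $\mathbf{z}$, Cauchy's theorem preserves the integral; condition (3) is precisely what ensures there is no topological obstruction to this descent.

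Next I would localize. Fix small $\varepsilon > 0$ and let $B_r$ be the ball of radius $r^{-1/2+\varepsilon}$ around $\mathbf{c}_r$. Using condition (2), the compactness of $S_r$, and the convergences $\mathbf{a}_r \to \mathbf{a}_0$, $\mathbf{c}_r \to \mathbf{c}_0$, I would show that on $S_r \setminus B_r$ one has $\mathrm{Re}\,f^{\mathbf{a}_r}(\mathbf{z}) \le \mathrm{Re}\,f^{\mathbf{a}_r}(\mathbf{c}_r) - \kappa\, r^{-1+2\varepsilon}$ for some $\kappa > 0$ independent of $r$. Combined with the boundedness of $g^{\mathbf{a}_r}$ on a fixed neighborhood of $\mathbf{c}_0$ (from (4) and continuity) and of $\upsilon_r$ (from (5)), the piece of the integral coming from $S_r \setminus B_r$ is bounded by $e^{r \mathrm{Re}\,f^{\mathbf{a}_r}(\mathbf{c}_r) - \kappa r^{2\varepsilon}}$, which is exponentially smaller than the expected $r^{-n/2} e^{r f^{\mathbf{a}_r}(\mathbf{c}_r)}$ leading scale.

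Inside $B_r$ I would apply the holomorphic Morse lemma to the family $f^{\mathbf{a}_r}$. Non-singularity of $\mathrm{Hess}(f^{\mathbf{a}_0})(\mathbf{c}_0)$ plus continuity gives non-singularity of $\mathrm{Hess}(f^{\mathbf{a}_r})(\mathbf{c}_r)$ for large $r$, together with a holomorphic change of coordinates $\mathbf{z} = \Phi_r(\mathbf{w})$ with $\Phi_r(0) = \mathbf{c}_r$ and $f^{\mathbf{a}_r}(\Phi_r(\mathbf{w})) - f^{\mathbf{a}_r}(\mathbf{c}_r) = \tfrac{1}{2}\mathbf{w}^T \mathbf{w}$, whose Jacobian at $0$ satisfies $\det D\Phi_r(0) = [\det \mathrm{Hess}(f^{\mathbf{a}_r})(\mathbf{c}_r)]^{-1/2}$. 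After rotating to a steepest-descent contour where the quadratic phase is $-\tfrac{1}{2}|\mathbf{w}|^2$, Taylor-expanding $g^{\mathbf{a}_r}(\Phi_r(\mathbf{w}))\det D\Phi_r(\mathbf{w})$ at $\mathbf{w}=0$, using $e^{\upsilon_r/r} = 1 + O(1/r)$ from (5), and evaluating the resulting Gaussian integral produces the stated formula; the sign $(-1)^n$ inside the square root reflects the orientation of the steepest-descent contour relative to the real axes in $\mathbf{w}$. The $O(1/r)$ remainder combines the tail outside $B_r$, the quadratic term in the Taylor expansion of $g^{\mathbf{a}_r}\det D\Phi_r$, and the perturbation $\upsilon_r$.

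The main obstacle I expect is making the Morse lemma \emph{uniform} in $r$: the change of coordinates $\Phi_r$ must be defined on a common neighborhood of $\mathbf{c}_0$ independent of $r$, and $\det D\Phi_r(0)$ must depend continuously on $\mathbf{a}_r$ so that the constants hidden in the error estimates do not blow up. This is handled by viewing the Morse lemma itself as an application of the implicit function theorem to the parametrized family $(\mathbf{z}, \mathbf{a}) \mapsto f(\mathbf{z},\mathbf{a})$ near the smooth critical-point locus $\mathbf{a} \mapsto \mathbf{c}(\mathbf{a})$, where non-degeneracy of $\mathrm{Hess}(f^{\mathbf{a}_0})(\mathbf{c}_0)$ makes the construction uniform on a neighborhood of $\mathbf{a}_0$. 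Condition (4) is then what keeps the leading coefficient $g^{\mathbf{a}_r}(\mathbf{c}_r)$ bounded away from zero, preventing accidental cancellation of the leading term.
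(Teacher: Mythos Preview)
The paper does not prove this proposition at all: it is quoted verbatim from \cite{WY1} (Proposition~5.1 there) and used as a black box in the proofs of Propositions~\ref{leading1} and~\ref{leading2}. There is therefore no ``paper's own proof'' to compare your proposal against.

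That said, your outline is the standard approach to parametric steepest descent and is essentially what underlies the result in \cite{WY1}: deform to a descent contour using the topological hypothesis (3), localize near the critical point, apply a (uniform-in-parameter) holomorphic Morse lemma using the non-degeneracy hypothesis (6), and absorb the $O(1/r^2)$ perturbation via (5). Your identification of the uniformity of the Morse chart as the main technical point is correct; this is exactly why the hypotheses are phrased in terms of a convergent family $(\mathbf{a}_r,\mathbf{c}_r,S_r)$ with a non-degenerate limit. If you want to compare in detail, you should consult \cite{WY1} directly rather than the present paper.
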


\begin{proposition}\label{leading1}
\begin{align*}
\int_{C_{1,1}} g_1(\alpha_1) e^{\frac{n}{4\pi\sqrt{-1}} \Big(f_{1}(\alpha_1; \eta_v)
+ O(\frac{1}{n})\Big)} d\alpha_1
&= \frac{2\sqrt{2}\pi}{\sqrt{n}}\frac{g_1(\alpha_1(\eta_v))}{\sqrt{ \sqrt{-1} f_{1}''(\alpha_1(\eta_v))}} e^{\frac{n}{4\pi \sqrt{-1}} f_{1}(\alpha_1(\eta_v); \eta_v)}  \Big( 1 + O \Big( \frac{1}{n} \Big) \Big), \\
 \int_{C_{2,1}} g_2(\alpha_2) e^{\frac{n}{4\pi\sqrt{-1}} \Big(f_{2}(\alpha_2; \eta_v)+ O(\frac{1}{n})\Big)} d\alpha_2 &= \frac{2\sqrt{2}\pi}{\sqrt{n}} \frac{g_2(\alpha_2(\eta_v))}{\sqrt{ \sqrt{-1} f_{2}''(\alpha_2(\eta_v))}} e^{\frac{n}{4\pi \sqrt{-1}} f_{2}(\alpha_2(\eta_v); \eta_v)}  \Big( 1 + O \Big( \frac{1}{n} \Big) \Big).
\end{align*}
\end{proposition}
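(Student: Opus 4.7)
The plan is to apply the saddle point approximation of Proposition \ref{saddle} to each of the two integrals with $r = n$, phase function $\tilde f_s(\alpha_s) = f_s(\alpha_s;\eta_v)/(4\pi\sqrt{-1})$, and amplitude $g_s$. The $O(1/n)$ correction inside the exponent originates from Proposition \ref{conttoclass} via $\mathrm{li}_2^{2/n} = \mathrm{li}_2(e^{2\sqrt{-1}z}) + O(1/n^2)$, so after factoring out $n/(4\pi\sqrt{-1})$ the correction is of the form $\upsilon_n/n^2$ with $\upsilon_n$ uniformly bounded on compact subsets of $\{0 < \mathrm{Re}(z) < \pi\}$, fitting the framework of Proposition \ref{saddle}.

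The algebraic hypotheses are immediate from earlier results: $\alpha_s(\eta_v)$ is a critical point of $\tilde f_s$ by Lemma \ref{solveeq}; the Hessian $\tilde f_s''(\alpha_s(\eta_v)) = f_s''(\alpha_s(\eta_v))/(4\pi\sqrt{-1})$ is nonzero by Proposition \ref{Hess} combined with Lemma \ref{geotricone}; and the amplitude $g_s(\alpha_s(\eta_v))$ is bounded away from zero since the factor $(1 + e^{-2\alpha_s(\eta_v)\sqrt{-1}})^{A_s/(4\pi\sqrt{-1})} = (1 - z_s(\eta_v))^{A_s/(4\pi\sqrt{-1})}$ never vanishes for $\eta_v\in(-\pi,\pi)$.

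The nontrivial work is to construct, for each $s\in\{1,2\}$, a contour $S_n$ sharing the boundary $\{\pm(\pi/2 - \delta)\}$ of $C_{s,1}$, passing through the complex critical point $\alpha_s(\eta_v)$, and along which $\mathrm{Re}(\tilde f_s) = \mathrm{Im}(f_s)/(4\pi)$ attains its strict maximum at $\alpha_s(\eta_v)$. I would take $S_n$ to be the concatenation of two short vertical segments from $\pm(\pi/2 - \delta)$ to height $y_0 = \mathrm{Im}(\alpha_s(\eta_v))$ together with the horizontal segment at height $y_0$ passing through the critical point. From the relation $\alpha_s(\eta_v) = (\pi - \arg z_s(\eta_v))/2 + \sqrt{-1}\log|z_s(\eta_v)|/2$ and Lemma \ref{geotricone}, one has $\mathrm{Re}(\alpha_s(\eta_v)) \in (0,\pi/2)$, so on the right half of the horizontal segment Lemma \ref{convexity} (second statement, case (1)) gives concavity of $\mathrm{Im}(f_s)$ in $x$ and hence, together with the vanishing of $\partial_x\mathrm{Im}(f_s)$ at $\alpha_s(\eta_v)$ from Cauchy--Riemann, a strict maximum there. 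On the complementary left half, convexity in $x$ (same lemma, case (2)) plus the positivity of $\partial_x\mathrm{Im}(f_s)(\sqrt{-1}y_0) = 2\log(1+e^{2y_0})$ forces the maximum to occur at $x = 0$, which is already strictly dominated by the critical value by the previous step. On each vertical segment, Lemma \ref{convexity} (first statement) gives convexity in $y$, so the maximum lies at an endpoint; at the bottom endpoint on the real line, $\mathrm{Im}(f_s) = 2\Lambda(\delta) < 2\Lambda(2\delta) < \mathrm{Im}(f_s(\alpha_s(\eta_v);\eta_v))$ by Lemma \ref{assumpvol}, and the top endpoint was handled by the horizontal analysis. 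Hypothesis (3) of Proposition \ref{saddle} then follows from the standard Morse description of a non-degenerate one-dimensional saddle.

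Plugging into the conclusion of Proposition \ref{saddle} with $n_{\mathrm{dim}} = 1$, the prefactor works out to
\begin{equation*}
\sqrt{\frac{2\pi}{n}}\cdot\frac{1}{\sqrt{-f_s''(\alpha_s(\eta_v))/(4\pi\sqrt{-1})}} = \frac{2\sqrt{2}\pi}{\sqrt{n}}\cdot\frac{1}{\sqrt{\sqrt{-1}\, f_s''(\alpha_s(\eta_v))}},
\end{equation*}
matching the claimed expression. The main obstacle is the strict mountain-pass verification along the whole deformed contour: the convexity information of Lemma \ref{convexity} must be patched across four segments of different concavity types, and the boundary contribution on $\partial C_{s,1}$ must be carefully compared with the critical value via the specific choice of $\delta$ in Lemma \ref{assumpvol}.
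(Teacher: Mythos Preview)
Your overall strategy---deform the contour to pass through $\alpha_s(\eta_v)$, then invoke Proposition~\ref{saddle} with the convexity information of Lemma~\ref{convexity}---is exactly the paper's, and your verification of hypotheses (1), (4), (5), (6) is fine. The gap is in your treatment of the \emph{left} portion of the deformed contour.

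First, on the left vertical segment at $x=-\tfrac{\pi}{2}+\delta$, Lemma~\ref{convexity} gives that $\mathrm{Im}\,f_s$ is concave \emph{down} in $y$ (since $-\tfrac{\pi}{2}\le x\le 0$), not concave up; hence the maximum along that segment need not occur at an endpoint, and your argument there collapses. Second, on the left half of the horizontal segment, convexity in $x$ together with $\partial_x\mathrm{Im}\,f_s>0$ at the \emph{right} endpoint $x=0$ does not force the maximum to be at $x=0$: a convex function on $[a,0]$ with positive derivative at $0$ can still have $h(a)>h(0)$. (Incidentally, your formula $\partial_x\mathrm{Im}\,f_s(\sqrt{-1}y_0)=2\log(1+e^{2y_0})$ omits the term $-2y_0$, though the sign conclusion survives.)

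The paper sidesteps both problems by splitting $C_{s,1}$ at $x=0$ \emph{before} deforming: on the real segment $C_{s,1}\cap\RR_{\le 0}$ one has $\mathrm{Im}\,f_s(\alpha_s;\eta_v)=2\Lambda(\tfrac{\pi}{2}-x)$ by Proposition~\ref{2loba}, and convexity in $x$ on $[-\tfrac{\pi}{2},0]$ bounds this by $\max\{2\Lambda(2\delta),0\}\le 2\Lambda(2\delta)$, which is strictly below the critical value by Lemma~\ref{assumpvol}. Only the right half $[0,\tfrac{\pi}{2}-\delta]$ is then deformed, and on that strip the concavity in $x$ and convexity in $y$ are uniformly of the correct sign, so the mountain-pass check goes through as in your right-half analysis.
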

\begin{proof}
We first divide the contour $C_{1,1}$ into $C_{1,1} \cap \RR_{\leq 0}$ and $C_{1,1} \cap \RR_{\geq 0}$. For $\alpha \in C_{1,1} \cap \RR_{\leq 0}$, by Lemma \ref{assumpvol}, Proposition \ref{2loba}, Lemma \ref{imgivevol} and Lemma \ref{convexity},
$$ \mathrm{Im} f_{1}(\alpha_1; \eta_v) \leq \max\Big\{\mathrm{Im} f_{1}\Big( -\frac{\pi}{2}+\delta; \eta_v\Big), \mathrm{Im} f_{1}(0; \eta_v) \Big\} \leq 2\Lambda (2\delta) <  \mathrm{Im} f_{1}(\alpha_1(\eta_v); \eta_v) . $$
As a result,
$$  \int_{C_{1,1}\cap \RR_{\leq 0}} g_{1}(\alpha_1) e^{\frac{n}{4\pi\sqrt{-1}} \Big(f_{1}(\alpha_1; \eta_v) + O(\frac{1}{n})\Big)} d\alpha_1 = O\Big(e^{\frac{n}{2\pi}(\Lambda(2\delta))}\Big)
= o\Big(e^{\frac{n}{4\pi}(\mathrm{Im} f_{1}(\alpha_1(\eta_v); \eta_v))}\Big).$$
Next, consider the contour $C = C_{1,1}^{\text{top}} \cup C_{1,1}^{\text{side}}$ defined by
\begin{align*}
C_{1,1}^{\text{top}} &=  \Big\{ z=x+\sqrt{-1}y \mid 0 \leq x \leq \frac{\pi}{2}-\delta, y = \mathrm{Im} (\alpha(\eta_v))\Big\} \\
C_{1,1}^{\text{side}} &= \Big\{ z=  t\sqrt{-1}\mathrm{Im} (\alpha(\eta_v)) \mid t\in [0,1]\Big\}  \cup  \Big\{ z= \frac{\pi}{2} - \delta+ t\sqrt{-1}\mathrm{Im} (\alpha(\eta_v)) \mid t\in [0,1] \Big\}.
\end{align*}
By analyticity, we have
\begin{align}\label{integral1}
\int_{C_{1,1}\cap \RR_{\geq 0}} g_{1}(\alpha_1) e^{\frac{n}{4\pi\sqrt{-1}} \Big(f_{1}(\alpha_1; \eta_v) + O(\frac{1}{n})\Big)} d\alpha_1
=\int_{C} g_{1}(\alpha_1) e^{\frac{n}{4\pi\sqrt{-1}} \Big(f_{1}(\alpha_1; \eta_v)  + O(\frac{1}{n})\Big)} d\alpha_1
\end{align}

Now, we apply Proposition~\ref{saddle} to the integral in (\ref{integral1}). We check conditions (1)-(6) below:
\begin{enumerate}
\item By Lemma \ref{geotricone}, we have $\alpha (\eta_v) \in C_{1,1}^{\text{top}}$.
\item By Lemma \ref{convexity}, since $\mathrm{Im} f_{1}$ is concave down in $x_1$ and has a critical point at $\alpha(\eta_v)$, we have
$$\mathrm{Im} f_{1} (\alpha; \eta_v) < \mathrm{Im} f_{1} (\alpha(\eta_v); \eta_v)$$
for all $\alpha_1 \in C_{1,1,\text{top}}  \setminus\{\alpha(\eta_v)\}$. Besides, by Lemma \ref{convexity}, since $\mathrm{Im} f_{1}$ is concave up in $y_1$, we have
$$\mathrm{Im} f_{1} (\alpha; \eta_v) \leq \max \big\{ \mathrm{Im} f_{1} (0; \eta_v), \mathrm{Im} f_{1} \big( \sqrt{-1}\mathrm{Im} (\alpha(\eta_v)) ; \eta_v\big) \big\} < \mathrm{Im} f_{1} (\alpha(\eta_v); \eta_v)$$
for $\alpha \in \{ z=  t\sqrt{-1}\mathrm{Im} (\alpha(\eta_v)) \mid t\in [0,1]\} $ and
$$\mathrm{Im} f_{1} (\alpha; \eta_v) \leq \max \Big\{ \mathrm{Im} f_{1} \Big(\frac{\pi}{2}-\delta; \eta_v\Big), \mathrm{Im} f_{1} \Big(\frac{\pi}{2} - \delta + \sqrt{-1}\mathrm{Im} (\alpha(\eta_v)); \eta_v\Big) \Big\} < \mathrm{Im} f_{1} (\alpha(\eta_v); \eta_v)$$
for $\alpha \in \{ z= \frac{\pi}{2}-\delta+ t\sqrt{-1}\mathrm{Im} (\alpha(\eta_v)) \mid t\in [0,1]\} $.
\item By Lemma \ref{convexity},
$$ \Big\{ \alpha=x+\sqrt{-1}y \mid x\in \Big[0, \frac{\pi}{2}-\delta\Big], y\in \RR, \mathrm{Im}f_1(\alpha;\eta_v) < \mathrm{Im} f_{1} (\alpha(\eta_v); \eta_v)\Big\}$$
deformation retract to $C_{1,1}^{\text{top}}$.
\item By continuity and compactness of $C$,
\begin{align*}
|g_1(\alpha_1)|=\lt|  e^{-(\frac{\hat{l}_0 \sqrt{-1}}{2} + \frac{V}{2\pi}) \alpha_1 } (1+e^{-2\alpha_1\sqrt{-1}} )^{\frac{A_1}{4\pi \sqrt{-1}}} \rt|
\end{align*}
is non-zero and bounded below by a positive constant independent of $n$.

\item By Proposition \ref{conttoclass}, condition (5) holds.
\item By Proposition \ref{Hess}, condition (6) holds.
\end{enumerate}

Altogether, by Proposition \ref{saddle}, we have
\begin{align*}
&\int_{C_{1,1}} g_1(\alpha_1) e^{\frac{n}{4\pi\sqrt{-1}} \Big(f_{1}(\alpha_1; \eta_v)
+ O(\frac{1}{n})\Big)} d\alpha_1\\
=& \Big(\frac{2\pi}{n}\Big)^{\frac{1}{2}} \frac{g_{1}(\alpha_1(\eta_v))}{\sqrt{- \Big(\frac{f}{4\pi\sqrt{-1}}\Big)''(\alpha_1(\eta_v); \eta_v)}} e^{\frac{n}{4\pi \sqrt{-1}} f_{1}(\alpha_1(\eta_v); \eta_v)}  \Big( 1 + O \Big( \frac{1}{n} \Big) \Big).
\end{align*}
The proof for the second integral is similar.
\end{proof}

\begin{proposition}\label{leading2}
\begin{align*}
&\int_{C_{1,2}} g_1(\alpha_1) e^{\frac{n}{4\pi\sqrt{-1}} \Big(f_{1}(\alpha_1; \eta_v)+ 2\pi \alpha_1
+ O(\frac{1}{n})\Big)} d\alpha_1 \\
&=  \frac{2\sqrt{2}\pi}{\sqrt{n}} \frac{g_1(\pi+\alpha_1(\eta_v))}{\sqrt{ \sqrt{-1} f_{1}''(\pi+\alpha_1(\eta_v))}} e^{\frac{n}{4\pi \sqrt{-1}} f_{1}(\pi+\alpha_1(\eta_v); \eta_v)}  \Big( 1 + O \Big( \frac{1}{n} \Big) \Big), \\
& \int_{C_{2,2}} g_2(\alpha_2) e^{\frac{n}{4\pi\sqrt{-1}} \Big(f_{2}(\alpha_2; \eta_v)+ 2\pi \alpha_2 + O(\frac{1}{n})\Big)} d\alpha_2\\
 &=  \frac{2\sqrt{2}\pi}{\sqrt{n}} \frac{g_2(\pi+\alpha_2(\eta_v))}{\sqrt{\sqrt{-1} f_{2}''(\pi+\alpha_2(\eta_v))}} e^{\frac{n}{4\pi \sqrt{-1}} f_{2}(\pi+\alpha_2(\eta_v); \eta_v)}  \Big( 1 + O \Big( \frac{1}{n} \Big) \Big).
\end{align*}
\end{proposition}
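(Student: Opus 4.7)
The plan is to reduce Proposition \ref{leading2} to Proposition \ref{leading1} via the translation $\beta_s = \alpha_s - \pi$ for $s=1,2$. Under $\beta_1 = \alpha_1 - \pi$, the contour $C_{1,2} = (\tfrac{\pi}{2}+\delta, \tfrac{3\pi}{2}-\delta]$ maps bijectively onto $C_{1,1} = (-\tfrac{\pi}{2}+\delta, \tfrac{\pi}{2}-\delta]$. Since $e^{-2(\beta_1+\pi)\sqrt{-1}} = e^{-2\beta_1\sqrt{-1}}$, both $\mathrm{li}_2(-e^{-2\alpha_1\sqrt{-1}})$ and $(1+e^{-2\alpha_1\sqrt{-1}})^{A_1/(4\pi\sqrt{-1})}$ are $\pi$-periodic in $\alpha_1$. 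A direct computation then yields the identities
\begin{align*}
f_1(\alpha_1; \eta_v) + 2\pi\alpha_1 &= f_1(\beta_1; \eta_v) + \pi^2 + \eta_v\pi,\\
g_1(\alpha_1) &= e^{-\left(\frac{\hat{l}_0\sqrt{-1}}{2} + \frac{V_1}{2\pi}\right)\pi} g_1(\beta_1),\\
f_1''(\alpha_1; \eta_v) &= f_1''(\beta_1; \eta_v),
\end{align*}
so the additive constant $\pi^2 + \eta_v\pi$ produces only a phase factor of modulus one that can be absorbed into the overall normalization.

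After extracting these two constant factors, the integral over $C_{1,2}$ reduces to precisely the integral over $C_{1,1}$ whose asymptotics are computed by Proposition \ref{leading1}. The resulting saddle point sits at $\beta_1 = \alpha_1(\eta_v)$, equivalently $\alpha_1 = \pi + \alpha_1(\eta_v)$ in the original variable. Recombining the extracted scalar factors with $g_1(\alpha_1(\eta_v))$ and $f_1''(\alpha_1(\eta_v))$ using the identities displayed above produces the stated formula. The integral over $C_{2,2}$ is handled by the analogous substitution $\beta_2 = \alpha_2 - \pi$ applied to $f_2$, using $e^{-2(\beta_2+\pi)\sqrt{-1}} = e^{-2\beta_2\sqrt{-1}}$ in the same way.

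The only residual check is that the saddle-point hypotheses (1)--(6) of Proposition \ref{saddle} continue to hold on the translated strip $\pi \leq x_1 \leq \tfrac{3\pi}{2}$: the critical point $\pi + \alpha_1(\eta_v)$ lies in the interior of $C_{1,2}$ by Lemma \ref{geotricone} (since $\alpha_1(\eta_v)$ is interior to $C_{1,1}$); the concavity of $\mathrm{Im}\, f_1$ needed for the contour deformation comes from Lemma \ref{convexity}, which exhibits the same concavity pattern on $\pi \leq x_1 \leq \tfrac{3\pi}{2}$ as on $0 \leq x_1 \leq \tfrac{\pi}{2}$; and nondegeneracy of the Hessian at the translated critical point is immediate from Proposition \ref{Hess} and the periodicity identity $f_1''(\pi + \alpha_1(\eta_v)) = f_1''(\alpha_1(\eta_v)) \neq 0$. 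Because the reduction is exact, I do not anticipate a serious obstacle beyond this routine bookkeeping: the substance of the argument is just the translation identity plus one invocation of Proposition \ref{leading1}.
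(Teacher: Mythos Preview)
Your reduction via the translation $\beta_s=\alpha_s-\pi$ is correct and genuinely different from the paper's route. The identities you record,
\[
f_1(\beta_1+\pi;\eta_v)+2\pi(\beta_1+\pi)=f_1(\beta_1;\eta_v)+\pi^2+\eta_v\pi,\qquad g_1(\beta_1+\pi)=e^{-\bigl(\frac{\hat l_0\sqrt{-1}}{2}+\frac{V_1}{2\pi}\bigr)\pi}g_1(\beta_1),
\]
together with $f_1''(\beta_1+\pi)=f_1''(\beta_1)$, exactly convert the $C_{1,2}$ integral into a constant times the $C_{1,1}$ integral already handled in Proposition~\ref{leading1}; the negligible half-interval, the contour deformation, and the verification of the saddle conditions are then inherited wholesale. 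The paper instead repeats the entire argument of Proposition~\ref{leading1} from scratch on the shifted strip: it splits $C_{1,2}$ at $\pi$, bounds the left half by $2\Lambda(2\delta)$ via Lemma~\ref{convexity}, pushes the right half to a contour through $\pi+\alpha_1(\eta_v)$, and re-checks hypotheses (1)--(6) of Proposition~\ref{saddle} directly. Your translation argument is shorter and makes the structural reason for the parallel asymptotic transparent, while the paper's approach is self-contained and does not require invoking Proposition~\ref{leading1} as a black box. One small point: your remark that the constant $\pi^2+\eta_v\pi$ ``can be absorbed into the overall normalization'' is slightly loose, since the statement tracks the full exponent and not just its modulus; but your subsequent sentence about recombining the extracted scalars via the displayed identities to recover $g_1(\pi+\alpha_1(\eta_v))$ and the shifted exponent is the right way to finish, so the argument goes through.
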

\begin{proof}
We first divide the contour $C_{1,2}$ into $C_{1,2} \cap [\frac{\pi}{2}, \pi]$ and $C_{1,2} \cap [\pi, \frac{3\pi}{2}]$. For $\alpha \in C_{1,2} \cap [\frac{\pi}{2}, \pi]$, by Lemma \ref{convexity},
$$ \mathrm{Im} f_{1}(\alpha_1; \eta_v) \leq \max\Big\{f_{1}\Big( \frac{\pi}{2}+\delta ; \eta_v\Big), f_{1}(\pi;\eta_v) \Big\} \leq 2\Lambda (2\delta) < \mathrm{Im} f_{1}(\alpha_1(\eta_v); \eta_v) . $$
As a result,
$$  \int_{C_{1,2}\cap [\frac{\pi}{2},\pi]} g_{1}(\alpha_1) e^{\frac{n}{4\pi\sqrt{-1}} \Big(f_{1}(\alpha_1; \eta_v) + O(\frac{1}{n})\Big)} d\alpha_1 = O\Big(e^{\frac{n}{2\pi}(\Lambda(2\delta))}\Big)
= o\Big(e^{\frac{n}{4\pi}(\mathrm{Im} f_{1}(\alpha_1(\eta_v); \eta_v))}\Big).$$
Next, consider the contour $C = C_{1,2}^{\text{top}} \cup C_{1,2}^{\text{side}}$ defined by
\begin{align*}
C_{1,2}^{\text{top}} &=  \Big\{ z=x+\sqrt{-1}y \mid \pi \leq x \leq \frac{3\pi}{2}-\delta, y = \mathrm{Im} (\alpha(\eta_v))\Big\} \\
C_{1,2}^{\text{side}} &= \Big\{ z= \pi+ t\sqrt{-1}\mathrm{Im} (\alpha(\eta_v)) \mid t\in [0,1]\Big\}  \cup  \Big\{ z= \frac{3\pi}{2} - \delta+ t\sqrt{-1}\mathrm{Im} (\alpha(\eta_v)) \mid t\in [0,1] \Big\}.
\end{align*}
By analyticity, we have
\begin{align}\label{integral2}
\int_{C_{1,2}\cap [\pi,\frac{3\pi}{2}]} g_{1}(\alpha_1) e^{\frac{n}{4\pi\sqrt{-1}} \Big(f_{1}(\alpha_1; \eta_v) + 2\pi\alpha_1 + O(\frac{1}{n})\Big)} d\alpha_1
=\int_{C} g_{1}(\alpha_1) e^{\frac{n}{4\pi\sqrt{-1}} \Big(f_{1}(\alpha_1; \eta_v) + 2\pi\alpha_1 + O(\frac{1}{n})\Big)} d\alpha_1
\end{align}
A direct computation shows that $\alpha (\eta_v) + \pi $ is a critical point of the function $f_{1}(\alpha_1; \eta_v) + 2\pi\alpha_1$ with critical value
$$ f_{1}(\alpha_1(\eta_v) + \pi ; \eta_v) + 2\pi(\alpha_1(\eta_v) + \pi )
= f_{1}(\alpha_1(\eta_v) ; \eta_v) + \pi^2 + 2\pi \eta_v.
$$
In particular, we have
$$ \mathrm{Im}\big( f_{1}(\alpha_1(\eta_v) + \pi ; \eta_v) + 2\pi(\alpha_1(\eta_v) + \pi ) \big)
=  \mathrm{Im}(f_{1}(\alpha_1(\eta_v) ; \eta_v) .$$

Now, we apply Proposition~\ref{saddle} to the integral in (\ref{integral2}). We check the conditions (1)-(6) below:
\begin{enumerate}
\item By Lemma \ref{geotricone}, we have $\alpha (\eta_v) + \pi \in C_{1,2}^{\text{top}}$. By a direct computation, $\alpha (\eta_v) + \pi $ is a critical point of the function $f_{1}(\alpha_1; \eta_v) + 2\pi\alpha_1$.
\item By Lemma \ref{convexity}, since $\mathrm{Im} (f_{1}(\alpha_1; \eta_v) + 2\pi\alpha_1)$ is concave down in $x_1$ and has a critical point at $\pi+\alpha(\eta_v)$, we have
$$\mathrm{Im} (f_{1}(\alpha_1; \eta_v) + 2\pi\alpha_1) < \mathrm{Im}\big( f_{1}(\alpha_1(\eta_v) + \pi ; \eta_v) + 2\pi(\alpha_1(\eta_v) + \pi ) \big)= \mathrm{Im}(f_{1}(\alpha_1(\eta_v) ; \eta_v)$$
for all $\alpha_1 \in C_{1,2}^{\text{top}}  \setminus\{\alpha(\eta_v)+\pi\}$. Besides, by Lemma \ref{convexity}, since $\mathrm{Im} f_{1}$ is concave up in $y_1$, we have
\begin{align*}
&\mathrm{Im} (f_{1}(\alpha_1; \eta_v) + 2\pi\alpha_1) \\
<& \max \Big\{ \mathrm{Im} \Big( f_{1} \Big(\pi; \eta_v\Big) + 2\pi \Big(\pi\Big) \Big), \\
&\qquad\qquad\mathrm{Im} \Big( f_{1} \Big(\pi + \sqrt{-1}\mathrm{Im} (\alpha(\eta_v)); \eta_v\Big) + 2\pi \Big(\pi + \sqrt{-1}\mathrm{Im} (\alpha(\eta_v))\Big) \Big\}\\
<&  \mathrm{Im}(f_{1}(\alpha_1(\eta_v) ; \eta_v)
\end{align*}
for $\alpha \in \{ z=  \pi + t\sqrt{-1}\mathrm{Im} (\alpha(\eta_v)) \mid t\in [0,1]\} $ and
\begin{align*}
&\mathrm{Im} (f_{1}(\alpha_1; \eta_v) + 2\pi\alpha_1) \\
<& \max \Big\{ \mathrm{Im} \Big( f_{1} \Big(\frac{3\pi}{2}-\delta; \eta_v\Big) + 2\pi \Big(\frac{3\pi}{2}-\delta\Big) \Big), \\
&\qquad\qquad\mathrm{Im} \Big( f_{1} \Big(\frac{3\pi}{2}-\delta + \sqrt{-1}\mathrm{Im} (\alpha(\eta_v)); \eta_v\Big) + 2\pi \Big(\frac{3\pi}{2}-\delta + \sqrt{-1}\mathrm{Im} (\alpha(\eta_v))\Big) \Big\}\\
<&  \mathrm{Im}(f_{1}(\alpha_1(\eta_v) ; \eta_v)
\end{align*}
for $\alpha \in \{ z= \frac{3\pi}{2}-\delta+ t\sqrt{-1}\mathrm{Im} (\alpha(\eta_v)) \mid t\in [0,1]\} $.
\item By Lemma \ref{convexity},
$$ \Bigg\{ \alpha=x+\sqrt{-1}y \text{ }\Big\vert\text{ } x\in \Big[\pi, \frac{3\pi}{2}-\delta\Big], y\in \RR, \mathrm{Im}f_1(\alpha;\eta_v) < \mathrm{Im} f_{1} (\alpha(\eta_v); \eta_v)\Bigg\}$$
deformation retract to $C_{1,2}^{\text{top}}$.
\item By continuity and compactness of $C$,
\begin{align*}
|g_1(\alpha_1)|=  \lt|  e^{-(\frac{\hat{l}_0 \sqrt{-1}}{2} + \frac{V}{2\pi}) \alpha_1} (1+e^{-2\alpha_1\sqrt{-1}} )^{\frac{A_1}{4\pi \sqrt{-1}}} \rt|
\end{align*}
is non-zero and bounded below by a positive constant independent of $n$.

\item By Proposition \ref{conttoclass}, condition (5) holds.
\item By Proposition \ref{Hess}, condition (6) holds.
\end{enumerate}

Altogether, by Proposition \ref{saddle}, we have
\begin{align*}
&\int_{C_{1,2}} g_1(\alpha_1) e^{\frac{n}{4\pi\sqrt{-1}} \Big(f_{1}(\alpha_1; \eta_v)+ 2\pi \alpha_1
+ O(\frac{1}{n})\Big)} d\alpha_1 \\
&= \Big(\frac{2\pi}{n}\Big)^{\frac{1}{2}} \frac{g_1(\pi+\alpha_1(\eta_v))}{\sqrt{- \Big(\frac{f_{1}}{4\pi\sqrt{-1}}\Big)''(\pi+\alpha_1(\eta_v))}} e^{\frac{n}{4\pi \sqrt{-1}} f_{1}(\pi+\alpha_1(\eta_v); \eta_v)}  \Big( 1 + O \Big( \frac{1}{n} \Big) \Big).
\end{align*}
The proof for the second integral is similar.
\end{proof}

\section{Estimation of other Fourier coefficients}\label{EOFC}

\begin{proposition}\label{OtherFest1}
For $k_1, k_2 \neq 0$, we have
\begin{align*}
\int_{C_{1,1}} g_1(\alpha_1) e^{\frac{n}{4\pi\sqrt{-1}} \Big(f_{1}(\alpha_1; \eta_v)
-4k_1\pi  \alpha_1 + O(\frac{1}{n})\Big)} d\alpha_1 &= O\Big(e^{\frac{n}{2\pi}\Lambda(2\delta)}\Big),\\
\int_{C_{2,1}} g_2(\alpha_2) e^{\frac{n}{4\pi\sqrt{-1}} \Big(f_{2}(\alpha_2; \eta_v)-4k_2\pi  \alpha_2 + O(\frac{1}{n})\Big)} d\alpha_2 &= O\Big(e^{\frac{n}{2\pi}\Lambda(2\delta)}\Big).
\end{align*}

\end{proposition}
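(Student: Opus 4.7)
The plan is to deform the real contour $C_{1,1}$ off the real axis, exploiting the extra factor $e^{\frac{n}{4\pi\sqrt{-1}}(-4k_1\pi\alpha_1)} = e^{nk_1\sqrt{-1}\alpha_1}$, whose modulus $e^{-nk_1\mathrm{Im}\alpha_1}$ decays exponentially when $k_1\,\mathrm{Im}\,\alpha_1 > 0$. I will treat the case $k_1 \geq 1$ by shifting $C_{1,1}$ into the upper half plane; the case $k_1 \leq -1$ is symmetric (shift downward), and the estimate for $C_{2,1}$ is identical after replacing $\eta_v$ by $-\eta_v$. The branch points of $g_1$ coming from $(1+e^{-2\alpha_1\sqrt{-1}})^{A_1/(4\pi\sqrt{-1})}$ all lie on the real axis at $\pi/2 + \pi\ZZ$, and $f_1$ is holomorphic in the open horizontal strip $\{-\pi/2+\delta < \mathrm{Re}\,\alpha_1 < \pi/2-\delta\} \times \RR$ (the dilogarithm branch cut $\{-e^{-2\alpha_1\sqrt{-1}} \in [1,\infty)\}$ is also avoided). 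Cauchy's theorem then permits me to replace $C_{1,1}$ by a rectangular contour $\Gamma_N$ consisting of a top horizontal segment at height $N$ together with two short vertical sides connecting $\pm\pi/2 \mp \delta$ to $\pm\pi/2 \mp \delta + \sqrt{-1}N$, for some height $N = N(\delta, \eta_v) > 0$ to be chosen.

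The key quantitative input will be the uniform asymptotic
$$\mathrm{Im}\,f_{1}(x_1 + \sqrt{-1}N;\,\eta_v) = (2x_1 + \eta_v)N + O(1) \quad \text{as } N\to\infty,$$
valid uniformly for $x_1 \in [-\pi/2 + \delta, \pi/2 - \delta]$, obtained from the dilogarithm inversion formula $\mathrm{li}_2(z) + \mathrm{li}_2(1/z) = -\pi^2/6 - \tfrac{1}{2}\log^2(-z)$ applied to $z = -e^{-2\alpha_1\sqrt{-1}}$ of large modulus $e^{2N}$. Since $2x_1 + \eta_v - 4k_1\pi \leq -2\pi - 2\delta < 0$ for all $k_1 \geq 1$, $x_1$ in the relevant range, and $\eta_v \in (-\pi, \pi)$, I can choose $N = N(\delta, \eta_v)$ large enough to force $\mathrm{Im}(f_1 - 4\pi\alpha_1) \leq 2\Lambda(2\delta)$ uniformly on the top segment; for $k_1 \geq 2$ the bound acquires an additional factor $e^{-n(k_1-1)N}$, yielding control that is uniform and summable in $k_1$.

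On the two short vertical sides (at $x_1 = \pm\pi/2 \mp \delta$, $y_1 \in [0, N]$) I will invoke the concavity of $\mathrm{Im}\,f_1$ in $y_1$ from Lemma \ref{convexity}, together with the direct check that $\partial_{y_1}\mathrm{Im}(f_1 - 4k_1\pi\alpha_1)$ is negative both at $y_1 = 0$ (where explicit evaluation gives $\eta_v - 4k_1\pi$ at $x_1 = -\pi/2+\delta$ and $-\pi + \eta_v - 4k_1\pi$ at $x_1 = \pi/2-\delta$) and in the limit $y_1 \to \infty$ (where it tends to $2x_1 + \eta_v - 4k_1\pi$). By concavity the derivative stays negative throughout $[0,N]$, so $\mathrm{Im}(f_1 - 4k_1\pi\alpha_1)$ is monotonically decreasing from its value at $y_1 = 0$, which by Proposition \ref{2loba} equals $\pm 2\Lambda(\delta)$ and in any case is bounded above by $2\Lambda(2\delta)$. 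Combined with the boundedness of $|g_1|$ on the compact contour $\Gamma_N$, this delivers the required $O\bigl(e^{\frac{n}{2\pi}\Lambda(2\delta)}\bigr)$ estimate.

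The main obstacle will be verifying the uniform asymptotic for $\mathrm{Im}\,f_1$ at large height $N$ together with the uniform boundedness of the $O(1)$ error in $x_1$, and confirming that the concavity-based monotonicity argument applies without subtlety at the endpoints $x_1 = \pm\pi/2 \mp \delta$; both reduce to explicit but delicate computations with the dilogarithm. The estimate for $C_{2,1}$ follows verbatim by the $\eta_v \leftrightarrow -\eta_v$ symmetry between $f_1$ and $f_2$.
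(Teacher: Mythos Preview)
Your proposal is correct and follows essentially the same strategy as the paper: deform $C_{1,1}$ vertically (upward for $k_1\geq 1$, downward for $k_1\leq -1$) and bound $\mathrm{Im}(f_1 - 4k_1\pi\alpha_1)$ on the shifted contour. The paper's execution is slightly leaner in that it bypasses both the dilogarithm inversion formula and the side-by-side concavity check: it simply records the uniform bound $-3\pi+\eta_v < \partial_{y_1}\mathrm{Im} f_1 < 3\pi+\eta_v$ on all of $R_{1,1}$, which for $k_1\geq 1$ gives $\partial_{y_1}\mathrm{Im}(f_1-4k_1\pi\alpha_1)<-\pi+\eta_v<0$ everywhere, handling the top segment (push to any height $L$) and the two vertical sides (monotone decrease from the real-axis value $\leq 2\Lambda(2\delta)$) in one stroke.
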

\begin{proof}
For $\alpha_1 \in R_{1,1}$, we have $-\pi\leq -2x \leq \pi$. Since $\eta_v \in (-\pi,\pi)$ and
$$ \frac{\partial}{\partial y_1}\mathrm{Im} f_{1}(\alpha_1; \eta_v) = \mathrm{Re} \frac{\partial}{\partial \alpha_1} f_{1}(\alpha_1; \eta_v) = -2\mathrm{Arg}\Big(1+e^{-2x_1\sqrt{-1}+ 2y_1} \Big) - 2x_1 + \eta_v ,$$
we have
$$ -3\pi + \eta_v < \frac{\partial}{\partial y_1}\mathrm{Im} f_{1}(\alpha_1; \eta_v) < 3\pi + \eta_v.$$
In particular, for any $\alpha_1 \in R_{1,1}$,
$$ \frac{\partial}{\partial y_1}\mathrm{Im} \Big(f_{1}(\alpha_1; \eta_v) - 4k\pi \sqrt{-1}\alpha_1\Big)
< -\pi + \eta_v < 0 \quad \text{if $k\geq 1$}$$
and
$$ \frac{\partial}{\partial y_1}\mathrm{Im} \Big(f_{1}(\alpha_1; \eta_v) - 4k\pi \sqrt{-1}\alpha_1\Big)
> \pi + \eta_v > 0  \quad \text{if $k\leq -1$.}
$$
For any $L\in \RR$, consider the contour $C_{1,1,L} = C_{1,1,L}^{\text{top}} \cup C_{1,1,L}^{\text{side}}$ defined by
\begin{align*}
C_{1,1,L}^{\text{top}} &=  \Big\{ z=x+\sqrt{-1}L \mid 0 \leq x \leq \frac{\pi}{2}-\delta\Big\}, \\
C_{1,1,L}^{\text{side}} &= \Big\{ z=  tL\sqrt{-1} \mid t\in [0,1]\Big\}  \cup  \Big\{ z= \frac{\pi}{2} - \delta+ tL\sqrt{-1} \mid t\in [0,1] \Big\}.
\end{align*}
As a result, if $k\geq 1$, by choosing sufficiently large $L>0$, we can make sure that the $\mathrm{Im} (f_{1}(\alpha_1; \eta_v) - 4k\pi \sqrt{-1}\alpha_1) < 2\Lambda(2\delta)$ on the top $C_{1,1,L}^{\text{top}}$. By convexity, we know that $\mathrm{Im} (f_{1}(\alpha_1; \eta_v) - 4k\pi \sqrt{-1}\alpha_1) < 2\Lambda(2\delta)$ on $C_{1,1,L}^{\text{side}}$. Similarly, if $k\leq -1$, by choosing $L<0$ with $|L|$ sufficiently large, we can make sure that the $\mathrm{Im} (f_{1}(\alpha_1; \eta_v) - 4k\pi \sqrt{-1}\alpha_1) < 2\Lambda(2\delta)$ on the top $C_{1,1,L}^{\text{top}}$. By convexity, we know that $\mathrm{Im} (f_{1}(\alpha_1; \eta_v) - 4k\pi \sqrt{-1}\alpha_1)< 2\Lambda(2\delta)$ on the side $C_{1,1,L}^{\text{side}}$. This completes the proof of the proposition for the first estimation.

For the second estimation, for $\alpha_2 \in R_{2,1}$, since $\eta_v \in (-\pi,\pi)$ and
$$ \frac{\partial}{\partial y_1}\mathrm{Im} f_{2}(\alpha_2; \eta_v) = \mathrm{Re} \frac{\partial}{\partial \alpha_2} f_{2}(\alpha_2; \eta_v) = -2\mathrm{Arg}(1+e^{-2x_2\sqrt{-1}+ 2y_2} ) - 2x_2 - \eta_v ,$$
we have
$$ -3\pi - \eta_v < \frac{\partial}{\partial y_1}\mathrm{Im} f_{1}(\alpha_1; \eta_v) < 3\pi - \eta_v.$$
The proof is similar to the previous case.
\end{proof}

\begin{proposition}\label{OtherFest2}
For $k_1, k_2  \neq -1$, we have
\begin{align*}
\int_{C_{1,2}} g_1(\alpha_1) e^{\frac{n}{4\pi\sqrt{-1}} \Big(f_{1}(\alpha_1; \eta_v)
-4k_1\pi  \alpha_1 + O(\frac{1}{n})\Big)} d\alpha_1 &= O\Big(e^{\frac{n}{2\pi}\Lambda(2\delta)}\Big),\\
\int_{C_{2,2}} g_2(\alpha_2) e^{\frac{n}{4\pi\sqrt{-1}} \Big(f_{2}(\alpha_2; \eta_v)-4k_2\pi  \alpha_2 + O(\frac{1}{n})\Big)} d\alpha_2 &= O\Big(e^{\frac{n}{2\pi}\Lambda(2\delta)}\Big).
\end{align*}
\end{proposition}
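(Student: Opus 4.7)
The plan is to mirror the strategy of Proposition \ref{OtherFest1}, now on the contour $C_{1,2}$, which sits in the strip $x_1\in(\pi/2+\delta,\,3\pi/2-\delta)$, and symmetrically on $C_{2,2}$. I would deform $C_{1,2}$ vertically to a rectangular contour $C_{1,2,L}^{\text{top}}\cup C_{1,2,L}^{\text{side}}$, with horizontal top at $y_1=L$ and two short vertical sides at $x_1=\pi/2+\delta$ and $x_1=3\pi/2-\delta$. Cauchy's theorem permits this deformation since the integrand is holomorphic in the strip, and the direction of the shift is dictated by the sign of $\partial_{y_1}\mathrm{Im}\bigl(f_{1}(\alpha_1;\eta_v)-4k_1\pi\alpha_1\bigr)$.

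Using the Cauchy--Riemann equations as in the previous proposition,
\begin{align*}
\frac{\partial}{\partial y_1}\mathrm{Im}\bigl(f_{1}(\alpha_1;\eta_v)-4k_1\pi\alpha_1\bigr)
= -2\mathrm{Arg}\bigl(1+e^{-2x_1\sqrt{-1}+2y_1}\bigr)-2x_1+\eta_v-4k_1\pi.
\end{align*}
For $\alpha_1\in R_{1,2}$ we have $-2x_1\in(-3\pi+2\delta,\,-\pi-2\delta)$ and the principal argument lies in $(-\pi,\pi)$, so this derivative is confined to $(-5\pi+\eta_v-4k_1\pi,\,\pi+\eta_v-4k_1\pi)$. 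Since $\eta_v\in(-\pi,\pi)$, the upper bound is strictly negative whenever $k_1\geq 1$ and the lower bound is strictly positive whenever $k_1\leq -2$, so in either regime the derivative has a uniform sign on $R_{1,2}$, and a sufficiently large $|L|$ forces $\mathrm{Im}(f_{1}-4k_1\pi\alpha_1)<2\Lambda(2\delta)$ on the top segment. The remaining integer value $k_1=0$ requires the finer input of Lemma \ref{convexity}: combining concavity (respectively convexity) of $\mathrm{Im} f_1$ in $y_1$ on the two halves of $R_{1,2}$ with the asymptotic limits $\partial_{y_1}\mathrm{Im} f_1\to -2x_1+\eta_v$ as $y_1\to-\infty$ and $\partial_{y_1}\mathrm{Im} f_1\to 2x_1-4\pi+\eta_v$ as $y_1\to+\infty$ (both strictly negative on $R_{1,2}$ for $\eta_v<\pi$), one sees $\partial_{y_1}\mathrm{Im} f_1<0$ throughout $R_{1,2}$, so the upward shift works here as well. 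The excluded value $k_1=-1$ is the analogue of the excluded $k_1=0$ in Proposition \ref{OtherFest1}: the derivative interval straddles zero and even the sharpened concavity argument fails to yield a uniform sign across the full strip.

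On the two vertical sides of the deformed contour, Lemma \ref{convexity} shows that $\mathrm{Im}(f_1-4k_1\pi\alpha_1)$ is concave up or down in $y_1$ (the term $-4k_1\pi\alpha_1$ contributes only a linear function of $y_1$), so its values are controlled by the two corner values, each of which is at most $2\Lambda(2\delta)$ by the choice of $\delta$ in Lemma \ref{assumpvol}. Combining the top and side estimates yields the claimed bound $O(e^{(n/(2\pi))\Lambda(2\delta)})$. The argument for the second integral over $C_{2,2}$ is identical after replacing $\eta_v$ by $-\eta_v$, which swaps the roles of $k_2\geq 1$ and $k_2\leq -2$ but leaves unchanged the excluded value $k_2=-1$. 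The main obstacle is the sharpened analysis of the integer case $k_1=0$ (and $k_2=0$): the coarse derivative bound is inconclusive there, and Lemma \ref{convexity} together with the explicit asymptotics of $\partial_{y_1}\mathrm{Im} f_1$ at $y_1=\pm\infty$ is essential to secure the uniform negative sign that justifies the upward contour shift.
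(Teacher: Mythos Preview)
Your approach is correct in substance and close in spirit to the paper's, but differs in how the case $k_1=0$ is handled and is slightly loose on the side estimates.

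The paper first splits $C_{1,2}$ at $x_1=\pi$: on the left half $[\pi/2+\delta,\pi]$ it bounds the integrand directly on the real axis, using that $\mathrm{Im} f_1$ is concave up in $x_1$ there and that $-4k_1\pi\alpha_1$ is real when $\alpha_1$ is real (so this half is handled for every $k_1$ at once). Only on the right half $[\pi,3\pi/2-\delta]$ does it deform the contour vertically, and on that narrower strip the crude derivative bound already tightens to $(-5\pi+\eta_v,\,-2\pi+\eta_v)$, which is uniformly negative; this covers $k_1\geq 0$ with no further analysis, and $k_1\leq -2$ symmetrically. Your route avoids the split but pays for it with the refined asymptotic/concavity argument for $k_1=0$; that argument is valid, but the paper's decomposition is shorter precisely because it makes the $k_1=0$ case fall out of the same derivative estimate as $k_1\geq 1$.

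Two small points on your side estimates. First, on the side at $x_1=\pi/2+\delta$ the function $\mathrm{Im}(f_1-4k_1\pi\alpha_1)$ is concave \emph{down} in $y_1$, so concavity alone does not bound it by the corner values; what actually gives the bound is the monotonicity in $y_1$ that you already established (uniform sign of $\partial_{y_1}$), which forces the maximum on the side to occur at the real-axis corner. Second, the reference to Lemma~\ref{assumpvol} is misplaced: that lemma bounds $2\Lambda(2\delta)$ from above by the critical value, whereas what you need here is that the real-axis corner values $\mathrm{Im} f_1(\pi/2+\delta;\eta_v)=-2\Lambda(\delta)$ and $\mathrm{Im} f_1(3\pi/2-\delta;\eta_v)=2\Lambda(\delta)$ are at most $2\Lambda(2\delta)$, which follows from Proposition~\ref{2loba} and the monotonicity of $\Lambda$ near $0$.
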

\begin{proof} The proof is similar to that for Proposition \ref{OtherFest1}.
We first divide the contour $C_{1,2}$ into $C_{1,2} \cap [\frac{\pi}{2}, \pi]$ and $C_{1,2} \cap [\pi, \frac{3\pi}{2}]$. On $C_{1,2} \cap [\frac{\pi}{2}, \pi]$, by convexity, we have
$$ \mathrm{Im}\Big(f_{1}(\alpha_1; \eta_v) - 4k\pi \sqrt{-1}\alpha_1 \Big)
=\mathrm{Im} f_{1}(\alpha_1; \eta_v)
\leq \max\Big\{f_{1}\Big( \frac{\pi}{2}+\delta ; \eta_v\Big), f_{1}(\pi;\eta_v) \Big\} \leq 2\Lambda (2\delta)  . $$
Thus,
$$ \int_{C_{1,2}\cap [\frac{\pi}{2}, \pi]} g_1(\alpha_1) e^{\frac{n}{4\pi\sqrt{-1}} \Big(f_{1}(\alpha_1; \eta_v)+ O(\frac{1}{n})\Big)} d\alpha_1 = O\Big(e^{\frac{n}{2\pi}\Lambda(2\delta)}\Big).$$
For $\alpha_1 \in R_{1,2} \cap ([\frac{\pi}{2}, \pi] \times \RR)$, we have $-3\pi < -2x < -2\pi$ and hence $-2\pi < -2\mathrm{Arg}(1+e^{-2x_1\sqrt{-1}+ 2y_1} )  < 0$. Since $\eta_v \in (-\pi,\pi)$ and
$$ \frac{\partial}{\partial y_1}\mathrm{Im} f_{1}(\alpha_1; \eta_v) = \mathrm{Re} \frac{\partial}{\partial \alpha_1} f_{1}(\alpha_1; \eta_v) = -2\mathrm{Arg}(1+e^{-2x_1\sqrt{-1}+ 2y_1} ) - 2x_1 + \eta_v ,$$
we have
$$ -5\pi + \eta_v < \frac{\partial}{\partial y_1}\mathrm{Im} f_{1}(\alpha_1; \eta_v) < -2\pi + \eta_v.$$
In particular, for any $\alpha_1 \in R_{1,2} \cap ([\frac{\pi}{2}, \pi] \times \RR)$,
$$ \frac{\partial}{\partial y_1}\mathrm{Im} (f_{1}(\alpha_1; \eta_v) - 4k_1\pi \sqrt{-1}\alpha_1)
< -2\pi + \eta_v < 0 \quad \text{if $k\geq 0$}$$
and
$$ \frac{\partial}{\partial y_1}\mathrm{Im} (f_{1}(\alpha_1; \eta_v) - 4k\pi \sqrt{-1}\alpha_1)
> 3\pi + \eta_v > 0  \quad \text{if $k\leq -2$}
$$
As a result, if $k\geq 0$, by pushing the contour upward in $y_1$ direction, we can make sure that the $\mathrm{Im} (f_{1}(\alpha_1; \eta_v)-4k\pi\sqrt{-1}) < \delta$ on the top. By convexity, we know that $\mathrm{Im} (f_{1}(\alpha_1; \eta_v)-4k\pi\sqrt{-1}) < \delta$ on the side. Similarly, if $k\leq -2$, by pushing the contour downward in $y_1$ direction, we can make sure that the $\mathrm{Im} (f_{1}(\alpha_1; \eta_v)-4k\pi\sqrt{-1}) < \delta$ on the top. By convexity, we know that $\mathrm{Im} (f_{1}(\alpha_1; \eta_v) -4k\pi\sqrt{-1})< \delta$ on the side. This completes the proof of the first estimation. The proof of the second estimation is similar.
\end{proof}

\begin{proposition}\label{OtherFest3}
We have
\begin{align*}
\int_{C_{1,2}} g_1(\alpha_1) e^{\frac{n}{4\pi\sqrt{-1}} \Big(f_{1}(\alpha_1; \eta_v)
+ 4\pi  \alpha_1 + O(\frac{1}{n})\Big)} d\alpha_1 &= O\Big(e^{\frac{n}{2\pi}\Lambda(2\delta)}\Big),\\
\int_{C_{2,2}} g_2(\alpha_2) e^{\frac{n}{4\pi\sqrt{-1}} \Big(f_{2}(\alpha_2; \eta_v)+ 4\pi  \alpha_2 + O(\frac{1}{n})\Big)} d\alpha_2 &= O\Big(e^{\frac{n}{2\pi}\Lambda(2\delta)}\Big).
\end{align*}
\end{proposition}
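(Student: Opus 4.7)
The plan is to adapt the proofs of Propositions \ref{OtherFest1} and \ref{OtherFest2}: split the contour $C_{1,2}$ into $(C_{1,2}\cap[\pi/2,\pi])\cup(C_{1,2}\cap[\pi,3\pi/2])$, deform each piece by Cauchy's theorem, and establish the pointwise bound $\mathrm{Im}(f_1(\alpha_1;\eta_v) + 4\pi\alpha_1)\leq 2\Lambda(2\delta)$ on the deformed contour. Combined with a uniform bound on $|g_1|$, this yields the claimed $O(e^{n\Lambda(2\delta)/(2\pi)})$ estimate. The conceptual reason the integral is small is that the critical point of $f_1(\alpha_1;\eta_v) + 4\pi\alpha_1$ is $\alpha_1(\eta_v) + 2\pi$, whose real part is approximately $7\pi/3$ and therefore lies outside $R_{1,2}^{\delta}$, so no saddle contributes.

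On $C_{1,2}\cap[\pi/2,\pi]$ we keep the original real-axis contour. Since $\mathrm{Im}(4\pi\alpha_1)=0$ on $\mathbb R$, we have $\mathrm{Im}(f_1+4\pi\alpha_1) = \mathrm{Im}(f_1)$, and by Lemma \ref{convexity}, $\mathrm{Im}(f_1)$ is concave up in $x_1$ on $[\pi/2,\pi]$. Using Proposition \ref{2loba} together with the identity $-e^{-2x_1\sqrt{-1}} = e^{\sqrt{-1}(\pi - 2x_1)}$, one checks that the endpoint values at $x_1=\pi/2+\delta$ and $x_1=\pi$ are $-2\Lambda(\delta)\leq 0$ and $0$ respectively, so $\mathrm{Im}(f_1+4\pi\alpha_1)\leq 0\leq 2\Lambda(2\delta)$ on this sub-interval, exactly as in the proof of Proposition \ref{OtherFest2}.

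On $C_{1,2}\cap[\pi,3\pi/2]$ the real-axis integrand is not small, so I deform to the rectangular contour $C^{\text{left}}\cup C^{\text{bot}}\cup C^{\text{right}}$ going from $\pi$ down to $\pi-\sqrt{-1}L$, across to $(3\pi/2-\delta)-\sqrt{-1}L$, and back up to $3\pi/2-\delta$, for some $L>0$ to be chosen. Cauchy's theorem applies because the branch cut of $\mathrm{li}_2(-e^{-2\alpha_1\sqrt{-1}})$ requires $-e^{-2\alpha_1\sqrt{-1}}\in[1,\infty)$, i.e.~$x_1\equiv\pi/2\pmod{\pi}$ together with $y_1\geq 0$, which the lower-half-plane rectangle with $x_1\in[\pi,3\pi/2-\delta]$ avoids. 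On $C^{\text{left}}$ the substitution $-e^{-2(\pi+\sqrt{-1}y_1)\sqrt{-1}}=-e^{2y_1}\in\mathbb{R}_{<0}$ kills the imaginary part of $\mathrm{li}_2$, giving $\mathrm{Im}(f_1+4\pi\alpha_1)|_{x_1=\pi}=y_1(\eta_v+2\pi)$, which is non-positive on $y_1\in[-L,0]$ since $\eta_v+2\pi>\pi$. On $C^{\text{right}}$, Lemma \ref{convexity} gives convexity in $y_1$, so the maximum occurs at an endpoint; the value at $y_1=0$ is $2\Lambda(\delta)\leq 2\Lambda(2\delta)$, while the value at $y_1=-L$ is already small by the next estimate. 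On $C^{\text{bot}}$, expanding yields
$$\mathrm{Im}(f_1+4\pi\alpha_1)|_{y_1=-L}=-L(4\pi+\eta_v-2x_1)+O(e^{-2L}),$$
and since $4\pi+\eta_v-2x_1\geq \pi+\eta_v>0$ uniformly in $x_1\in[\pi,3\pi/2]$ for $\eta_v\in(-\pi,\pi)$, this tends to $-\infty$ uniformly in $x_1$ as $L\to\infty$, so choosing $L$ sufficiently large gives the required bound $2\Lambda(2\delta)$ on the entire bottom segment.

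I expect the most technical part will be confirming the uniform $x_1$-estimate on $C^{\text{bot}}$ and coordinating the single choice of $L$ (depending only on $\delta$ and $\eta_v$) so that all four sides of the rectangle satisfy $\mathrm{Im}(f_1+4\pi\alpha_1)\leq 2\Lambda(2\delta)$ simultaneously. The integral over $C_{2,2}$ is handled by the same argument applied to $(f_2,g_2,\alpha_2)$ with $\eta_v$ replaced by $-\eta_v$: the signs in the $\partial_{y_2}$-derivatives and asymptotics once again force the rectangle into the lower half-plane, and the estimates transfer verbatim.
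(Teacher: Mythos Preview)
Your proposal is correct and follows essentially the same route as the paper's proof: both split $C_{1,2}$ into $[\pi/2,\pi]$ (handled on the real axis via the concave-up bound of Lemma~\ref{convexity}) and $[\pi,3\pi/2]$ (pushed into the lower half-plane using that $\partial_{y_1}\mathrm{Im}(f_1+4\pi\alpha_1)\to 4\pi+\eta_v-2x_1\geq \pi+\eta_v>0$ as $y_1\to -\infty$, with convexity in $y_1$ controlling the vertical sides). Your version is in fact slightly more careful than the paper on two points: you give the explicit linear formula $y_1(\eta_v+2\pi)$ on the left wall $x_1=\pi$, where the second $y_1$-derivative in Lemma~\ref{convexity} degenerates to zero, and you write out the $O(e^{-2L})$ asymptotic on the bottom rather than only the limiting derivative.
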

\begin{proof}
We first divide the contour $C_{1,2}$ into $C_{1,2} \cap [\frac{\pi}{2}, \pi]$ and $C_{1,2} \cap [\pi, \frac{3\pi}{2}]$. On $C_{1,2} \cap [\frac{\pi}{2}, \pi]$, by convexity, we have we have
$$\mathrm{Im} f_{1}(\alpha_1; \eta_v)
\leq \max\Big\{f_{1}\Big( \frac{\pi}{2}+\delta ; \eta_v\Big), f_{1}(\pi;\eta_v) \Big\} \leq 2\Lambda (2\delta)  . $$
Thus,
$$ \int_{C_{1,2}\cap [\frac{\pi}{2}, \pi]} g_1(\alpha_1) e^{\frac{n}{4\pi\sqrt{-1}} \Big(f_{1}(\alpha_1; \eta_v)+ O(\frac{1}{n})\Big)} d\alpha_1 = O\Big(e^{\frac{n}{2\pi}\Lambda(2\delta)}\Big).$$
For any $\alpha_1 =x+ \sqrt{-1}y \in R_{1,2}$ with $x\in [\frac{\pi}{2}, \pi]$, we have $-3\pi +\delta< -2x < -2\pi - \delta$. In particular,
\begin{align*}
\lim_{y_1 \to -\infty} \frac{\partial}{\partial y_1} \mathrm{Im} \Big( f_{1}(\alpha_1; \eta_v) +4\pi \alpha\Big)&=\lim_{y_1 \to -\infty} \Big[ -2\mathrm{Arg}\Big(1+e^{-2x_1\sqrt{-1}+ 2y_1} \Big) - 2x_1 + \eta_v + 4\pi \Big]\\
&= -2x_1 + \eta_v + 4\pi >\pi + \eta_v + \delta > 0.
\end{align*}
As a result, by pushing the contour downward in $y_1$ direction, we can make sure that the $\mathrm{Im} (f_{1}(\alpha_1; \eta_v) + 4\pi \alpha_1) < 2\Lambda(2\delta)$ on the top. By convexity, we know that $\mathrm{Im} (f_{1}(\alpha_1; \eta_v)+ 4\pi\alpha_1) < 2\Lambda(2\delta)$ on the side. This proves the first estimation. The proof of the second estimation is similar.
\end{proof}

We have the following analogue of Proposition \ref{OtherFest1}, \ref{OtherFest2} and \ref{OtherFest3}. The proofs are similar.

\begin{proposition}\label{OtherFest1'}
For $k_1, k_2 \neq 0$, we have
\begin{align*}
\int_{C_{1,2}} g_1(\alpha_1) e^{\frac{n}{4\pi\sqrt{-1}} \Big(f_{1}(\alpha_1; \eta_v)+ 2\pi \alpha_1
-4k_1\pi  \alpha_1 + O(\frac{1}{n})\Big)} d\alpha_1 &= O\Big(e^{\frac{n}{2\pi}\Lambda(2\delta)}\Big),\\
\int_{C_{2,2}} g_2(\alpha_2) e^{\frac{n}{4\pi\sqrt{-1}} \Big(f_{2}(\alpha_2; \eta_v)+ 2\pi \alpha_2 -4k_2\pi  \alpha_2 + O(\frac{1}{n})\Big)} d\alpha_2 &= O\Big(e^{\frac{n}{2\pi}\Lambda(2\delta)}\Big).
\end{align*}
\end{proposition}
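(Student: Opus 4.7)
The plan is to adapt the proofs of Propositions~\ref{OtherFest1}--\ref{OtherFest3} to the shifted exponent $f_1(\alpha_1;\eta_v) + 2\pi\alpha_1 - 4k_1\pi\alpha_1$ on $C_{1,2}$. I would split $C_{1,2}$ into the two halves $C_{1,2}\cap[\pi/2+\delta,\pi]$ and $C_{1,2}\cap[\pi,3\pi/2-\delta]$. On the first half, $\alpha_1$ is real so the imaginary parts of $2\pi\alpha_1$ and $-4k_1\pi\alpha_1$ vanish, and the modulus of the integrand is controlled by $\mathrm{Im}\,f_1(x_1;\eta_v)$ alone, which is bounded by $2\Lambda(2\delta)$ via the same convexity-and-endpoint analysis used in Proposition~\ref{OtherFest2} (concavity of $\mathrm{Im}\,f_1$ in $x_1$ on $[\pi/2,\pi]$, together with $\mathrm{Im}\,\mathrm{li}_2(-1)=0$ and Proposition~\ref{2loba}).

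On the second half, which contains the critical point $\pi+\alpha_1(\eta_v)$ of the leading-term integral from Proposition~\ref{leading2}, the key step is to deform the contour upward when $k_1\geq 1$ and downward when $k_1\leq -1$, so that the top of the resulting rectangle lies in a region where $\mathrm{Im}(f_1+2\pi\alpha_1-4k_1\pi\alpha_1)\leq 2\Lambda(2\delta)$. This deformation is justified by the computation
\[
\frac{\partial}{\partial y_1}\mathrm{Im}\bigl(f_1+2\pi\alpha_1-4k_1\pi\alpha_1\bigr)
= -2\mathrm{Arg}\bigl(1+e^{-2x_1\sqrt{-1}+2y_1}\bigr)-2x_1+\eta_v+2\pi-4k_1\pi,
\]
combined with the bounds $-2\mathrm{Arg}\in(-2\pi,0)$ and $-2x_1\in(-3\pi,-2\pi)$ established in the proof of Proposition~\ref{OtherFest2} and with $\eta_v\in(-\pi,\pi)$. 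The derivative then lies in $(-3\pi+\eta_v-4k_1\pi,\;\eta_v-4k_1\pi)$, which is uniformly negative for $k_1\geq 1$ and uniformly positive for $k_1\leq -1$, so pushing far enough in the appropriate imaginary direction forces the top-contour values below $2\Lambda(2\delta)$. The vertical sides of the rectangle are then controlled by Lemma~\ref{convexity}: the added term $(2\pi-4k_1\pi)y_1$ is linear in $y_1$ and preserves the concavity-up of $\mathrm{Im}\,f_1$ in $y_1$, so on each side the exponent attains its maximum at an endpoint, bounded either by the top-contour estimate or by the real-axis values $\mathrm{Im}\,f_1(\pi;\eta_v)=0$ and $\mathrm{Im}\,f_1(3\pi/2-\delta;\eta_v)=2\Lambda(\delta)\leq 2\Lambda(2\delta)$ (for $\delta$ small enough that $\Lambda$ is monotone on $[0,2\delta]$).

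The estimate for $\int_{C_{2,2}}$ is entirely analogous, with $f_2$ replacing $f_1$ and the sign of $\eta_v$ flipped in the derivative formula; since $\eta_v\in(-\pi,\pi)$ the sign of the derivative remains uniformly controlled for $k_2\neq 0$. The only delicate point throughout is keeping track of the branch of $\log(1+e^{-2\alpha\sqrt{-1}})$ implicit in the $\mathrm{Arg}$ computation, but this is exactly the book-keeping already carried out in Propositions~\ref{OtherFest1}--\ref{OtherFest3}, so no new obstacle arises.
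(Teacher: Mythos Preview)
Your approach is essentially the paper's: bound the sign of $\partial_{y_1}\mathrm{Im}\bigl(f_1+2\pi\alpha_1-4k_1\pi\alpha_1\bigr)$ uniformly for $k_1\neq 0$, then deform the contour vertically so that the top and sides sit below $2\Lambda(2\delta)$. The paper is slightly more economical, treating all of $R_{1,2}$ at once without splitting: from $-2(x_1-\pi)\in[-\pi,\pi]$ and $-2\mathrm{Arg}\in(-2\pi,2\pi)$ one gets the $y_1$-derivative in $(-3\pi+\eta_v,\,3\pi+\eta_v)$ on the whole strip, which already forces uniform negativity for $k_1\geq 1$ and positivity for $k_1\leq -1$.

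One slip to fix: on $x_1\in(\pi,3\pi/2)$ we have $\sin(2x_1)>0$, so the imaginary part of $1+e^{-2x_1\sqrt{-1}+2y_1}$ is negative, giving $-2\mathrm{Arg}\in(0,2\pi)$ rather than $(-2\pi,0)$; the bound you quote from Proposition~\ref{OtherFest2} was stated for the other half $[\pi/2,\pi]$, where the sign is reversed. With the corrected range your derivative lies in $(-\pi+\eta_v-4k_1\pi,\,2\pi+\eta_v-4k_1\pi)$, which is still uniformly negative for $k_1\geq 1$ and uniformly positive for $k_1\leq -1$, so the contour-pushing argument and the side estimates go through unchanged.
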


\begin{proof}
For $\alpha_1 \in R_{1,2}$, we have $-\pi\leq -2(x-\pi) \leq \pi$. Since $\eta_v \in (-\pi,\pi)$ and
$$ \frac{\partial}{\partial y_1}\mathrm{Im} \Big(f_{1}(\alpha_1; \eta_v)-2\pi\alpha_1\Big)  = -2\mathrm{Arg}(1+e^{-2x_1\sqrt{-1}+ 2y_1} ) - 2(x_1-\pi) + \eta_v ,$$
we have
$$ -3\pi + \eta_v < \frac{\partial}{\partial y_1}\mathrm{Im}\Big(f_{1}(\alpha_1; \eta_v)-2\pi\alpha_1\Big)  < 3\pi + \eta_v.$$
In particular, for any $\alpha_1 \in R_{1,2}$,
$$ \frac{\partial}{\partial y_1}\mathrm{Im} \Big(f_{1}(\alpha_1; \eta_v) -2\pi\alpha_1 - 4k\pi \sqrt{-1}\alpha_1\Big)
< -\pi + \eta_v < 0 \quad \text{if $k\geq 1$}$$
and
$$ \frac{\partial}{\partial y_1}\mathrm{Im} \Big(f_{1}(\alpha_1; \eta_v) -2\pi\alpha_1 - 4k\pi \sqrt{-1}\alpha_1\Big)
> \pi + \eta_v > 0  \quad \text{if $k\leq -1$}
$$
As a result, if $k\geq 1$, by pushing the contour upward in $y_1$ direction, we can make sure that the $\mathrm{Im} (f_{1}(\alpha_1; \eta_v) -2\pi\alpha_1 - 4k\pi \sqrt{-1}\alpha_1) < 2\Lambda(2\delta)$ on the top. By convexity, we know that $\mathrm{Im} (f_{1}(\alpha_1; \eta_v) -2\pi\alpha_1 - 4k\pi \sqrt{-1}\alpha_1) <2\Lambda(2\delta)$ on the side. Similarly, if $k\leq -1$, by pushing the contour downward in $y_1$ direction, we can make sure that the $\mathrm{Im} (f_{1}(\alpha_1; \eta_v) -2\pi\alpha_1 - 4k\pi \sqrt{-1}\alpha_1) <2\Lambda(2\delta)$ on the top. By convexity, we know that $\mathrm{Im} (f_{1}(\alpha_1; \eta_v) -2\pi\alpha_1 - 4k\pi \sqrt{-1}\alpha_1)<2\Lambda(2\delta)$ on the side. This completes the proof of the first claim. The proof of the second claim is similar.
\end{proof}

\begin{proposition}\label{OtherFest2'}
For $k_1, k_2  \neq 0$, we have
\begin{align*}
\int_{C_{1,1}} g_1(\alpha_1) e^{\frac{n}{4\pi\sqrt{-1}} \Big(f_{1}(\alpha_1; \eta_v)+ 2\pi\alpha_1
-4k_1\pi  \alpha_1 + O(\frac{1}{n})\Big)} d\alpha_1 &= O\Big(e^{\frac{n}{2\pi}\Lambda(2\delta)}\Big),\\
\int_{C_{2,1}} g_2(\alpha_2) e^{\frac{n}{4\pi\sqrt{-1}} \Big(f_{2}(\alpha_2; \eta_v)+ 2\pi\alpha_2 -4k_2\pi  \alpha_2 + O(\frac{1}{n})\Big)} d\alpha_2 &= O\Big(e^{\frac{n}{2\pi}\Lambda(2\delta)}\Big).
\end{align*}
\end{proposition}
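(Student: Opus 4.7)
The plan is to mirror the proofs of Propositions \ref{OtherFest1}--\ref{OtherFest1'}. Writing $F(\alpha_1) := f_1(\alpha_1; \eta_v) + 2\pi(1 - 2k_1)\alpha_1$, the goal is to replace the integration over $C_{1,1}$ by a homotopic contour on which $\mathrm{Im} F \leq 2\Lambda(2\delta)$; this immediately gives the claimed bound.

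First I would split $C_{1,1}$ at the origin. On the left piece $C_{1,1}\cap(-\pi/2+\delta,\,0]$, the linear term $\mathrm{Im}\bigl(2\pi(1-2k_1)\alpha_1\bigr)$ vanishes on the real axis, so Proposition \ref{2loba} gives $\mathrm{Im} F(x_1)=2\Lambda(\pi/2-x_1)$ with argument in $[\pi/2,\pi-\delta)$; the reflection identity $\Lambda(\pi-\theta)=-\Lambda(\theta)$ makes this non-positive, and that piece contributes $O(1)$.

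For the right piece $C_{1,1}\cap[0,\pi/2-\delta]$, I would replace it by a rectangular detour $C^{\text{top}}_L \cup C^{\text{side}}_L$ at height $L$, taking $L>0$ when $k_1\geq 1$ and $L<0$ when $k_1\leq -1$. The critical points of $F$ sit at $\alpha_1(\eta_v)+(1-2k_1)\pi$, whose real parts lie outside $[0,\pi/2-\delta]$ for every $k_1\neq 0$, so the deformation stays in the analyticity region. For $x_1\in[0,\pi/2]$, Lemma \ref{convexity} gives $\partial_{y_1}^2\mathrm{Im} f_1\geq 0$; using $\mathrm{Arg}(1+e^{-2ix_1})=-x_1$ at $y_1=0$ and $\mathrm{Arg}(1+e^{-2ix_1+2y_1})\to -2x_1$ as $y_1\to+\infty$, the monotone quantity $\partial_{y_1}\mathrm{Im} f_1$ ranges over $[\eta_v,\,2x_1+\eta_v]$ on $y_1\geq 0$ and over $[\eta_v-2x_1,\,\eta_v]$ on $y_1\leq 0$. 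Adding the linear contribution $2\pi(1-2k_1)y_1$, one obtains $\partial_{y_1}\mathrm{Im} F\leq \eta_v-\pi<0$ throughout $y_1\geq 0$ when $k_1\geq 1$ and $\partial_{y_1}\mathrm{Im} F\geq \eta_v+5\pi>0$ throughout $y_1\leq 0$ when $k_1\leq -1$; so $\mathrm{Im} F$ decreases monotonically along the push and tends to $-\infty$ on the top segment as $|L|\to\infty$. Concavity-up in $y_1$ then bounds $\mathrm{Im} F$ on each vertical side segment by its value at an endpoint, which is either $\leq 2\Lambda(2\delta)$ (at $y_1=0$, by the real-axis formula $2\Lambda(\pi/2-x_1)$ evaluated at $x_1\in\{0,\pi/2-\delta\}$) or arbitrarily negative (at $y_1=L$).

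The $C_{2,1}$ integral is handled identically with $\eta_v\mapsto -\eta_v$ and subscript $1\mapsto 2$. I do not anticipate a serious obstacle; the only mildly delicate step is certifying the sign of $\partial_{y_1}\mathrm{Im} F$ in the border cases $k_1=\pm 1$, where the rough bound $|\mathrm{Arg}(\cdot)|<\pi$ is inconclusive but the sharper monotonicity from Lemma \ref{convexity}, combined with the asymptotic value of $\mathrm{Arg}$, suffices.
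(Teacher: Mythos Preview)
Your argument is correct and follows the paper's strategy: split $C_{1,1}$ at the origin, bound the left piece directly on the real axis via $\mathrm{Im}\,F(x_1)=2\Lambda(\pi/2-x_1)\le 0$, and on the right piece push the contour vertically using the sign of $\partial_{y_1}\mathrm{Im}\,F$, controlling the sides by convexity in $y_1$. Your sharper estimate on $\partial_{y_1}\mathrm{Im}\,f_1$ (via the monotonicity in Lemma~\ref{convexity} together with the limiting value of $\mathrm{Arg}$) is precisely what resolves the borderline cases $k_1=\pm 1$; the only cosmetic remark is that the location of the critical points is irrelevant to whether the deformation stays in the analyticity region---what matters is avoiding the branch cuts of $\mathrm{li}_2(-e^{-2i\alpha_1})$ along $\mathrm{Re}\,\alpha_1=\tfrac{\pi}{2}+m\pi$, which your rectangle over $[0,\tfrac{\pi}{2}-\delta]$ indeed does.
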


\begin{proof}
We first divide the contour $C_{1,1}$ into $C_{1,1} \cap [-\frac{\pi}{2}, 0]$ and $C_{1,1} \cap [0, \frac{\pi}{2}]$. On $C_{1,1} \cap [-\frac{\pi}{2}, 0]$, by convexity, we have
$$ \int_{C_{1,1} \cap [-\frac{\pi}{2}, 0]} g_1(\alpha_1) e^{\frac{n}{4\pi\sqrt{-1}} \Big(f_{1}(\alpha_1; \eta_v)+ 2\pi\alpha_1 O(\frac{1}{n})\Big)} d\alpha_1 = O\Big(e^{\frac{n}{2\pi}\Lambda(2\delta)}\Big).$$
For $\alpha_1 \in R_{1,1} \cap ([0, \frac{\pi}{2}] \times \RR)$, we have $\pi < -2x + 2\pi < 2\pi$ and
$$-2\pi < -2\mathrm{Arg}(1+e^{-2x_1\sqrt{-1}+ 2y_1} )  < 0.$$
Since $\eta_v \in (-\pi,\pi)$ and
$$ \frac{\partial}{\partial y_1}\mathrm{Im} \Big( f_{1}(\alpha_1; \eta_v) +2\pi\alpha_1\Big) = -2\mathrm{Arg}(1+e^{-2x_1\sqrt{-1}+ 2y_1} ) - 2x_1 + 2\pi + \eta_v ,$$
we have
$$ -\pi + \eta_v < \frac{\partial}{\partial y_1}\mathrm{Im} f_{1}(\alpha_1; \eta_v) < 2\pi + \eta_v.$$
In particular, for any $\alpha_1 \in R_{1,1} \cap ([0, \frac{\pi}{2}] \times \RR)$,
$$ \frac{\partial}{\partial y_1}\mathrm{Im} (f_{1}(\alpha_1; \eta_v) + 2\pi\alpha_1 - 4k_1\pi \sqrt{-1}\alpha_1)
< -2\pi + \eta_v < 0 \quad \text{if $k\geq 1$}$$
and
$$ \frac{\partial}{\partial y_1}\mathrm{Im} (f_{1}(\alpha_1; \eta_v) - 4k\pi \sqrt{-1}\alpha_1)
> 3\pi + \eta_v > 0  \quad \text{if $k\leq -1$}
$$
As a result, if $k\geq 1$, by pushing the contour upward in $y_1$ direction, we can make sure that the $\mathrm{Im} (f_{1}(\alpha_1; \eta_v) + 2\pi\alpha_1 - 4k_1\pi \sqrt{-1}\alpha_1)
 <2\Lambda(2\delta)$ on the top. By convexity, we know that $\mathrm{Im} (f_{1}(\alpha_1; \eta_v) + 2\pi\alpha_1 - 4k_1\pi \sqrt{-1}\alpha_1)
 <2\Lambda(2\delta)$ on the side. Similarly, if $k\leq -1$, by pushing the contour downward in $y_1$ direction, we can make sure that the $\mathrm{Im} (f_{1}(\alpha_1; \eta_v) + 2\pi\alpha_1 - 4k_1\pi \sqrt{-1}\alpha_1)
 <2\Lambda(2\delta)$ on the top. By convexity, we know that $\mathrm{Im} (f_{1}(\alpha_1; \eta_v) + 2\pi\alpha_1 - 4k_1\pi \sqrt{-1}\alpha_1)
<2\Lambda(2\delta)$ on the side. This proves the first estimation. The proof of the second estimation is similar.
\end{proof}

\begin{proposition}\label{OtherFest3'}
We have
\begin{align*}
\int_{C_{1,2}} g_1(\alpha_1) e^{\frac{n}{4\pi\sqrt{-1}} \Big(f_{1}(\alpha_1; \eta_v)
+ 2\pi  \alpha_1 + O(\frac{1}{n})\Big)} d\alpha_1 &= O\Big(e^{\frac{n}{2\pi}\Lambda(2\delta)}\Big),\\
\int_{C_{2,2}} g_2(\alpha_2) e^{\frac{n}{4\pi\sqrt{-1}} \Big(f_{2}(\alpha_2; \eta_v)+ 4\pi  \alpha_2 + O(\frac{1}{n})\Big)} d\alpha_2 &= O\Big(e^{\frac{n}{2\pi}\Lambda(2\delta)}\Big).
\end{align*}
\end{proposition}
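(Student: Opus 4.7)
The plan is to adapt the contour-deformation template used throughout Section \ref{EOFC}. For each of the two integrals, the strategy is to partition the real contour into sub-intervals, then on each sub-interval either bound the integrand directly via Lemma \ref{assumpvol} and convexity in $x_s$ (Lemma \ref{convexity}), or deform the contour vertically in the $y_s$-direction until the imaginary part of the exponent drops below $2\Lambda(2\delta)$, controlling the resulting vertical side contributions using the concavity/convexity in $y_s$ from Lemma \ref{convexity}.

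For the first integral on $C_{1,2}$, I would split $C_{1,2}=[\pi/2+\delta,\pi]\cup[\pi,3\pi/2-\delta]$, as in the first step of the proof of Proposition \ref{OtherFest3}. On $[\pi/2+\delta,\pi]$, since $\mathrm{Im}(2\pi\alpha_1)=0$ for real $\alpha_1$, convexity of $\mathrm{Im} f_1$ in $x_1$ combined with the endpoint values immediately yields $\mathrm{Im}(f_1+2\pi\alpha_1)\leq 2\Lambda(2\delta)$, giving the desired bound on this part. On $[\pi,3\pi/2-\delta]$, I would compute
\begin{equation*}
\partial_{y_1}\mathrm{Im}\bigl(f_1(\alpha_1;\eta_v)+2\pi\alpha_1\bigr)
=-2\mathrm{Arg}\bigl(1+e^{-2x_1\sqrt{-1}+2y_1}\bigr)-2x_1+\eta_v+2\pi,
\end{equation*}
and use that for $x_1\in[\pi,3\pi/2-\delta]$ and $\eta_v\in(-\pi,\pi)$, the leading term $-2x_1+\eta_v+2\pi$ has definite sign as $|y_1|\to\infty$, so that pushing the top of a rectangular contour $C^{\mathrm{top}}\cup C^{\mathrm{side}}$ (as constructed in Proposition \ref{OtherFest3}) sufficiently far in the appropriate vertical direction drives $\mathrm{Im}(f_1+2\pi\alpha_1)$ below $2\Lambda(2\delta)$; the side integrals are then controlled by the corresponding concavity/convexity of $\mathrm{Im}(f_1+2\pi\alpha_1)$ in $y_1$.

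The second integral on $C_{2,2}$ is handled by an essentially identical argument, now with the derivative
\begin{equation*}
\partial_{y_2}\mathrm{Im}\bigl(f_2(\alpha_2;\eta_v)+4\pi\alpha_2\bigr)
=-2\mathrm{Arg}\bigl(1+e^{-2x_2\sqrt{-1}+2y_2}\bigr)-2x_2-\eta_v+4\pi,
\end{equation*}
and splitting $C_{2,2}=[\pi/2+\delta,\pi]\cup[\pi,3\pi/2-\delta]$ analogously. The stronger bias $+4\pi$ (versus $+2\pi$) makes the sign analysis on $[\pi,3\pi/2-\delta]$ if anything easier, since the constant part of the derivative remains away from $0$ on a larger range; on $[\pi/2+\delta,\pi]$ we again use real-axis convexity and Lemma \ref{assumpvol}.

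The main obstacle is purely bookkeeping: for each sub-interval of each of the two contours, one must track precisely the sign of $\partial_{y_s}\mathrm{Im}(f_s+c\pi\alpha_s)$ as a function of $\eta_v\in(-\pi,\pi)$ and $x_s$ in the relevant sub-range, and verify that pushing the horizontal top of the rectangle to the chosen height $L$ (with $|L|$ large) keeps the integrand below the target bound, while also checking that $|g_s|$ remains at most polynomial in $|y_s|$ on these excursions so as not to spoil the exponential estimate. Given the explicit formulas for $g_1,g_2$ in Proposition \ref{Sigmasum} and the sign analyses already carried out in Propositions \ref{OtherFest2}, \ref{OtherFest3}, and \ref{OtherFest1'}, each of these verifications is a routine adaptation, and the claimed bound $O\bigl(e^{\frac{n}{2\pi}\Lambda(2\delta)}\bigr)$ follows.
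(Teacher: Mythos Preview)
Your proposal has a genuine gap, but it stems from a typo in the statement itself. The first estimate, as printed, is over $C_{1,2}$ with phase $f_1+2\pi\alpha_1$; however this is precisely the integral whose asymptotics were computed in Proposition~\ref{leading2}, where it was shown to be of order $e^{\frac{n}{4\pi}\mathrm{Im}f_1(\alpha_1(\eta_v);\eta_v)}$. By Lemma~\ref{assumpvol} this exceeds $e^{\frac{n}{2\pi}\Lambda(2\delta)}$, so the stated bound is simply false for $C_{1,2}$. The paper's own proof, if you look at it, is written entirely for $C_{1,1}$ (splitting into $C_{1,1}\cap[-\frac{\pi}{2},0]$ and $C_{1,1}\cap[0,\frac{\pi}{2}]$), which is the estimate actually needed in Proposition~\ref{asym4} when $p_1=1$: the leading term lives on $C_{1,2}$, and the $k=0$ Fourier coefficient on $C_{1,1}$ must be shown negligible. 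Similarly the second line should read $C_{2,1}$ with $+2\pi\alpha_2$ rather than $C_{2,2}$ with $+4\pi\alpha_2$ (the latter is already Proposition~\ref{OtherFest3}).

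Concretely, your contour-pushing step on $[\pi,\frac{3\pi}{2}-\delta]$ cannot work: you claim that $-2x_1+\eta_v+2\pi$ has a definite sign there, but for $x_1\in[\pi,\frac{3\pi}{2}-\delta]$ this quantity ranges over $[-\pi+2\delta+\eta_v,\eta_v]$, which straddles $0$ for generic $\eta_v\in(-\pi,\pi)$. Indeed $\alpha_1(\eta_v)+\pi$ lies in this strip and is a critical point of $f_1(\cdot;\eta_v)+2\pi\alpha_1$ with $\mathrm{Im}$-value equal to $\mathrm{Im}f_1(\alpha_1(\eta_v);\eta_v)>2\Lambda(2\delta)$, so no vertical deformation can bring the phase below the target. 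If you instead run your template on $C_{1,1}$ (split at $0$, bound $[-\frac{\pi}{2}+\delta,0]$ by convexity, and on $[0,\frac{\pi}{2}-\delta]$ use $\lim_{y_1\to-\infty}\partial_{y_1}\mathrm{Im}(f_1+2\pi\alpha_1)=-2x_1+2\pi+\eta_v>\pi+\eta_v>0$ to push downward), you recover exactly the paper's argument.
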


\begin{proof}
We first divide the contour $C_{1,1}$ into $C_{1,1} \cap [-\frac{\pi}{2}, 0]$ and $C_{1,1} \cap [0, \frac{\pi}{2}]$. On $C_{1,1} \cap [-\frac{\pi}{2}, 0]$, by convexity, we have
$$ \int_{C_{1,1} \cap [-\frac{\pi}{2}, 0]} g_1(\alpha_1) e^{\frac{n}{4\pi\sqrt{-1}} \Big(f_{1}(\alpha_1; \eta_v)+ 2\pi\alpha_1 + O(\frac{1}{n})\Big)} d\alpha_1 = O\Big(e^{\frac{n}{2\pi}\Lambda(2\delta)}\Big).$$
For $\alpha_1 = x+\sqrt{-1}y \in R_{1,1}$ with $x\in [0, \frac{\pi}{2}]$, we have $\pi < -2x + 2\pi < 2\pi$ and
\begin{align*}
\lim_{y_1 \to -\infty} \frac{\partial}{\partial y_1} \mathrm{Im} \Big( f_{1}(\alpha_1; \eta_v) +2\pi \alpha\Big)&=\lim_{y_1 \to -\infty} \Big[ -2\mathrm{Arg}(1+e^{-2x_1\sqrt{-1}+ 2y_1} ) - 2x_1 + 2\pi + \eta_v  \Big]\\
&= -2x_1 + 2\pi + \eta_v >\pi + \eta_v + \delta > 0.
\end{align*}
As a result, by pushing the contour downward in $y_1$ direction, we can make sure that the $\mathrm{Im} (f_{1}(\alpha_1; \eta_v) + 2\pi \alpha_1) <2\Lambda(2\delta)$ on the top. By convexity, we know that $\mathrm{Im} (f_{1}(\alpha_1; \eta_v)+ 2\pi \alpha_1) <2\Lambda(2\delta)$ on the side. This proves the first estimation. The proof of the second estimation is similar.
\end{proof}

\section{Asymptotics of the invariant}\label{AOI}
\begin{proposition}\label{asym4} We have the following asymptotic expansion formulas.
\begin{enumerate}
\item If $p_1=0$, then
$$ \Sigma_{1} = \sqrt{2n} \frac{g_1(\alpha_1(\eta_v);\eta_v)}{\sqrt{- f_{1}''(\alpha_1(\eta_v);\eta_v)}} e^{\frac{n}{4\pi \sqrt{-1}} f_{1}(\alpha_1(\eta_v);\eta_v)} \Big(1 + O\Big(\frac{1}{n}\Big) \Big)$$
\item If $p_1=1$, then
$$ \Sigma_{1} = \sqrt{2n} \Big(1 +  (\sqrt{-1})^n e^{\frac{A_1}{2}} \Big) \frac{g_1(\pi+\alpha_1(\eta_v);\eta_v)}{\sqrt{- f_{1}''(\pi+\alpha_1(\eta_v);\eta_v)}} e^{\frac{n}{4\pi \sqrt{-1}} f_{1}(\pi+\alpha_1(\eta_v);\eta_v)}  \Big( 1 + O \Big( \frac{1}{n} \Big) \Big)$$
\item If $p_2=0$, then
$$ \Sigma_{2} = \sqrt{2n} \frac{g_2(\alpha_2(\eta_v);\eta_v)}{\sqrt{- f_2''(\alpha_2(\eta_v);\eta_v)}} e^{\frac{n}{4\pi \sqrt{-1}} f_2(\alpha_2(\eta_v);\eta_v)} \Big(1 + O\Big(\frac{1}{n}\Big) \Big)$$
\item If $p_2=1$, then
$$ \Sigma_{2} = \sqrt{2n}  \Big(1 +  (\sqrt{-1})^n e^{\frac{A_2}{2}} \Big) \frac{g_2(\pi+\alpha_2(\eta_v);\eta_v)}{\sqrt{- f_2''(\pi+\alpha_2(\eta_v);\eta_v)}} e^{\frac{n}{4\pi \sqrt{-1}} f_2(\pi+\alpha_2(\eta_v);\eta_v)}  \Big( 1 + O \Big( \frac{1}{n} \Big) \Big)$$
\end{enumerate}
\end{proposition}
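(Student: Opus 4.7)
The plan is to assemble this proposition by combining the Fourier-series decomposition of $\Sigma_s = \Sigma_{s,1} + \Sigma_{s,2}$ from Propositions \ref{Sigmasum0} and \ref{Sigmasum} with the saddle-point formulas and error bounds from Sections \ref{ALFC} and \ref{EOFC}. The key observation is that the exponent defining $F_{s,t}(k)$ carries the linear phase $2(p_s - 2k)\pi\alpha_s$, whose coefficient varies over $\{\ldots,-4,-2,0,2,4,\ldots\}$ as $k$ ranges over $\ZZ$. Only the two values $0$ and $2\pi\alpha_s$ yield ``resonant'' exponents that admit a non-negligible saddle-point contribution, namely the two expressions treated by Propositions \ref{leading1} and \ref{leading2}. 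For each of the four cases, the strategy is to identify the unique resonant Fourier coefficient and bound every other term using the estimates from Section \ref{EOFC}.

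Concretely, for case (1), $p_1 = 0$: on $C_{1,1}$ the resonant index is $k_1 = 0$, giving the leading term via Proposition \ref{leading1}, while the remaining $k_1 \neq 0$ are absorbed by Proposition \ref{OtherFest1}. On $C_{1,2}$ no integer $k_1$ produces a resonant exponent (the would-be value is $k_1 = -1/2$), so every Fourier coefficient is controlled by Propositions \ref{OtherFest2} and \ref{OtherFest3}. Thus $\Sigma_1 = \frac{n}{2\pi} F_{1,1}(0) + \text{Error}$, and substituting Proposition \ref{leading1} yields formula (1). For case (2), $p_1 = 1$: the roles switch. On $C_{1,2}$ the resonant index $k_1 = 0$ gives the leading term via Proposition \ref{leading2}, with the other terms bounded by Proposition \ref{OtherFest1'}; on $C_{1,1}$ no integer $k_1$ is resonant, and the entire sum is bounded by Propositions \ref{OtherFest2'} and \ref{OtherFest3'}. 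Multiplying by the prefactor $(1 + (\sqrt{-1})^n e^{A_1/2})$ from $\Sigma_{1,2}$ yields formula (2). Cases (3) and (4) follow by the same argument with the index $1$ replaced by $2$ throughout.

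The remaining step is to verify that the accumulated error terms, each of magnitude $O\big(e^{n\Lambda(2\delta)/(2\pi)}\big)$, are dominated by the leading term of magnitude $\sqrt{n}\,e^{n\,\mathrm{Im}\,f_s(\alpha_s(\eta_v);\eta_v)/(4\pi)}$, so that they can be absorbed into the $(1 + O(1/n))$ factor. This is exactly the content of Lemma \ref{assumpvol}, which was designed to guarantee $2\Lambda(2\delta) < \mathrm{Im}\,f_s(\alpha_s(\eta_v);\eta_v)$. The main difficulty is not any individual estimate but the bookkeeping: confirming that the case analysis over $(t,k) \in \{1,2\}\times\ZZ$ is exhaustive, so that every Fourier coefficient is addressed by exactly one of the propositions from Sections \ref{ALFC} and \ref{EOFC}, with no gaps.
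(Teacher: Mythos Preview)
Your proposal is correct and follows essentially the same approach as the paper: the paper's own proof simply cites Propositions \ref{Sigmasum0}, \ref{Sigmasum}, \ref{leading1}, \ref{OtherFest1}, \ref{OtherFest2}, \ref{OtherFest3} for cases (1) and (3), and Propositions \ref{Sigmasum0}, \ref{Sigmasum}, \ref{leading2}, \ref{OtherFest1'}, \ref{OtherFest2'}, \ref{OtherFest3'} for cases (2) and (4). Your write-up spells out in more detail which value of $k$ in each Fourier series is handled by which estimate (in particular the observation that the ``resonant'' coefficient $2(p_s-2k)\pi$ hits $0$ or $2\pi$ for a unique integer $k$ depending on the parity $p_s$), but the logical structure and the ingredients invoked are identical.
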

\begin{proof}
(1) \& (3) follow by Proposition \ref{Sigmasum0}, \ref{Sigmasum}, \ref{leading1}, \ref{OtherFest1}, \ref{OtherFest2}, \ref{OtherFest3}. (2) \& (4) follow by Proposition \ref{Sigmasum0}, \ref{Sigmasum}, \ref{leading2}, \ref{OtherFest1'}, \ref{OtherFest2'}, \ref{OtherFest3'}.
\end{proof}

\begin{proposition}\label{lastpf} Write $\theta_v^{(n)} = \frac{4\pi m_v}{n}$. We have
\begin{align*}
| \mathrm{Trace}\text{ }( \Lambda_{\varphi, r, p_v}^q ) | = C(n)  \frac{e^{\frac{n}{4\pi} \mathrm{Vol}\big(M_{\varphi, \theta_v^{(n)}}\big)}}{\sqrt{ \pm\mathrm{Tor}(M_\varphi, \mu_v,  \rho_{\theta_v^{(n)}})}} \Big(1+O\Big(\frac{1}{n}\Big)\Big),
\end{align*}
where
$$C(n)
= \frac{|C_0(n) |}{\sqrt{2}\Big|D^q\Big(qe^{-\frac{A_1}{n}}\Big)\Big|^{\frac{1}{n}}\Big|D^q\Big(qe^{-\frac{A_2}{n}}\Big)\Big|^{\frac{1}{n}}}
$$ and
\begin{align*}
&C_0(n) \\
&=
\begin{cases}
e^{-\big(\frac{\hat{l}_0 \sqrt{-1}}{2} + \frac{V_1}{2\pi}\big) \alpha_1(\eta_v)
-\big(\frac{\hat{m}_0 \sqrt{-1}}{2} + \frac{V_2}{2\pi}\big) \alpha_2(\eta_v)} & \\
\times \Big(1+e^{-2\alpha_1(\eta_v)\sqrt{-1}} \Big)^{\frac{A_1}{4\pi \sqrt{-1}}}
\Big(1+e^{-2\alpha_2(\eta_v)\sqrt{-1}} \Big)^{\frac{A_2}{4\pi \sqrt{-1}}}  & \text{if $(p_1,p_2)=(0,0)$,}\\\\
e^{-\big(\frac{\hat{l}_0 \sqrt{-1}}{2} + \frac{V_1}{2\pi}\big) \alpha_1(\eta_v)
-\big(\frac{\hat{m}_0 \sqrt{-1}}{2} + \frac{V_2}{2\pi}\big)  (\alpha_2(\eta_v) + \pi)} & \\
\times \Big(1+e^{-2\alpha_1(\eta_v)\sqrt{-1}} \Big)^{\frac{A_1}{4\pi \sqrt{-1}}}
\Big(1+e^{-2\alpha_2(\eta_v)\sqrt{-1}} \Big)^{\frac{A_2}{4\pi \sqrt{-1}}} \Big(1 +  (\sqrt{-1})^n e^{\frac{A_2}{2}} \Big) & \text{if $(p_1,p_2)=(0,1)$,}\\\\
e^{-\big(\frac{\hat{l}_0 \sqrt{-1}}{2} + \frac{V_1}{2\pi}\big) (\alpha_1(\eta_v)+\pi)
-\big(\frac{\hat{m}_0 \sqrt{-1}}{2} + \frac{V_2}{2\pi}\big)  \alpha_2(\eta_v) } & \\
\times \Big(1+e^{-2\alpha_1(\eta_v)\sqrt{-1}} \Big)^{\frac{A_1}{4\pi \sqrt{-1}}}
\Big(1+e^{-2\alpha_2(\eta_v)\sqrt{-1}} \Big)^{\frac{A_2}{4\pi \sqrt{-1}}} \Big(1 +  (\sqrt{-1})^n e^{\frac{A_1}{2}} \Big) & \text{if $(p_1,p_2)=(1,0)$,}\\\\
e^{-\big(\frac{\hat{l}_0 \sqrt{-1}}{2} + \frac{V_1}{2\pi}\big) (\alpha_1(\eta_v)+\pi)
-\big(\frac{\hat{m}_0 \sqrt{-1}}{2} + \frac{V_2}{2\pi}\big)  (\alpha_2(\eta_v)+\pi) } & \\
\times \Big(1+e^{-2\alpha_1(\eta_v)\sqrt{-1}} \Big)^{\frac{A_1}{4\pi \sqrt{-1}}}
\Big(1+e^{-2\alpha_2(\eta_v)\sqrt{-1}} \Big)^{\frac{A_2}{4\pi \sqrt{-1}}} &\\
\times \Big(1 +  (\sqrt{-1})^n e^{\frac{A_1}{2}} \Big)\Big(1 +  (\sqrt{-1})^n e^{\frac{A_2}{2}} \Big) & \text{if $(p_1,p_2)=(1,1)$.}\\\\
\end{cases}
\end{align*}
Moreover, there exists $A,B>0$ such that $A<C(n)<B$.
\end{proposition}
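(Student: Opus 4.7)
The plan is to combine the explicit formula (\ref{Expfor}) with the case-by-case asymptotic expansion of $\Sigma_1,\Sigma_2$ given in Proposition \ref{asym4}, take absolute values, and recognize the resulting expressions using Lemma \ref{imgivevol} for the volume and Proposition \ref{Hess} for the torsion. The four expressions for $C_0(n)$ correspond to the four choices of $(p_1,p_2)\in\{0,1\}^2$, with the $\pi$-shifted critical points (and their associated $(1+(\sqrt{-1})^n e^{A_i/2})$ prefactors) accounting for Cases (2)--(4).

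Starting from $|\mathrm{Trace}(\Lambda^q_{\varphi,r,p_v})| = \frac{|\Sigma_1|\,|\Sigma_2|}{n\,|D^q(qe^{-A_1/n})|^{1/n}\,|D^q(qe^{-A_2/n})|^{1/n}}$, I would substitute $|\Sigma_1|$ and $|\Sigma_2|$ from the appropriate case of Proposition \ref{asym4}. The product $\sqrt{2n}\cdot\sqrt{2n}=2n$ cancels with the $n$ in the denominator. The exponential factor contributes
$$\left|e^{\frac{n}{4\pi\sqrt{-1}}(f_1(c_1;\eta_v)+f_2(c_2;\eta_v))}\right| = e^{\frac{n}{4\pi}\mathrm{Im}(f_1(c_1;\eta_v)+f_2(c_2;\eta_v))},$$
where $c_i\in\{\alpha_i(\eta_v),\,\pi+\alpha_i(\eta_v)\}$ according to $p_i$. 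For unshifted arguments, Lemma \ref{imgivevol} together with the identity $\theta_v^{(n)}=4\pi m_v/n=2\eta_v$ identifies the imaginary part with $\mathrm{Vol}(M_{\varphi,\theta_v^{(n)}})$. For shifted arguments, the direct functional identity $f_i(\pi+\alpha;\eta_v) = f_i(\alpha;\eta_v) - \pi^2 - 2\pi\alpha + \pi\eta_v$ (mirroring the computation in the proof of Proposition \ref{leading2}) shows that the shift produces only real or bounded-oscillating contributions which can be absorbed into $C_0(n)$, so the leading volume term is preserved. The Hessian-denominator factor, using Proposition \ref{Hess}, evaluates to $|\sqrt{-f_1''(\alpha_1(\eta_v))}\sqrt{-f_2''(\alpha_2(\eta_v))}| = 2\sqrt{|\mathrm{Tor}(M_\varphi,\mu_v,\rho_{\theta_v^{(n)}})|}$, yielding the $\sqrt{\pm\mathrm{Tor}}$ in the denominator together with the arithmetic factor of $\sqrt{2}$ appearing in $C(n)$. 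The remaining $g_i$-values and case-dependent $(1+(\sqrt{-1})^n e^{A_i/2})$ prefactors then assemble into exactly the four stated formulas for $C_0(n)$.

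For the boundedness statement on $C(n)$, the quantities $g_i(\alpha_i(\eta_v))$ and $g_i(\pi+\alpha_i(\eta_v))$ are fixed nonzero holomorphic values (depending continuously on $\eta_v$ which converges to $\theta_v/2\in(-\pi,\pi)$, at which $\alpha_i$ is well-defined by Lemma \ref{geotricone}). By Proposition \ref{asymDqu}, $|D^q(qe^{-A_i/n})|^{1/n}$ converges to an explicit nonzero positive number, so is bounded above and below for large $n$. The prefactors $(1+(\sqrt{-1})^n e^{A_i/2})$ take only four possible values as $n\bmod 4$ cycles; under the standing genericity assumption on $r$ (which excludes $(\sqrt{-1})^n e^{A_i/2}=-1$ for any $n$), each of these four values is nonzero, giving a uniform positive lower bound. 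Combining yields constants $0<A<C(n)<B<\infty$.

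The main obstacle is the careful bookkeeping across the four $(p_1,p_2)$ cases: tracking how the $\pi$-shift of the saddle points propagates through $g_i$, through the exponential factor, and through the prefactors, and verifying in each case that the imaginary part of the exponent cleanly reduces to $\mathrm{Vol}(M_{\varphi,\theta_v^{(n)}})$, with all remaining (real or phase-only) discrepancies being absorbed into $C_0(n)$, which need only be uniformly bounded rather than explicitly simplified further.
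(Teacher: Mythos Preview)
Your approach matches the paper's: combine (\ref{Expfor}) with Proposition \ref{asym4}, identify the exponential growth via Lemma \ref{imgivevol} and the Hessian via Proposition \ref{Hess}, and bound the normalization via Proposition \ref{asymDqu}. Two points need sharpening.

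First, in the $p_i=1$ cases, your functional identity $f_i(\pi+\alpha;\eta_v) = f_i(\alpha;\eta_v) - \pi^2 - 2\pi\alpha + \pi\eta_v$ does \emph{not} by itself give only a real shift: at $\alpha=\alpha_i(\eta_v)$ the term $-2\pi\alpha_i(\eta_v)$ has nonzero imaginary part whenever $\eta_v\neq 0$, so $\mathrm{Im}\,f_i(\pi+\alpha_i(\eta_v))\neq\mathrm{Im}\,f_i(\alpha_i(\eta_v))$ and the discrepancy contributes a factor $e^{-\frac{n}{2}\mathrm{Im}\,\alpha_i(\eta_v)}$ to the modulus, which cannot be absorbed into a bounded $C_0(n)$. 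The correct observation (made in the proof of Proposition \ref{leading2} and used tacitly here) is that the relevant exponent in the $p_i=1$ case is the \emph{full} critical value of $f_i(\alpha;\eta_v)+2\pi\alpha$, namely $f_i(\pi+\alpha_i(\eta_v))+2\pi(\pi+\alpha_i(\eta_v)) = f_i(\alpha_i(\eta_v)) + \pi^2 + \pi\eta_v$, which differs from $f_i(\alpha_i(\eta_v))$ by a real number and hence has the same imaginary part. The paper also records $f_i''(\pi+\alpha)=f_i''(\alpha)$, so the Hessian factor is unchanged under the shift.

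Second, your appeal to ``the standing genericity assumption'' for the nonvanishing of $1+(\sqrt{-1})^n e^{A_i/2}$ is not the mechanism the paper uses, and it is not clear that smoothness of the character alone rules this out. The paper's argument is algebraic: if this factor vanished then $a_i=e^{A_i}=-1$; since $a_1=b_0^{-1}$ and $a_2=c_1^{-1}$ this forces $b_0=-1$ or $c_1=-1$, whence by (\ref{period1})--(\ref{period2}) one obtains $b_1=(1+b_0)^2 a_0=0$ or $b_2=(1+c_1)^2 b_1=0$, contradicting the standing hypothesis that the periodic edge weights lie in $(\CC^*)^3$.
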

\begin{proof}
Note that $f_{1}''(\pi+\alpha_1(\eta_v);\eta_v)=f_{1}''(\alpha_1(\eta_v);\eta_v)$ and $f_{2}''(\pi+\alpha_1(\eta_v);\eta_v)=f_{2}''(\alpha_1(\eta_v);\eta_v)$.
The asymptotic expansion formula follows from (\ref{Expfor}), Proposition \ref{Sigmasum0}, \ref{imgivevol}, \ref{Hess} and \ref{asym4}. 

To see the bound for $C_n$, by Lemma \ref{geotricone}, since $\mathrm{Im}(z_i(\eta_v))>0$, we have
$$ 1+e^{-2\alpha_i(\eta_v)\sqrt{-1}}  = 1-z_i (\eta_v)\neq 0$$
for $i=1,2$. Besides, if $1 +  (\sqrt{-1})^n e^{\frac{A_i}{2}} =0$ for some $i=1,2$, then we have $a_i=e^{A_i} = -1$, which contradicts with (\ref{period1}) and (\ref{period2}) that $b_1,b_2 \neq 0$. Together with Proposition \ref{asymDqu}, there exists $A, B>0$ such that 
$A<C(n)<B$.
\end{proof}

\noindent
Tushar Pandey\\
Department of Mathematics\\  Texas A\&M University\\
College Station, TX 77843, USA\\
(tusharp@tamu.edu)\\

\noindent
Ka Ho Wong\\
Department of Mathematics\\  Yale University\\
 New Haven, CT 06511, USA\\
(kaho.wong@yale.edu)
\\

\end{document}